\documentclass[a4paper, 12pt]{amsart}
\usepackage{amsmath,amssymb,amsthm}

\usepackage{url} 
\usepackage{amscd}
\setlength{\oddsidemargin}{30mm}
\addtolength{\oddsidemargin}{-1in}
\setlength{\evensidemargin}{30mm}
\addtolength{\evensidemargin}{-1in}
\setlength{\textwidth}{150mm}
\usepackage{array,enumerate}
\newcommand{\ip}[1]{\mathopen{\langle}#1\mathclose{\rangle}}
\newtheorem{Thm}{Theorem}[section]
\newtheorem{Prop}[Thm]{Proposition}
\newtheorem{Lem}[Thm]{Lemma}

\newtheorem{Cor}[Thm]{Corollary}
\newtheorem{Thmint}{Theorem}[section]

\theoremstyle{definition}
\newtheorem{Rem}[Thm]{Remark}

\newtheorem{Def}[Thm]{Definition}

\newcommand{\0}{\{0\}}
\newcommand{\Cs}{C$^\ast$}

\newcommand{\id}{\mbox{\rm id}}

\newcommand{\rc}{\mathop{\rtimes _{\mathrm r}}}

\newcommand{\vnc}[1]{\mathop{{\bar \rtimes}_{#1}}}

\newcommand{\rca}[1]{\mathop{\rtimes _{{\mathrm r}, #1}}}

\newcommand{\Int}[2]{{\rm Int}(#1 \subset #2)}
\newcommand{\E}{{\mathbf E}}

\newcommand{\IB}{\mathbb B}
\newcommand{\IC}{\mathbb C}

\newcommand{\IK}{\mathbb K}
\newcommand{\IM}{\mathbb M}
\newcommand{\IN}{\mathbb N}
\newcommand{\IR}{\mathbb R}
\newcommand{\IT}{\mathbb T}

\newcommand{\IZ}{\mathbb Z}

\newcommand{\fx}{\mathfrak x}

\newcommand{\n}{\mathfrak n}
\newcommand{\fU}{\mathfrak U}

\newcommand{\fu}{\mathfrak u}
\newcommand{\fv}{\mathfrak v}
\newcommand{\fw}{\mathfrak w}

\newcommand{\fH}{\mathfrak H}

\newcommand{\cL}{\mathcal L}

\newcommand{\cV}{\mathcal V}
\newcommand{\cM}{\mathcal M}

\newcommand{\btimes}{\mathbin{\bar{\otimes}}}

\newcommand{\acts}{\curvearrowright}

\newcommand{\ad}{\mathrm{Ad}}
\DeclareMathOperator{\Aut}{\mathop{Aut}}

\DeclareMathOperator{\spa}{\mathop{span}}

\DeclareMathOperator{\PSL}{PSL}
\DeclareMathOperator{\SL}{SL}

\DeclareMathOperator{\Cso}{{\rm C}^\ast}

\setcounter{tocdepth}{1}
\title[]{Crossed product splitting of intermediate operator algebras via $2$-cocycles}

\author{Yuhei Suzuki}
\subjclass[2020]{Primary~
46L55, Secondary~46L35}
\keywords{Intermediate operator algebras, crossed product operator algebras, cocycle actions}
\address{Department of Mathematics, Faculty of Science, Hokkaido University,
Kita 10, Nishi 8, Kita-Ku, Sapporo, Hokkaido, 060-0810, Japan}

\email{yuhei@math.sci.hokudai.ac.jp}
\begin{document}
\maketitle

\begin{abstract}
We investigate the \Cs-algebra inclusions $B \subset A \rc \Gamma$
arising from inclusions $B \subset A$ of $\Gamma$-\Cs-algebras.
 The main result shows that, when $B \subset A$ is \Cs-irreducible in the sense of R{\o}rdam,
and is centrally $\Gamma$-free in the sense of the author, then after tensoring with the Cuntz algebra $\mathcal{O}_2$,
all intermediate \Cs-algebras $B \subset C\subset A \rc \Gamma$ enjoy
a natural crossed product splitting
\[\mathcal{O}_2\otimes C=(\mathcal{O}_2 \otimes D) \rca{\gamma, \fw} \Lambda\]
for $D:= C \cap A$, some $\Lambda<\Gamma$, and a subsystem $(\gamma, \fw)$ of a unitary perturbed cocycle action $\Lambda \acts \mathcal{O}_2\otimes A$. 
As an application, we give a new Galois's type theorem for the Bisch--Haagerup type inclusions
\[A^K \subset A\rc \Gamma\]
for actions of compact-by-discrete groups $K \rtimes \Gamma$ on simple \Cs-algebras.

Due to a K-theoretical obstruction, the operation $\mathcal{O}_2\otimes -$ is necessary to obtain the clean splitting.
Also, in general $2$-cocycles $\fw$ appearing in the splitting cannot be removed even further tensoring with any unital (cocycle) action.
We show them by examples, which further show that $\mathcal{O}_2$ is a minimal possible choice.

We also establish a von Neumann algebra analogue, where $\mathcal{O}_2$ is replaced by the type I factor $\IB(\ell^2(\IN))$.
\end{abstract}

\tableofcontents

\section{Introduction}
Inclusions of mathematical objects play significant roles in many contexts.
For instance, Galois theory deduces the solvability problem of a polynomial to a 
problem on the symmetry of the associated inclusion of fields.
In finite group theory, the study of certain subgroups play crucial roles:
Sylow's theorem is definitely a basic tool in the study of finite groups,
and the study of subgroups is also crucial in classification theory of finite simple groups.

Operator algebra theory is no exception to this general philosophy.
Studies of subalgebras (e.g., maximal amenable subalgebras \cite{Pop83}) and embeddings (e.g., injective enveloping \Cs-algebras \cite{Ham85}, constructions and classification of embeddings)
are of great interest and deeply studied by a wide range of operator algebraists,
and some of them become key players of major breakthroughs.
Highlights include Jones's subfactor theory \cite{Jon},
Popa's deformation/rigidity theory \cite{PopICM},
accessible characterizations on \Cs-simplicity by Kalantar--Kennedy \cite{KK} and their striking consequences \cite{BKKO},
the non-commutative Nevos--Zimmer theorem of Boutonnet--Houdayer \cite{BH23}, \cite{HouICM},
and an abstract approach to the classification theorem of unital simple separable nuclear \Cs-algebras \cite{CGS}.

An important invariant for an inclusion $B \subset A$ of operator algebras
is the \emph{lattice of intermediate operator algebras} $\Int{B}{A}$.
However this invariant is usually extremely difficult to calculate.
A reason of the difficulty comes from the construction of operator algebras.
When one constructs an operator algebra from another mathematical object (e.g., groups, dynamical systems),
the construction involves the \emph{completion}.
(This is also a reason why finite dimensional approximation properties are important in operator algebra theory, see the book \cite{BO}.)
As compensation for being able to use the operator algebra framework,
the resulting algebra contains transcendental elements.
As a result, the lattice becomes extremely hard to access.

However in certain situations, one can show that
there are only \emph{canonical} (``algebraic'') intermediate operator algebras via highly non-trivial arguments.
The purpose of this article is to give a new result in this direction.
Before stating our main results, let us briefly review backgrounds on classification results of intermediate operator algebras.

Basically we have three previously known general theorems giving a complete description of intermediate operator algebras.
For simplicity of the statements, here we state them in the von Neumann algebra context.

\noindent
(1) {\bf Tensor Splitting Theorem (Ge--Kadison \cite{GK}):} For a factor $M$ and any von Neumann algebra inclusion $B \subset A$, 
the map
\[M\btimes - \colon {\rm Int}(B\subset A) \rightarrow {\rm Int}(M \btimes B \subset M \btimes A); \quad C \mapsto M \btimes C\] is a lattice isomorphism. The C$^\ast$-algebra version is proved by Zacharias \cite{Zac} and Zsido \cite{Zsi} (see also \cite{SuzMAAN}, Theorem A.3 for the non-unital case).\\
(2-1) {\bf Galois Correspondence Theorem for compact groups (Izumi--Longo--Popa \cite{ILP}):} Consider any faithful compact group action $K \curvearrowright M$ on a factor 
with $(M^K)' \cap M=\mathbb{C}$. Here $M^K$ denotes the fixed point algebra.
Then the map
\[{\rm Sub}(K) \rightarrow {\rm Int}(M^K\subset M);\quad L\mapsto M^L\] gives an anti-lattice isomorphism.
The C$^\ast$-algebra version is recently proved by Mukohara \cite{Mu} for isometrically shift-absorbing actions (see \cite{GS2}).\\
(2-2) {\bf Galois Correspondence Theorem for discrete groups (Izumi--Longo--Popa \cite{ILP}):}
For any pointwise outer action $\alpha$
of a discrete group $\Gamma$ on a factor $M$,
the map
\[{\rm Sub}(\Gamma) \rightarrow \Int{M}{M\vnc{\alpha}\Gamma};\quad \Lambda \mapsto M\vnc{\alpha} \Lambda\]
is a lattice isomorphism.
The C$^\ast$-algebra version is proved by Cameron--Smith \cite{CS}.\\
(3) {\bf Crossed Product Splitting Theorem (Suzuki \cite{SuzCMP}):}
For a centrally $\Gamma$-free inclusion of $\Gamma$-von Neumann algebras $N\subset (M, \alpha)$, the map
\[\Int{N}{M}^\Gamma \rightarrow \Int{N\vnc{\alpha}\Gamma}{M\vnc{\alpha}\Gamma};\quad C\mapsto C \vnc{\alpha}\Gamma\]
gives a lattice isomorphism. Here $\Int{N}{M}^\Gamma$ denotes the fixed point set.
(See also \cite{Pac} for classical results for trace preserving actions on finite von Neumann algebras, and \cite{AGG}, \cite{CS}, \cite{Och} etc.~ for some partial generalizations.)
The \Cs-algebra version is also shown in \cite{SuzCMP}.

Recently Bader--Boutonnet--Houdayer \cite{BBH} proved a version of the Margulis factor theorem
on the crossed product algebra level, in which the Crossed Product Splitting Theorem played a crucial role.
It follows that the inclusions
\[L(\PSL(n, \IZ)) \subset L^\infty(\PSL(n, \IR)/P_n)\bar{\rtimes} \PSL(n, \IZ);\quad n=3, 4, \ldots\]
are mutually non-isomorphic.
Thus the longstanding classification problem of the factors $L(\PSL(n, \IZ))$; $n=3, 4, \ldots$
could be deduced to the reconstruction of the above inclusions
from their intrinsic structures.

In this article, we propose a new complete description result of intermediate operator algebra lattices,
which unifies Theorems (2-2) and (3).

For the terminologies and notations used below, see Section \ref{section:Prelim}.
\begin{Thmint}\label{Thmint:Cs}
Let $\Gamma$ be a discrete group with Haagerup--Kraus's approximation property $($AP$)$.
Let $B\subset (A, \alpha)$ be a \Cs-irreducible, centrally $\Gamma$-free inclusion of unital $\Gamma$-\Cs-algebras.
Then for any $C\in \Int{B}{A\rca{\alpha} \Gamma}$,
there is a subgroup $\Lambda<\Gamma$ and an $(\mathcal{O}_2\otimes C, \Lambda)$-respecting map $\fu \colon \Lambda \rightarrow (\mathcal{O}_2\otimes A)^{\rm u}$ with
\[\mathcal{O}_2\otimes C =(\mathcal{O}_2\otimes (C \cap A)) \rca{{}^\fu \alpha, {\rm d}\fu} \Lambda.\]
\end{Thmint}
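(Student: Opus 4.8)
The plan is to reconstruct $\mathcal{O}_2\otimes C$ from its Fourier data, using the AP to make harmonic analysis on the crossed product rigorous and using $\mathcal{O}_2$ to convert full coefficient bimodules into honest unitaries; this follows the template of the Crossed Product Splitting Theorem but upgraded to the intermediate-algebra setting of the discrete Galois correspondence. Throughout I would work inside $\mathcal{O}_2\otimes(A\rca{\alpha}\Gamma)\cong(\mathcal{O}_2\otimes A)\rca{\id\otimes\alpha}\Gamma$, with the canonical conditional expectation $\E$ onto $\mathcal{O}_2\otimes A$ and Fourier coefficients $\E_g(x):=\E\bigl(x\,(1\otimes\lambda_g^\ast)\bigr)\in\mathcal{O}_2\otimes A$. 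Since $\Gamma$ has the AP, there is a net of finitely (group-)supported completely bounded Herz--Schur multipliers converging to the identity with uniformly bounded norms, so that any $x$ is the limit of its finitely supported Fourier truncations and, in particular, is determined by the family $\{\E_g(x)\}_g$.

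First I would establish that $\mathcal{O}_2\otimes C$ is \emph{Fourier-closed}, that is, $\E_g(x)(1\otimes\lambda_g)\in\mathcal{O}_2\otimes C$ for every $x\in\mathcal{O}_2\otimes C$ and $g\in\Gamma$. This structural reduction is where the AP and central $\Gamma$-freeness combine: the AP reconstructs elements from their coefficients, while central freeness controls the intertwiners and rules out the cancellations that would otherwise obstruct isolating a single Fourier mode; I would establish it by adapting the averaging arguments underlying the Crossed Product Splitting Theorem. Granting this, for each $g$ set
\[
M_g:=\overline{\{\E_g(x):x\in\mathcal{O}_2\otimes C\}},
\]
a closed $(\mathcal{O}_2\otimes D)$-sub-bimodule of $\mathcal{O}_2\otimes A$, and define $\Lambda:=\{g\in\Gamma:M_g\neq 0\}$.

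Next I would verify that $\Lambda$ is a subgroup and build the unitaries. The \Cs-irreducibility of $B\subseteq A$ forces the dichotomy that each nonzero $M_g$ is full, and central $\Gamma$-freeness supplies the non-cancellation needed so that $g,h\in\Lambda$ implies $gh\in\Lambda$ (while $e\in\Lambda$ because $1\in M_e=\mathcal{O}_2\otimes D$, and $g\in\Lambda\Rightarrow g^{-1}\in\Lambda$ by passing to adjoints). For each $g\in\Lambda$ I would then promote the full bimodule $M_g$ to a unitary: a full element of $M_g$ yields, after passing to $\mathcal{O}_2\otimes A$ where $K$-theory vanishes and $\mathcal{O}_2$-absorption is available, a unitary $\fu(g)\in(\mathcal{O}_2\otimes A)^{\rm u}$ with $\fu(g)(1\otimes\lambda_g)\in\mathcal{O}_2\otimes C$; normalizing $\fu(e):=1$ produces the desired $(\mathcal{O}_2\otimes C,\Lambda)$-respecting map. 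Its perturbed action ${}^\fu\alpha_g:=\ad(\fu(g))\circ(\id\otimes\alpha_g)$ then preserves $\mathcal{O}_2\otimes D$, and the coboundary-type defect ${\rm d}\fu(g,h):=\fu(g)\,(\id\otimes\alpha_g)(\fu(h))\,\fu(gh)^\ast$ is the $2$-cocycle in the statement. \textbf{I expect this unitary construction to be the main obstacle}, since it is exactly the step where \Cs-irreducibility, central $\Gamma$-freeness, and the presence of $\mathcal{O}_2$ must be orchestrated simultaneously, and it is the step that explains, a posteriori, why $\mathcal{O}_2$ cannot be removed.

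Finally I would identify $\mathcal{O}_2\otimes C$ with $(\mathcal{O}_2\otimes D)\rca{{}^\fu\alpha,{\rm d}\fu}\Lambda$. The inclusion $\supseteq$ is immediate, since $\mathcal{O}_2\otimes D\subseteq\mathcal{O}_2\otimes C$ and each generator $\fu(g)(1\otimes\lambda_g)$ lies in $\mathcal{O}_2\otimes C$ by construction. For $\subseteq$, take $x\in\mathcal{O}_2\otimes C$; by the AP it is approximated by finitely supported Fourier truncations, each supported in $\Lambda$ by Fourier-closedness and the definition of $\Lambda$, and for $g\in\Lambda$ the $g$-term factors as
\[
\E_g(x)(1\otimes\lambda_g)=\bigl(\E_g(x)\fu(g)^\ast\bigr)\,\fu(g)(1\otimes\lambda_g),
\]
so passing to the limit places $x$ in the cocycle crossed product. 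The remaining subtlety, again resolved by central $\Gamma$-freeness, is to confirm that $\E_g(x)\fu(g)^\ast$ genuinely lands in $\mathcal{O}_2\otimes D=\mathcal{O}_2\otimes(C\cap A)$ rather than merely in $\mathcal{O}_2\otimes A$; this is precisely the assertion that the full bimodule $M_g$ is the single left orbit $(\mathcal{O}_2\otimes D)\fu(g)$, which completes the identification.
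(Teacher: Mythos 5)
Your proposal is correct and follows essentially the same route as the paper's proof: your Fourier-closedness step is the paper's Lemma 3.1 (the central-freeness averaging argument), your AP-based reconstruction from coefficients is its Lemma 2.2, and the unitary-extraction step you flag as the main obstacle is precisely its Lemma 3.3, where a nonzero coefficient $\E_s(c)$ is promoted to a unitary in $\E_s(C)\cap A^{\rm u}$ by a short computation in the purely infinite simple algebra $C\cap A$ with vanishing ${\rm K}_0$ (using Cuntz's results to invert $\E_s(c)\E_s(c)^\ast$ and to realize the resulting projection by an isometry in $\alpha_s(C\cap A)$), after which $\E_s(C)=(C\cap A)\cdot u$ follows at once since $xs, us\in C$ forces $xu^\ast\in C\cap A$. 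In particular, the ingredients you list for that step (simplicity of $C\cap A$ from \Cs-irreducibility, pure infiniteness and ${\rm K}_0=0$ from the $\mathcal{O}_2$ tensor factor) are exactly the ones the paper orchestrates, so your outline matches the published argument step for step.
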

The theorem states that, under suitable assumptions on a $\Gamma$-\Cs-algebra inclusion $B\subset (A, \alpha)$,
after tensoring with the Cuntz algebra $\mathcal{O}_2$ (equipped with the trivial action),
\emph{all} intermediate \Cs-algebras of the inclusion $B \subset A \rca{\alpha}\Gamma$ split into a twisted crossed product in a canonical way.
As many deep structural results are available for the (twisted) crossed products (see the books \cite{Pedbook}, \cite{BO}, \cite{Wil}, \cite{AP} for instance)
and the associated (unique) cocycle actions are given on the purely algebraic level,
the result is helpful to understand the inclusions of the above form.
The proof is given in Section \ref{section:proof}.
We extend Theorem \ref{Thmint:Cs} to twisted and/or non-unital case in Section \ref{section:general} (see Theorem \ref{Thm:generalCs}).

We note that both the $2$-cocycle $\fw$ and $\mathcal{O}_2$ appearing in the splitting
cannot be removed. We prove them by examples in Sections \ref{section:nonexact} and \ref{section:O2} respectively.
We believe that the two results below are of independent interest.
The next result is an evidence that the $2$-cocycles cannot be removed in our theorem,
even further tensoring with any unital cocycle actions.
This is an application of the recent developments on amenable actions on simple \Cs-algebras \cite{Suzeq}, \cite{OS}.
\begin{Thmint}
Let $\Gamma$ be a non-exact countable group. Then there is a cocycle action
$(\alpha, \fw) \colon \Gamma \acts \mathcal{O}_2$ with the following property.
For any cocycle action $(\beta, \fx) \colon \Gamma \acts A$ on a unital \Cs-algebra,
the diagonal cocycle action $(\alpha\otimes \beta, \fw\otimes \fx)$ is not exterior equivalent to a genuine action.
\end{Thmint}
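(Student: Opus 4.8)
The plan is to take $(\alpha, \fw)$ to be an \emph{amenable} cocycle action on $\mathcal{O}_2$, and to obstruct the presence of any genuine action in the exterior equivalence class of the diagonal by showing that such an action would itself be amenable, forcing $\Gamma$ to be exact. The existence of an amenable cocycle action $(\alpha, \fw)\colon \Gamma \acts \mathcal{O}_2$ for an arbitrary countable group is precisely the content of \cite{Suzeq}, \cite{OS}, and this is the only external input I would invoke.

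The conceptual core is the observation that amenability of a cocycle action is really a property of the induced \emph{genuine} action on the center of the bidual. First I would note that, since $\ad(\fw(g,h))$ is inner and hence acts trivially on the center $Z(A\sd)$, the relation $\alpha_g\alpha_h=\ad(\fw(g,h))\,\alpha_{gh}$ shows that $g\mapsto \alpha_g|_{Z(A\sd)}$ is a genuine action $\tilde\alpha$ of $\Gamma$; following the Anantharaman--Delaroche framework underlying \cite{Suzeq}, \cite{OS}, amenability of $(\alpha,\fw)$ is by definition the amenability of $\tilde\alpha$. Two stability properties then fall out essentially formally. (i) If $(\alpha,\fw)$ and a genuine action $\gamma$ are exterior equivalent, the perturbing unitaries act trivially on the center, so the two systems induce the \emph{same} action on $Z(A\sd)$; hence $\gamma$ is amenable exactly when $(\alpha,\fw)$ is. (ii) Amenability passes to diagonal tensor products: for $z\in Z(\mathcal{O}_2\sd)$ the element $z\otimes 1$ commutes with $\mathcal{O}_2\otimes A$, hence (by normality) lies in $Z((\mathcal{O}_2\otimes A)\sd)$, and $z\mapsto z\otimes 1$ is an equivariant unital $*$-homomorphism $Z(\mathcal{O}_2\sd)\to Z((\mathcal{O}_2\otimes A)\sd)$ intertwining $\tilde\alpha$ with the center action of $(\alpha\otimes\beta,\fw\otimes\fx)$. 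Along such an equivariant unital map the approximately invariant positive-type nets witnessing amenability push forward, so $(\alpha\otimes\beta,\fw\otimes\fx)$ is amenable.

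With these in hand the theorem is immediate. Fixing an amenable $(\alpha,\fw)$ on $\mathcal{O}_2$, item (ii) shows that for \emph{any} unital $(\beta,\fx)\colon\Gamma\acts A$ the diagonal cocycle action $(\alpha\otimes\beta,\fw\otimes\fx)$ is amenable. If it were exterior equivalent to a genuine action $\gamma\colon\Gamma\acts\mathcal{O}_2\otimes A$, then by (i) the action $\gamma$ would be amenable as well; but $\gamma$ is a genuine amenable action on the \emph{unital} \Cs-algebra $\mathcal{O}_2\otimes A$, and a countable group admitting such an action is necessarily exact (see \cite{BO}), contradicting the hypothesis that $\Gamma$ is non-exact.

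The main obstacle is isolating the correct twist-robust notion of amenability and verifying the stability statements (i) and (ii); once amenability is seen to factor through the induced genuine action on $Z(A\sd)$, where both the $2$-cocycle and the exterior-equivalence unitaries become invisible, these reduce to routine transport of the positive-type nets along an equivariant unital $*$-homomorphism. The only genuinely nontrivial ingredient is the existence result of \cite{Suzeq}, \cite{OS}, which furnishes the amenable cocycle action on $\mathcal{O}_2$ in the first place.
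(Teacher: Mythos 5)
Your argument has the right obstruction (amenability) and its stability steps (i), (ii) are correct for countable discrete $\Gamma$, but it has a genuine gap at its very first step, which is in fact the heart of the theorem. The existence of an amenable cocycle action $(\alpha,\fw)\colon \Gamma \acts \mathcal{O}_2$ is \emph{not} the content of \cite{Suzeq}, \cite{OS}: those papers produce amenable \emph{genuine} actions on \emph{non-unital} simple \Cs-algebras, e.g.\ an amenable action $\sigma \colon \Gamma \acts \mathcal{O}_2\otimes \IK$ obtained from Theorem 6.1 of \cite{OS}. For a non-exact group there cannot exist an amenable genuine action on the unital algebra $\mathcal{O}_2$ --- that impossibility is exactly Corollary 3.6 of \cite{OS}, the fact you invoke at the end --- so manufacturing an amenable \emph{cocycle} action on $\mathcal{O}_2$ is precisely what must be constructed, and it is what the paper does: it takes the corner of $\sigma$ at $p:=1\otimes e$ ($e\in\IK$ a minimal projection), using that all nonzero projections of $\mathcal{O}_2\otimes\IK$ are Murray--von Neumann equivalent \cite{Cun} to choose partial isometries $\fv_g$ with $\fv_g\fv_g^\ast=p$, $\fv_g^\ast\fv_g=\sigma_g(p)$, and setting $\alpha_g:=\ad(\fv_g)\circ\sigma_g|_{pBp}$, $\fw_{g,h}:=\fv_g\sigma_g(\fv_h)\fv_{gh}^\ast$. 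Within your framework you would then still have to verify that this corner cocycle action is amenable in your center-of-the-bidual sense; this is doable (simplicity of $B=\mathcal{O}_2\otimes\IK$ forces the central support of $p$ in $B\sd$ to be $1$, so $Z((pBp)\sd)\cong Z(B\sd)$ equivariantly, and the $\ad(\fv_g)$ twists vanish on the center), but it is an argument, not a citation.

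Once that construction is supplied, the rest of your proof is sound and genuinely different from the paper's. The paper never defines amenability for cocycle actions at all: it stabilizes, showing that $(\alpha,\fw)\otimes 1_{\IK}$ is exterior equivalent to $\sigma$ (its Lemma on corners) and that $(\beta,\fx)\otimes 1_{\IK}$ untwists by the stabilization trick \cite{Sut}, \cite{PR}, so a hypothetical genuine model $\gamma$ of the diagonal would have $\gamma\otimes 1_{\IK}$ exterior equivalent to a tensor product of an amenable genuine action with a genuine action, and then it uses permanence properties of amenability for genuine actions only. Your (i) and (ii) --- invisibility of inner perturbations on $Z(A\sd)$, and pushing positive-type nets forward along the normal equivariant unital embedding $z\mapsto z\otimes 1$ --- replace that stabilization bookkeeping and are simpler in the discrete case (the paper's Borel-continuity care is needed only because it treats locally compact groups). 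One further citation should be repaired: the final ingredient, that an amenable genuine action on a \emph{unital}, generally noncommutative, \Cs-algebra forces exactness, is Corollary 3.6 of \cite{OS}; it is not available in \cite{BO}, which handles topologically amenable actions on compact spaces.
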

The next result shows that a new tensor component is necessary to obtain the clean splitting,
and $\mathcal{O}_2$ is a (virtually) minimal possible choice.
\begin{Thmint}[For the precise statements and the proofs, see Section \ref{section:O2}]
There is an inclusion $B \subset (A, \alpha)$ as in Theorem \ref{Thmint:Cs} with the following property.
There is $C\in \Int{B}{A\rca{\alpha}\Gamma}$ such that for any unital \Cs-algebra $D$ with $[1_D]_0\neq 0$,
the tensor product $C\otimes D$ does not enjoy a natural twisted crossed product splitting. 
\end{Thmint}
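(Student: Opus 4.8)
The plan is to isolate a single, purely K-theoretic invariant whose non-vanishing obstructs every natural splitting, to realize it by an explicit inclusion, and then to control its behaviour under $\otimes D_0$. First I would reformulate what a natural twisted crossed product splitting means at the level of spectral subspaces. Since $\Gamma$ has the AP, the structural input behind Theorem~\ref{Thmint:Cs} shows that each $C \in \Int{B}{A\rca{\alpha}\Gamma}$ is $\Gamma$-graded: writing $E_g$ for the canonical slice maps, $C$ is the closed span of its spectral subspaces $C_g := \{c \in C : E_h(c) = 0 \ (h \neq g)\}$, and $D := C_e = C \cap A$. A natural splitting over a subgroup $\Lambda < \Gamma$ exists exactly when $C_g = 0$ for $g \notin \Lambda$ and, for every $\lambda \in \Lambda$, the spectral subspace $C_\lambda$ is a free rank-one right Hilbert $D$-module, i.e. $C_\lambda = \tilde u_\lambda D$ for a unitary $\tilde u_\lambda \in C$ of degree $\lambda$. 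Thus $C$ fails to split precisely when some nonzero $C_g$, regarded as a finitely generated projective right $D$-module, is not free of rank one, and the obstruction is the class
\[\kappa_g(C) := [C_g]_0 - [1_D]_0 \in K_0(D).\]

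Next I would produce the example. Fixing a group $\Gamma$ with the AP, I would build a \Cs-irreducible, centrally $\Gamma$-free inclusion $B \subset (A, \alpha)$ of unital $\Gamma$-\Cs-algebras, with $A$ and $D$ in the bootstrap class and with $K_0(D) \cong \IZ$, together with an intermediate algebra $C$ strictly between $A$ and $A \rca{\alpha}\Gamma$, engineered so that its degree-$g$ spectral subspace $C_g$ is full yet represents a projective module with $\kappa_g(C)$ a generator of $K_0(D)$. Concretely I would generate $C$ from $D$ and a single partial isometry of the form $p u_g$, choosing the dynamics so that $C_g$ becomes the non-free projective module cut out by $p$. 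For such $C$ no $C_g$ with $\kappa_g(C) \neq 0$ can contain a degree-$g$ unitary, so $C$ admits no natural splitting; on the other hand $K_0(\mathcal{O}_2 \otimes D) = 0$ forces $\kappa_g(\mathcal{O}_2 \otimes C) = 0$, fully consistent with the genuine splitting provided by Theorem~\ref{Thmint:Cs}.

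Finally I would establish robustness under $\otimes D_0$, and this is where the real work lies. For a unital \Cs-algebra $D_0$ one has $(C \otimes D_0)_g = C_g \otimes D_0$, so the obstruction becomes the external product $\kappa_g(C) \otimes [1_{D_0}]_0 \in K_0(D \otimes D_0)$. The key computation is that, by the Künneth theorem for tensor products within the bootstrap class together with the arrangement $K_0(D) \cong \IZ$ with $\kappa_g(C)$ a generator, the map $[1_{D_0}]_0 \mapsto \kappa_g(C) \otimes [1_{D_0}]_0$ is injective; hence $C \otimes D_0$ carries a nonzero degree-$g$ obstruction, and cannot split, precisely when $[1_{D_0}]_0 \neq 0$, while $[1_{\mathcal{O}_2}]_0 = 0$ annihilates it. I expect the main obstacle to be twofold. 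The first difficulty is constructing $B \subset (A, \alpha)$ that simultaneously satisfies \Cs-irreducibility — which forces every intermediate \Cs-algebra, in particular $D$, to be simple — and central $\Gamma$-freeness, while still supporting a \emph{proper} intermediate $C$ whose spectral module is genuinely non-free; reconciling simplicity of $D$ with a nontrivial K-theoretic module class carried by $C_g$ is the delicate point. The second difficulty is to rule out \emph{all} natural splittings of $C \otimes D_0$ over every subgroup, which requires controlling the entire spectral subspace $(C \otimes D_0)_g$, including elements produced by completion, so as to guarantee that tensoring with $D_0$ creates no degree-$g$ unitary — that is, that the module-theoretic obstruction is not accidentally resolved by the tensor factor.
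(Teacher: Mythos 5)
Your overall strategy --- obstructing every natural splitting by showing that a nonzero spectral subspace contains no unitary, detecting this by a $K_0$-class, and checking that the class survives $\otimes\, D_0$ via a K\"unneth-type computation while $\mathcal{O}_2$ kills it --- is exactly the shape of the paper's Section \ref{section:O2} argument. But the one step of your plan that is pinned down numerically is a step that fails. If $B \subset A$ is \Cs-irreducible then $D := C \cap A$ is simple, and the axioms satisfied by a nonzero spectral subspace $X_g = \E_g(C)$ (namely $X_g X_g^\ast \subset D$, $\alpha_g^{-1}(X_g^\ast X_g)\subset D$, $DX_gD\subset X_g$, together with simplicity) force $\overline{\mathrm{span}}\, X_g X_g^\ast = D$ and $\overline{\mathrm{span}}\, X_g^\ast X_g = \alpha_g(D)$. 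Hence $X_g$ is automatically an imprimitivity bimodule implementing an ($\alpha_g$-twisted) self-Morita equivalence of $D$, so its right-module class is $\phi([1_D]_0)$ for some automorphism $\phi$ of ${\rm K}_0(D)$, and your obstruction is constrained to be $\kappa_g(C) = (\phi - \mathrm{id})([1_D]_0)$. If ${\rm K}_0(D) \cong \IZ$ then $\phi = \pm\,\mathrm{id}$, so $\kappa_g \in \{0, -2[1_D]_0\}$: your requirement that $\kappa_g$ be a generator is unachievable, and the best available class, $-2[1_D]_0$, is not torsion-robust. Concretely, taking $D_0 = \mathcal{O}_3$, where ${\rm K}_0(\mathcal{O}_3)\cong \IZ/2\IZ$ with $[1_{D_0}]_0 \neq 0$, one gets $\kappa_g \otimes [1_{D_0}]_0 = 0$, so your K\"unneth obstruction vanishes although $[1_{D_0}]_0 \neq 0$. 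Since the theorem quantifies over \emph{all} unital $D_0$ with nonvanishing unit class, a rank-one ${\rm K}_0$ can never work.

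The paper's construction is engineered precisely to evade this: it takes a transcendental $x\in(0,1)$ and a simple unital AF algebra $B$ with ${\rm K}_0(B) \cong \IZ[x, x^{-1}]$, $[1_B]_0 = 1$ --- transcendence is exactly what makes the powers $(x^n)_{n\in\IZ}$ a $\IZ$-basis, so ${\rm K}_0(B)\cong \bigoplus_{\IZ}\IZ$ --- and realizes the automorphism $\phi$ as the \emph{shift} (multiplication by $x$), dynamically implemented by Paschke's crossed product $C = B \rca{\rho} \IN$ by a corner endomorphism $\rho\colon B \cong pBp$ with $[p]_0 = x$; your heuristic ``single partial isometry $pu_g$'' is realized precisely as the implementing isometry $s$ with $ss^\ast = p$, and the eigenspace is $C_\chi = Bs$. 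Then $[1\otimes p]_0$ and $[1]_0$ lie in \emph{different free summands}: $[1_{D_0}]_0\otimes\delta_1 \neq [1_{D_0}]_0\otimes\delta_0$ in $\bigoplus_{\IZ}{\rm K}_0(D_0)$ whenever $[1_{D_0}]_0\neq 0$, with no torsion caveat and, since $B$ is AF, no bootstrap/K\"unneth hypothesis at all (continuity of ${\rm K}_0$ along the inductive limit suffices, so $D_0$ is genuinely arbitrary). Beyond this, the items you explicitly defer --- simultaneously securing \Cs-irreducibility (done in the paper by stabilizing $\rho$ to an automorphism and invoking Kishimoto's theorem), central freeness, and the existence of the graded intermediate algebra (done via the eigenspace axioms of Remark \ref{Rem:partial}) --- constitute the actual content of the paper's proof, so your proposal as written has both an unexecuted construction and, more decisively, a quantitative arrangement that cannot be executed.
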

Still a partial splitting result is available without adding a new tensor component. See Remark \ref{Rem:partial}.

By applying Theorem \ref{Thmint:Cs} to the Bisch--Haagerup type inclusions $A^K \subset A \rca{\alpha} \Gamma$ \cite{BH}
arising from compact-by-discrete group actions $\alpha \colon K \rtimes_\sigma \Gamma \acts A$, we establish the following new Galois's type theorem.

\begin{Thmint}\label{Thmint:Galois}
Let $\alpha \colon K \rtimes_\sigma \Gamma \acts A$ be an action on a simple separable \Cs-algebra
such that $\id_{\mathcal{O}_2} \otimes \alpha$ is isometrically shift-absorbing.
Then there is
a canonical lattice isomorphism
\[\cL(K, \Gamma, \sigma) \cong \Int{A^K}{A\rca{\alpha} \Gamma},\]
where $\cL(K, \Gamma, \sigma)$ is a lattice associated to $\sigma \colon \Gamma \rightarrow {\rm Aut}(K)$.
\end{Thmint}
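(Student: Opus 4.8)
The plan is to combine three inputs: the Tensor Splitting Theorem (item~(1) of the introduction) to pass to an $\mathcal{O}_2$-stable model in which Theorem~\ref{Thmint:Cs} applies without residual tensor factors, Mukohara's Galois correspondence for compact groups \cite{Mu} (item~(2-1)) to control the $K$-direction, and Theorem~\ref{Thmint:Cs} itself to control the $\Gamma$-direction.

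First I would record that $\alpha|_\Gamma$ preserves $A^K$: since $\Gamma$ normalizes $K$ inside $K \rtimes_\sigma \Gamma$, the fixed-point algebra $A^K$ is $\Gamma$-invariant, so $A^K \subset (A, \alpha|_\Gamma)$ is a $\Gamma$-\Cs-algebra inclusion and $A^K \subset A \rca{\alpha}\Gamma$ is meaningful. To remove the $\mathcal{O}_2$ that Theorem~\ref{Thmint:Cs} produces, I would tensor the whole inclusion by $\mathcal{O}_2$. As $\mathcal{O}_2$ is simple, the Tensor Splitting Theorem yields a lattice isomorphism
\[
\Int{A^K}{A \rca{\alpha}\Gamma} \cong \Int{(\mathcal{O}_2\otimes A)^K}{(\mathcal{O}_2\otimes A)\rca{\id_{\mathcal{O}_2}\otimes\alpha}\Gamma}, \quad C \mapsto \mathcal{O}_2\otimes C,
\]
using $(\mathcal{O}_2\otimes A)^K = \mathcal{O}_2\otimes A^K$ (as $K$ acts trivially on $\mathcal{O}_2$) and $\mathcal{O}_2\otimes(A\rca{\alpha}\Gamma)=(\mathcal{O}_2\otimes A)\rca{\id_{\mathcal{O}_2}\otimes\alpha}\Gamma$. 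Writing $\tilde A:=\mathcal{O}_2\otimes A$ and $\tilde\alpha:=\id_{\mathcal{O}_2}\otimes\alpha$, it then suffices to describe $\Int{\tilde A^K}{\tilde A\rca{\tilde\alpha}\Gamma}$, and $\tilde A$ is now $\mathcal{O}_2$-stable.

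Next I would check the hypotheses of Theorem~\ref{Thmint:Cs} for $\tilde A^K \subset (\tilde A, \tilde\alpha|_\Gamma)$. Since $\tilde\alpha=\id_{\mathcal{O}_2}\otimes\alpha$ is isometrically shift-absorbing, Mukohara's theorem applies to the compact action $K \acts \tilde A$: every intermediate algebra of $\tilde A^K \subset \tilde A$ equals $\tilde A^L$ for a closed subgroup $L<K$ and is in particular simple, which gives \Cs-irreducibility; the same hypothesis forces the residual $\Gamma$-action to be centrally $\Gamma$-free. Applying Theorem~\ref{Thmint:Cs} (or its extension, Theorem~\ref{Thm:generalCs}), each $\tilde C\in\Int{\tilde A^K}{\tilde A\rca{\tilde\alpha}\Gamma}$ satisfies $\mathcal{O}_2\otimes\tilde C=(\mathcal{O}_2\otimes(\tilde C\cap\tilde A))\rca{{}^\fu\tilde\alpha, {\rm d}\fu}\Lambda$ for some $\Lambda<\Gamma$; because $\tilde A$ already absorbs $\mathcal{O}_2$ the extra factor is redundant, so $\tilde C=(\tilde C\cap\tilde A)\rca{{}^\fu\tilde\alpha, {\rm d}\fu}\Lambda$, and the compact Galois correspondence identifies $\tilde C\cap\tilde A=\tilde A^L$.

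Finally I would package the invariants: each intermediate algebra determines a closed subgroup $L<K$, a subgroup $\Lambda<\Gamma$ satisfying $\sigma_g(L)=L$ for all $g\in\Lambda$ (the condition ensuring $\Lambda$ preserves $\tilde A^L$), and a perturbation $\fu$ whose coboundary ${\rm d}\fu$ records the class distinguishing the various subgroups of $K\rtimes_\sigma\Gamma$ sharing these invariants; this is exactly the data enumerated by $\cL(K,\Gamma,\sigma)$, and I would verify that the assignment is an order-preserving bijection respecting meets and joins. The hard part will be this last step: matching the analytic $2$-cocycle ${\rm d}\fu$ delivered by Theorem~\ref{Thmint:Cs} with the group-cohomological data classifying the intermediate subgroups of the semidirect product, and checking that the identification is canonical and a lattice map. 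A secondary technical point is to deduce central $\Gamma$-freeness of $\tilde\alpha|_\Gamma$ directly from the isometrically shift-absorbing hypothesis.
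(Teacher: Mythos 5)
Your overall architecture does match the paper's (reduce to $\tilde A=\mathcal{O}_2\otimes A$ so the extra tensor factor of Theorem \ref{Thmint:Cs} gets absorbed; Mukohara's correspondence \cite{Mu} for the $K$-direction and for \Cs-irreducibility; central $\Gamma$-freeness of $A^K\subset (A,\alpha|_\Gamma)$ from isometric shift-absorption, which is exactly Lemma \ref{Lem:isa}), but there is a genuine gap you do not notice: the AP. Theorems \ref{Thmint:Cs} and \ref{Thm:generalCs} assume $\Gamma$ has the AP, whereas Theorem \ref{Thmint:Galois} does not, so your wholesale invocation of the splitting theorem to get $\tilde C=(\tilde C\cap\tilde A)\rca{{}^\fu\tilde\alpha,{\rm d}\fu}\Lambda$ proves only an AP-restricted statement. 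The paper uses only the AP-free half of the proof of Theorem \ref{Thm:generalCs} --- the existence, for each $s$ with $\E_s(C)\neq\{0\}$, of $\fu_s\in\cM(A)^{\rm u}$ with $\E_s(C)=(C\cap A)\fu_s$ (Lemmas \ref{Lem:cf} and \ref{Lem:unitary}) --- and replaces the AP-based membership criterion (Lemma \ref{Lem:AP2}) by a purpose-built conditional expectation $\E_{\mathfrak{l}}\colon A\rca{\alpha}\Gamma\rightarrow \Cso(L,\Lambda,\omega)$, constructed in Lemma \ref{Lem:exp} by averaging over the compact group $L^{\rm ab}$ against the twisted unitaries $\fU_g$. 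Without some such device your final equality $C=\Cso(L,\Lambda,\omega)$ does not follow for general $\Gamma$.

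The second gap is the step you explicitly defer as ``the hard part,'' and your guess at its content is wrong. The third component of $\cL(K,\Gamma,\sigma)$ is not a $2$-cocycle class, nor group-cohomological data classifying intermediate subgroups of $K\rtimes_\sigma\Gamma$: it is a scalar $1$-cocycle $\omega\in{\rm Z}^1(\Lambda,\widehat{L^{\rm ab}};\sigma^\ast)$, and the twist ${\rm d}\fu$ drops out of the classification entirely. The mechanism is Lemma \ref{Lem:conj}: from $\fu_s A^{\sigma_s(L)}\fu_s^\ast=A^L$ and irreducibility of $A^K\subset A$ one gets $\alpha_g(\fu_s)^\ast\fu_s\in\IC$ for $g\in L$, whence $\sigma_s(L)=L$ and $\fu_s\in\cM(A)_{\omega(s)}$ for a character $\omega(s)$ of $L$ factoring through $L^{\rm ab}$; consequently $\E_s(C)=A^L\fu_s=A_{\omega(s)}$ is an honest eigenspace, $C$ collapses to $\overline{\rm span}\{A_{\omega(s)}s: s\in\Lambda\}$, and the cocycle identity for $\omega$ follows from $A_{\omega(s)}\alpha_s(A_{\omega(t)})\subset A_{\omega(s)\sigma^\ast_s(\omega(t))}$ together with Lemma \ref{Lem:ev}. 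That same Lemma \ref{Lem:ev} (non-vanishing $A_\chi\neq\{0\}$ for every $L<K$ and $\chi\in\widehat{L^{\rm ab}}$) is also needed for injectivity of $(L,\Lambda,\omega)\mapsto\Cso(L,\Lambda,\omega)$, and your outline never addresses it. In short: the skeleton is the paper's, but the two decisive ideas --- the eigenvector rigidity supplied by irreducibility, which converts the analytic perturbation $\fu$ into the character-valued $1$-cocycle $\omega$, and the AP-free conditional expectation --- are missing.
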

The definition of $\cL(K, \Gamma, \sigma)$ and an explicit isomorphism
are given in Section \ref{section:Galois}.

We also establish the von Neumann algebra version of Theorem \ref{Thmint:Cs}, where $\mathcal{O}_2$ is replaced by
a type I$_\infty$ factor (Corollary \ref{Cor:vnsp}).
Because the K$_0$-obstruction is very small in factors, under assumptions on the Murray--von Neumann type,
we do not need to add a tensor product component.
\begin{Thmint}\label{Thmint:vn}
Let $N \subset (M, \alpha)$ be an irreducible, centrally $\Gamma$-free subfactor.
\begin{enumerate}[\upshape(1)]
\item If $M$ is of finite type, then
for any $C\in \Int{N}{M\vnc{\alpha}\Gamma}$,
there is a subgroup $\Lambda<\Gamma$ and a $((C\cap M), \Lambda)$-respecting map $\fu \colon \Lambda \rightarrow M^{\rm u}$ with
\[C =(C \cap M) \vnc{{}^\fu \alpha, {\rm d}\fu} \Lambda.\]

\item If $N$ is of infinite type, then 
for any $C\in \Int{N}{M\vnc{\alpha}\Gamma}$,
there is a subgroup $\Lambda<\Gamma$ and an $\alpha$-$1$-cocycle $\fu \colon \Lambda \rightarrow M^{\rm u}$
with \[C = (C\cap M)\vnc{{}^\fu \alpha}\Lambda.\]
\end{enumerate}
\end{Thmint}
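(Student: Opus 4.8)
The plan is to deduce the clean splitting from the already established type~$\mathrm I_\infty$ version (Corollary~\ref{Cor:vnsp}), namely the identity $\IB(\ell^2(\IN))\btimes C=(\IB(\ell^2(\IN))\btimes D)\vnc{{}^\fu\alpha,\,{\rm d}\fu}\Lambda$ for $D:=C\cap M$, and then to remove the type~$\mathrm I_\infty$ tensor factor by exploiting factoriality together with the Murray--von Neumann type. First I record the basic consequences of irreducibility: since $N$ is $\alpha$-invariant and $N'\cap M=\IC$, the intermediate algebra $D=C\cap M$ satisfies $D'\cap M=\IC$ and, because $\alpha_g(D)\supseteq\alpha_g(N)=N$, also $\alpha_g(D)'\cap M=\IC$ for every $g\in\Ga$. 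These scalar relative commutants are what ultimately let me dispense with the extra tensor factor that is unavoidable in the \Cs-setting of Theorem~\ref{Thmint:Cs}.

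Next I extract the genuine Fourier structure of $C$ itself. Computing $\bigl(\IB(\ell^2(\IN))\btimes C\bigr)\cap\bigl(\IB(\ell^2(\IN))\btimes M\bigr)$ by a slice argument (the Ge--Kadison tensor splitting \cite{GK}) returns the $e$-component $\IB(\ell^2(\IN))\btimes D$; peeling off the type~$\mathrm I_\infty$ factor identifies, for each $g$ in the support subgroup $\Lambda<\Ga$, the $g$-th Fourier component of $C$ as a weakly closed $D$--$\alpha_g(D)$-bimodule, and central $\Ga$-freeness forces it to be of the rank-one form $D v_g$ with $v_g\in M$ a partial isometry normalising the inclusion (this is the mechanism isolated in \cite{SuzCMP}). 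The decisive point is that $v_g^\ast v_g\in\alpha_g(D)'\cap M=\IC$ and $v_g v_g^\ast\in D'\cap M=\IC$, so each $v_g$ is in fact a \emph{unitary}; and irreducibility (Schur's lemma) forces $v_g$ to be unique up to a scalar, whence $v_g\alpha_g(v_h)=\omega(g,h)\,v_{gh}$ for a scalar $2$-cocycle $\omega\in Z^2(\Lambda,\IT)$. This is precisely where the $K$-theoretic obstruction behind the $\mathcal{O}_2$ in Theorem~\ref{Thmint:Cs} disappears, since in a factor the relevant projections are forced to be $0$ or $1$.

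For part~(1), when $M$ is finite the tracial $L^2$-theory makes the reconstruction of $C$ from its components transparent: following the finite-algebra technology of \cite{SuzCMP}, $C$ equals the weakly closed linear span of the $D v_g\lambda_g$, so setting $\fu:=(v_g)_{g\in\Lambda}$ gives a $((C\cap M),\Lambda)$-respecting map of unitaries with ${\rm d}\fu=\omega$ and
\[C=(C\cap M)\vnc{{}^\fu\alpha,\,{\rm d}\fu}\Lambda.\]
Here the scalar cocycle $\omega$ genuinely survives, reflecting the fact that $H^2$ with unitary coefficients need not vanish in a finite factor.

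For part~(2), when $N$ (hence $M$) is properly infinite I must additionally trivialise $\omega$. Writing $\beta:={}^v\alpha$ for the perturbed action, I seek unitaries $z_g\in M$ with $z_g\beta_g(z_h)z_{gh}^{-1}=\overline{\omega(g,h)}$, for then $\fu_g:=z_g v_g$ is a genuine $\alpha$-$1$-cocycle and ${}^\fu\alpha$ is an honest action, yielding $C=(C\cap M)\vnc{{}^\fu\alpha}\Lambda$. Such $z_g$ are produced from proper infiniteness: $M\cong\IB(\ell^2(\IN))\btimes M$ accommodates a $\beta$-equivariantly embedded copy of $\IB(\ell^2(\Lambda))$ carrying the $\overline\omega$-twisted left regular representation, exactly the type~$\mathrm I_\infty$ mechanism that lets the cocycle be absorbed (the von Neumann analogue of the role played by $\mathcal{O}_2$). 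The main obstacle is this last step: arranging the trivialising unitaries $z_g$ inside $M$ compatibly with the perturbed action $\beta$ --- equivalently, making the descent from the $\IB(\ell^2(\IN))$-tensored splitting both equivariant and compatible with the inclusion $N\subset M$ --- is the only place where proper infiniteness is essential, and it is where the argument is most delicate.
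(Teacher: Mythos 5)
Your proposal is circular at its foundation and, more seriously, it buries the actual mathematical content of the theorem in an unjustified step. You take Corollary \ref{Cor:vnsp} as ``already established,'' but in the paper that corollary is deduced \emph{from} Theorem \ref{Thmint:vn}(2), applied to $\IB\btimes N\subset\IB\btimes M$; it is not available beforehand, and any independent proof of it would have to redo precisely the lemmas whose content your proposal never supplies (the coefficient containment $\E_s(C)s\subset C$ from central freeness, the existence of unitaries in the nonzero coefficient spaces, and the bicommutant reconstruction of $C$ from its coefficients --- Lemmas \ref{Lem:cfv}, \ref{Lem:unitaryv}/\ref{Lem:type}, and \ref{Lem:coeffv}). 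Even granting the tensored splitting, the descent to $C$ itself is where the theorem lives, and your argument for it fails: you assert that central $\Gamma$-freeness forces $\E_s(C)=Dv_s$ for a \emph{partial isometry} $v_s\in M$. Central freeness yields only the bimodule property $\E_s(C)s\subset C$ (that is what \cite{SuzCMP} isolates); it does not produce a single partial-isometry generator. Indeed, Remark \ref{Rem:countexv} exhibits an irreducible, centrally $\Gamma$-free subfactor ($N$ of type II$_1$, $M$ of type III) for which $\E_n(C)$ contains no unitary --- hence, by your own upgrading argument, no partial-isometry generator either --- so no argument that postpones the type hypotheses past this point can be correct. In the paper the type hypotheses enter exactly here: Lemma \ref{Lem:type} (Zorn's lemma on partial isometries in the weak-$\ast$ closed module $\E_s(C)$, where finiteness of $M$ guarantees both defect projections of a maximal non-unitary element are nonzero and irreducibility of $N\subset M$ then extends it) and Lemma \ref{Lem:unitaryv} (the identification $M\cong\IB\btimes M_p$ via isometries in the infinite factor $N$, polar decomposition, and the Comparison Theorem in the factor $(C\cap M)_p$).

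Two of your supporting claims are also false as stated. First, from $\E_s(C)=Dv_s$ one gets $v_sv_s^\ast\in\E_s(C)\E_s(C)^\ast\subset C\cap M=D$, \emph{not} $v_sv_s^\ast\in D'\cap M$; so irreducibility cannot be applied to $v_sv_s^\ast$ the way you do. Second, a unitary generator of $\E_s(C)$ is unique only up to left multiplication by unitaries of $D$, not up to scalars, so ${\rm d}\fu_{s,t}=\fu_s\alpha_s(\fu_t)\fu_{st}^\ast$ is a $D^{\rm u}$-valued $2$-cocycle; your Schur-type claim that it lies in ${\rm Z}^2(\Lambda,\IT)$ has no basis, and the theorem itself only asserts a $D$-valued cocycle action $({}^\fu\alpha,{\rm d}\fu)$. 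This error propagates into your part (2): the untwisting must handle $D$-valued cocycles, which is exactly Sutherland's stabilization theorem \cite{Sut} for cocycle actions on properly infinite von Neumann algebras (applicable because $N\subset D$ forces $D$ to be properly infinite); the paper concludes part (2) with this citation, whereas your sketch constructs trivializing unitaries only for a scalar cocycle and presents that construction, rather than the existence of unitary generators, as the delicate point.
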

The proof is given in Section \ref{section:vn}.
We in fact show it for cocycle actions.
\section{Preliminaries}\label{section:Prelim}
\subsection{Notations}
Here we fix some notations used throughout the paper.
\begin{itemize}
\item The symbol `$\otimes$' is used for the minimal \Cs-tensor products and the Hilbert space tensor products.
\item For a normed space $X$, denote by $(X)_1$ the closed unit ball of $X$.
\item For $x, y\in X$ and $\epsilon>0$, we denote by $x \approx_\epsilon y$ when $\|x -y\|<\epsilon$.
For $x\in X$ and $S\subset X$, we denote by $x\in_\epsilon S$ when there is an element $s\in S$ with $x\approx_\epsilon s$.
\item For a \Cs-algebra $A$, denote by $A^{\rm p}$ the set of all projections in $A$.
Denote by $\cM(A)$ the multiplier algebra of $A$.
When $A$ is unital, we write $A^{\rm u}$ for the unitary group of $A$.
\item Put $\IB:=\IB(\ell^2(\IN))$, $\IK:=\IK(\ell^2(\IN))$
(the bounded operator algebra and the compact operator algebra on $\ell^2(\IN)$).
\item The unit of a group is usually denoted by $e$.
\item We use the symbol `$<$' for (not necessary proper) closed subgroups. So $H<G$ means that $H$ is a closed subgroup of a topological group $G$.
\end{itemize}
\subsection{The intermediate operator algebra lattice}
Given an inclusion of mathematical objects, one often has a natural lattice structure on the set of all intermediate objects.
In this paper, we study the \emph{intermediate operator algebra lattices}.

For an inclusion $B \subset A$ of \Cs-algebras, we write
$\Int{B}{A}$ for the set of all intermediate \Cs-algebras $B \subset C \subset A$.
It is not hard to check that $\Int{B}{A}$ forms a complete lattice with respect to the set inclusion order.
Indeed for a non-empty subset $S \subset \Int{B}{A}$, the supremum of $S$ in $\Int{B}{A}$ is
the \Cs-algebra generated by the union of $S$, while the infimum of $S$ is the intersection of $S$.

Similarly, for an inclusion $N\subset M$ of von Neumann algebras,
we write $\Int{N}{M}$ for the (complete) lattice of all intermediate von Neumann algebras $N \subset C \subset M$.
(Certainly $\Int{N}{M}$ is very different from the intermediate \Cs-algebra lattice of $N\subset M$.
However we do not distinguish the notation, because there is no danger of confusion in this article.
For von Neumann algebra inclusions, we only consider the intermediate von Neumann algebra lattice.)

\subsection{Compact group actions and eigenspaces}\label{subsection:compact}
For a compact group $K$, let $m_K$ denote its Haar probability measure.

Let $\alpha \colon K \acts A$ be an action of a compact abelian group on a \Cs-algebra.
For each character $\chi \in \widehat{K}$, define the \emph{eigenspace $A_\chi$ of $\alpha$ associated with $\chi$} to be
\[A_\chi:=\{a\in A: \alpha_s(a)=\chi_s \cdot a \quad{\rm~for~}s\in K\}.\]
For $\chi, \kappa\in \widehat{K}$, clearly one has
\[(A_\chi)^\ast=A_{\chi^{-1}},\quad A_\chi \cdot A_\kappa \subset A_{\chi \cdot \kappa}.\]
For each $\chi \in \widehat{K}$, one has a $K$-equivariant completely contractive projection
\[\E^K_{\chi}\colon A \rightarrow A_\chi\]
given by
\[\E^K_\chi(a):=\int_{K} \alpha_s(a)\overline{\chi_s} {\rm d}m_K(s).\]
It is known that \[\overline{\rm span}\left\{A_\chi: \chi \in \widehat{K}\right\}=A.\]
(See Section 8.1 in \cite{Pedbook} for instance.)
When $\chi=1$, $A_1$ is nothing but the fixed point algebra $A^K$.
We simply denote $\E^K_1$ by $\E^K$.
Note that $\E^K$ is a faithful conditional expectation.

Obviously the analogous definitions work for von Neumann algebras.
We use the same notations for the von Neumann algebra variants.
Note that in this case, the eigenspaces are weak-$\ast$ closed, and the projections $\E^K_\chi$ are normal.

\subsection{Cocycle actions and reduced twisted crossed products}
Let $A$ be a \Cs-algebra, and let $\Gamma$ be a discrete group.
Recall that a \emph{cocycle} (or \emph{twisted}) \emph{action} $(\alpha, \fw) \colon \Gamma \acts A$
is a pair of maps
\[\alpha \colon \Gamma \rightarrow {\rm Aut}(A), \quad \fw \colon \Gamma \times \Gamma \rightarrow \cM(A)^{\rm u}\]
satisfying the axioms
\[\alpha_s\circ \alpha_t=\ad(\fw_{s, t})\circ \alpha_{st}, \quad \alpha_r(\fw_{s, t})\fw_{r, st}=\fw_{r, s}\fw_{rs, t},\quad \fw_{e, s}=\fw_{s, e}=1\] 
for $r, s, t\in \Gamma$.
The map $\fw$ is referred to as a $2$-cocycle of $\alpha$.
Cocycle actions naturally appear in many contexts, e.g., in the studies of group extensions and corners of actions.
We refer the reader to \cite{Sut}, \cite{PR}, \cite{Wil} for some basic facts on cocycle actions.
In this article, cocycle actions play a central role to describe intermediate \Cs-algebras of the crossed product inclusions $B \subset A \rca{\alpha} \Gamma$
arising from $\Gamma$-\Cs-algebra inclusions $B \subset (A, \alpha)$.
This may be an unexpected and novel application of cocycle actions.

Similar to the ordinary actions, associated to a cocycle action $(\alpha, \fw) \colon \Gamma \acts A$,
one defines the \emph{algebraic twisted crossed product} $A\rtimes_{{\rm alg}, \alpha, \fw} \Gamma$ as follows.
First, as a linear space, it is the space of all finitely supported $A$-valued functions on $\Gamma$.
For $a\in A$ and $s\in \Gamma$, we write $a s^{\fw}$ for the function $a \delta_s$ in this algebra.
For ordinary actions (i.e., the case $\fw = 1$), we simply denote $s^\fw$ by $s$ for short.
The multiplication and involution on $A\rtimes_{{\rm alg}, \alpha, \fw} \Gamma$ are given by
\[(as^\fw)(bt^\fw):=a\alpha_s(b)\fw_{s, t} (st)^{\fw}, \quad (as^\fw)^\ast := \alpha_{s}^{-1}(a^\ast)\fw_{s^{-1}, s}^\ast (s^{-1})^\fw\]
for $a, b\in A,~ s, t\in \Gamma$.
We identify $A$ with the $\ast$-subalgebra $Ae^\fw$ of $A\rtimes_{{\rm alg}, \alpha, \fw} \Gamma$ in the obvious way.

The \emph{reduced twisted crossed product} $A\rca{\alpha, \fw} \Gamma$ of $(\alpha, \fw)$ is the unique \Cs-completion of $A\rtimes_{{\rm alg}, \alpha, \fw} \Gamma$
which admits a faithful conditional expectation $\E\colon A\rca{\alpha, \fw} \Gamma\rightarrow A$
with $\E(x)=x(e)$ for $x\in A\rtimes_{{\rm alg}, \alpha, \fw} \Gamma$.
Similar to the ordinary crossed product, the existence of such a completion is shown via regular covariant representations of $(\alpha, \fw)$,
and the uniqueness is then clear from the faithfulness condition.
Note that $s^\fw\in \cM(A\rca{\alpha, \fw} \Gamma)^{\rm u}$ for $s\in \Gamma$.

Recall that two cocyle actions $(\alpha, \fw), (\beta, \fx) \colon \Gamma \acts A$ are said to be \emph{exterior equivalent}
if there is a map $\fu \colon \Gamma \rightarrow \cM(A)^{\rm u}$ satisfying
\[\alpha_s=\ad(\fu_s)\circ \beta_s \quad{\rm~for~}s\in \Gamma, \quad \fw_{s, t}=\fu_s\beta_s(\fu_t)\fx_{s, t} \fu_{st}^\ast\]
for $s, t\in \Gamma$.
Note that when $(\alpha, \fw)$ and $(\beta, \fx)$ are exterior equivalent,
then their reduced crossed products are isomorphic.
Indeed the map $a s^{\fx} \mapsto a\fu_s s^\fw$ extends to the isomorphism
$A\rca{\beta, \fx} \Gamma \cong A\rca{\alpha, \fw}\Gamma$.

For $(\alpha, \fw)\colon \Gamma \acts A$,
the $2$-cocycle $\fw$ is said to be a \emph{coboundary} if 
$(\alpha, \fw)$ is exterior equivalent to a genuine action $(\beta, 1) \colon \Gamma \acts A$.

For $s\in \Gamma$, we define the completely contractive map
\[\E_s\colon A \rca{\alpha, \fw} \Gamma \rightarrow A\]
by the formula
\[\E_s(x):=\E(x (s^\fw)^\ast).\] 
Clearly $\E_s$ is a continuous extension of the evaluation map
\[{\rm ev}_s\colon A\rtimes_{{\rm alg}, \alpha, \fw} \Gamma \rightarrow A;\quad\quad x\mapsto x(s).\]
We use the same notations for the analogous maps on twisted crossed product von Neumann algebras.

\subsection{Twisted crossed product \Cs-subalgebras in crossed products}
The following maps are fundamental in our splitting theorems.
\begin{Def}\label{Def:resp}
Let $(A, \alpha)$ be a $\Gamma$-\Cs-algebra.
Let $C \subset A$ be a non-degenerate \Cs-subalgebra.
Let $\Lambda<\Gamma$ be a subgroup.
We say a map $\fu\colon \Lambda \rightarrow \cM(A)^{\rm u}$ is \emph{$(C, \Lambda)$-respecting} if it satisfies the relations
\[\fu_s \alpha_s(C) \fu_s^\ast =C, \quad \fu_s \alpha_s(\fu_t)\fu_{st}^\ast \in \cM(C)^{\rm u} \quad{\rm ~for~}s, t\in \Lambda.\]
\end{Def}
Associated to a $(C, \Lambda)$-respecting map $\fu$,
we define the maps
\[{}^\fu \alpha \colon \Lambda \rightarrow \Aut(C) \quad {\rm and}\quad {\rm d}\fu \colon \Lambda \times \Lambda \rightarrow \cM(C)^{\rm u}\]
to be
\[{}^\fu \alpha_s:=\ad(\fu_s)\circ \alpha_s|_C, \quad {\rm d}\fu_{s, t}:= \fu_s \alpha_s(\fu_t) \fu_{st}^\ast \quad {\rm~for~}s, t\in \Lambda.\]
Then the pair $({}^\fu \alpha, {\rm d}\fu)$ forms a cocycle action of $\Lambda$ on $C$.
We remark that ${\rm d}\fu$ may not be a coboundary in $C$,
because the image of $\fu$ may not sit in $\cM(C)$.

For a $(C, \Lambda)$-respecting map $\fu$, we next see that $C\rca{{}^\fu \alpha, {\rm d}\fu} \Lambda$
is identified with a \Cs-subalgebra of $A \rca{\alpha} \Gamma$ in a canonical and unique way.
Let $\iota \colon C \rightarrow A \rca{\alpha}\Gamma$ denote the inclusion map.
Define a map $\fv\colon \Lambda \rightarrow \cM(A \rca{\alpha}\Gamma)^{\rm u}$
to be $\fv_s:=\fu_s \cdot s$ for $s\in \Lambda$.
Then $(\iota, \fv)$ is a covariant representation of $({}^\fu \alpha, {\rm d}\fu)$.
Hence one has a $\ast$-homomorphism
\[\iota \rtimes \fv \colon C \rtimes_{{\rm alg}, {}^\fu \alpha, {\rm d}\fu}\Lambda \rightarrow A \rca{\alpha} \Gamma\]
given by sending $c s^{{\rm d}\fu}$ to $c\fv_s$ for $c\in C$ and $s\in \Lambda$.
The canonical conditional expectation $\E\colon A\rca{\alpha}\Gamma \rightarrow A$ restricts to a faithful conditional expectation from the closure of the image of $\iota \rtimes \fv$ onto $C$.
Moreover the restriction is an extension of the evaluation map ${\rm ev}_e$ on (the image of) $C\rtimes_{{\rm alg}, {}^\fu \alpha, {\rm d}\fu}\Lambda$.
Thus $\iota\rtimes \fv$
extends to an embedding $C\rca{{}^\fu \alpha, {\rm d}\fu} \Lambda \rightarrow A \rca{\alpha} \Gamma$.
Via this embedding, we identify $C\rca{{}^\fu \alpha, {\rm d}\fu} \Lambda$ with a \Cs-subalgebra of $A\rca{\alpha} \Gamma$.
We next see that this subalgebra depends only on $({}^\fu \alpha, {\rm d}\fu)$ (not on the choice of $\fu$).
This justifies the notation $C\rca{{}^\fu \alpha, {\rm d}\fu} \Lambda \subset A\rca{\alpha} \Gamma$.
In fact we observe a slightly more general result.

Let $C_i\subset A$, $\Lambda_i< \Gamma$, and $(C_i, \Lambda_i)$-respecting maps $\fu_i\colon \Lambda_i \rightarrow \cM(A)^{\rm u}$; $i=1, 2$, be given.
Then the two \Cs-subalgebras $C_i \rca{{}^{\fu_i} \alpha,{\rm d}\fu_i} \Lambda_i \subset A \rca{\alpha} \Gamma$; $i=1, 2$ coincide (as subsets) if and only if they satisfy
\[C_1=C_2,\quad \Lambda_1=\Lambda_2,\quad {\rm~and~} \quad \fu_{1, s} \fu_{2, s}^\ast \in \cM(C_1)^{\rm u} \quad{\rm~ for~} s\in \Lambda_1.\]
Indeed if they satisfy the conditions,
then the pointwise product $\fu_1 \fu_2^\ast$ witnesses the exterior equivalence, while the converse is clear.
In other words, two cocycle actions of the above form give rise to the same \Cs-subalgebra
if and only if they are exterior equivalent.
In particular, the subalgebra $C\rca{{}^\fu \alpha, {\rm d}\fu} \Lambda \subset A \rca{\alpha} \Gamma$
depends only on the cocycle action $({}^\fu \alpha, {\rm d}\fu)$, and does not depend on the choice of $\fu$.

More generally, for $C_i$, $\Lambda_i$, $\fu_i$; $i=1, 2$, as above,
one has $C_1 \rca{{}^{\fu_1}\alpha, {\rm d}\fu_1} \Lambda_1 \subset C_2 \rca{{}^{\fu_2}\alpha, {\rm d}\fu_2} \Lambda_2$ in $A \rca{\alpha} \Gamma$ if and only if
they satisfy
\[C_1 \subset C_2,\quad \Lambda_1 < \Lambda_2, \quad \fu_{1, s}\fu_{2, s} ^\ast \in \cM(C_2)^{\rm u} \quad {\rm~for~}s\in \Lambda_1.\]

We give a remark on our notations. When $C\rca{{}^\fu \alpha, {\rm d}\fu} \Lambda$ is regarded as a \Cs-subalgebra of $A \rca{\alpha} \Gamma$ in the above way,
even on $C\rca{{}^\fu \alpha, {\rm d}\fu} \Lambda$, we write $\E_s$ for (the restriction of) the canonical projection of $A \rca{\alpha} \Gamma$,
not that of $C\rca{{}^\fu \alpha, {\rm d}\fu} \Lambda$, for $s\in \Gamma$.

Clearly the analogous definitions and the above statements work for von Neumann algebras.
We use the obvious notations for them.

\subsection{Characterizations of twisted crossed product \Cs-subalgebras}
Haagerup and Kraus \cite{HK} introduced a moderate finite approximation property of locally compact groups so called the \emph{approximation property} (AP).
The AP is weaker than the weak amenability, stronger than the exactness (see \cite{SuzAP} for the non-discrete case), and is closed under many manipulations of groups, e.g., extensions, increasing unions, free products.
Hence many interesting groups, including amenable groups, hyperbolic groups (\cite{Ozwk}), rank one lattices, and their extensions and free products have the AP.
We refer the reader to the original article \cite{HK} and Section 12.4 of the book \cite{BO} for details.

Similar to the previous works \cite{Suz17}, \cite{SuzCMP}, we use the AP to characterize \Cs-subalgebras of the reduced crossed products via the coefficients.
In fact the next result is the only point where we need the AP of the acting group in Theorem \ref{Thmint:Cs}.
\begin{Lem}\label{Lem:AP}
Let $(A, \alpha)$ be a $\Gamma$-\Cs-algebra.
Let $C\subset A$ be a non-degenerate \Cs-subalgebra, let $\Lambda<\Gamma$,
and
let $\fu\colon \Lambda \rightarrow \cM(A)^{\rm u}$ be a $(C, \Lambda)$-respecting map.
Assume that $\Lambda$ has the AP.
Then $x\in A \rca{\alpha} \Gamma$ is contained in $C \rca{{{}^{\fu}\alpha}, {\rm d}\fu} \Lambda$ if and only if it satisfies
$\E_s(x)\in C \fu_s$ for all $s\in \Lambda$ and $\E_s(x)=0$ for all $s\in \Gamma \setminus \Lambda$.
\end{Lem}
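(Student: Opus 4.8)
The plan is to prove the two implications separately, with the forward (``only if'') direction being a routine computation and the backward (``if'') direction being where the AP is used. For the ``only if'' direction, I would first verify the claim on the dense $\ast$-subalgebra. Under the identification set up in the previous subsection, a typical element of $C\rtimes_{{\rm alg}, {}^\fu\alpha, {\rm d}\fu}\Lambda$ maps to a finite sum $\sum_{s\in\Lambda} c_s\fv_s=\sum_{s\in\Lambda}c_s\fu_s s$ with $c_s\in C$, where $\fv_s:=\fu_s\cdot s$. A direct evaluation gives $\E_t\!\left(\sum_s c_s\fu_s s\right)=c_t\fu_t$ for $t\in\Lambda$ and $0$ for $t\in\Gamma\setminus\Lambda$, so the asserted coefficient conditions hold on the algebraic level. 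Since each $\E_t$ is norm-continuous and $C\fu_t$ is norm-closed (being the image of the closed set $C$ under right multiplication by the unitary $\fu_t\in\cM(A)^{\rm u}$, an isometry), these conditions persist for every $x$ in the closure $C\rca{{}^\fu\alpha, {\rm d}\fu}\Lambda$.

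For the ``if'' direction, suppose $x\in A\rca{\alpha}\Gamma$ satisfies $\E_s(x)\in C\fu_s$ for $s\in\Lambda$ and $\E_s(x)=0$ for $s\in\Gamma\setminus\Lambda$. First I would reduce to the case $\Gamma=\Lambda$: the canonical conditional expectation $A\rca{\alpha}\Gamma\to A\rca{\alpha}\Lambda$ associated with the subgroup $\Lambda$ preserves the coefficients $\E_s$ for $s\in\Lambda$ and annihilates the remaining ones, so $x$ and its image have identical coefficient systems; since the family $\{\E_s\}_{s\in\Gamma}$ separates points of $A\rca{\alpha}\Gamma$, this forces $x\in A\rca{\alpha}\Lambda$. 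Now I invoke the AP of $\Lambda$: it supplies a net $(\phi_i)$ of finitely supported functions on $\Lambda$ whose associated Herz--Schur multipliers $m_{\phi_i}$ on $A\rca{\alpha}\Lambda$ are uniformly completely bounded and converge to the identity in the point-norm topology. Because $m_{\phi_i}$ acts diagonally on coefficients, $\E_s(m_{\phi_i}(x))=\phi_i(s)\E_s(x)$, and finite support of $\phi_i$ makes $m_{\phi_i}(x)$ a finite Fourier sum $\sum_{s\in\Lambda}\phi_i(s)\E_s(x)\,s$. Writing $\E_s(x)=c_s\fu_s$ with $c_s\in C$, each summand equals $\phi_i(s)c_s\fv_s$, whence $m_{\phi_i}(x)\in C\rca{{}^\fu\alpha, {\rm d}\fu}\Lambda$. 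Letting $i$ increase and using that this subalgebra is norm-closed yields $x\in C\rca{{}^\fu\alpha, {\rm d}\fu}\Lambda$.

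I expect the main obstacle to be the AP step: extracting from the approximation property a net of \emph{finitely supported} multipliers that converges to the identity in the point-norm (rather than merely point-weak-$\ast$) topology on the reduced crossed product, and arranging the finite support so that each $m_{\phi_i}(x)$ genuinely lies in the algebraic twisted crossed product. This is precisely the role played by the AP in the analogous arguments of \cite{Suz17} and \cite{SuzCMP}, and I would follow that template. The remaining ingredients---that the finite Fourier sums land in the subalgebra via the hypothesis $\E_s(x)\in C\fu_s$, that the coefficient maps $\{\E_s\}$ separate points, and that the subgroup conditional expectation exists---are standard structural facts about reduced crossed products and require no further hypotheses.
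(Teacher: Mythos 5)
Your proposal is correct and takes essentially the same route as the paper: the paper's proof of this lemma is a one-line citation to the proof of Proposition 3.4 in \cite{Suz17}, which is precisely the argument you sketch --- reduce to $A \rca{\alpha}\Lambda$ via the subgroup conditional expectation and point-separation of the coefficient maps, then use the AP to produce finitely supported multipliers $m_{\phi_i}$ converging to the identity in the point-norm topology, note that they act diagonally on coefficients so that each $m_{\phi_i}(x)$ is a finite sum $\sum_s \phi_i(s)c_s\fv_s$ lying in the (norm-closed) subalgebra $C\rca{{}^\fu\alpha,{\rm d}\fu}\Lambda$, and pass to the limit. One small correction that does not affect the argument: the AP does \emph{not} supply a uniformly completely bounded net of multipliers (that would be weak amenability, which is strictly stronger); it supplies only the point-norm convergence, and since your proof uses nothing beyond point-norm convergence and finite support, it goes through as written.
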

\begin{proof}
This follows from the proof of \cite{Suz17}, Proposition 3.4.
\end{proof}

We note that the statement fails for non-exact groups,
even in the context of the crossed product splitting theorem (see Proposition 2.7 in \cite{SuzCMP}).
It would be a deep question if the statement could still be true for some/all exact groups without the AP (e.g., $\SL(3, \IZ)$ \cite{LS}).

In the von Neumann algebra case, thanks to the von Neumann bicommutant theorem,
the analogous statement holds for all discrete groups. See Lemma \ref{Lem:coeffv}.

For a \Cs-algebra $A$ and a set $S$, we set
\[\ell^2(S, A):=\left\{(a_s)_{s\in S}\in A^S\colon \sum_{s\in S} a_s^\ast a_s ~\text{converges in norm}\right\}.\]
We regard it as a \Cs-correspondence over $A$ in the standard way.

Now we recall the definition of central freeness from \cite{SuzCMP}.
\begin{Def}[Central freeness of inclusions for automorphisms and actions]\label{Def:cf}
Let $B \subset A$ be a non-degenerate \Cs-algebra inclusion.
Let $\alpha$ be an automorphism of $A$ with $\alpha(B)=B$.
We say that $\alpha$ is \emph{centrally free for} $B\subset A$ if it satisfies the following condition:
For any $\epsilon>0$ and any $a, b \in A$, there is $\xi\in (\ell^2(\IN, B))_1$ with
\[\sum_{n\in \IN} \xi_n^\ast a \xi_n \approx_\epsilon a, \quad \sum_{n\in \IN} \xi_n^\ast b \alpha(\xi_n) \approx_\epsilon 0.\]

A $\Gamma$-\Cs-algebra inclusion $B \subset (A, \alpha)$ is said to be \emph{centrally $\Gamma$-free} \cite{SuzCMP} if
the automorphisms $\alpha_s$; $s\in \Gamma \setminus \{e\}$, are centrally free for $B\subset A$.
\end{Def}
We note that when $B=A$, central freeness of $\alpha \in {\rm Aut}(A)$
is weaker than the Rohlin type properties \cite{Nak}, \cite{IzuR} and
the isometric shift-absorption \cite{GS2}.

\subsection{Irreducibility and \Cs-irreducibility for operator algebra inclusions}
Recall that a non-degenerate inclusion $B \subset A$ of \Cs-algebras
is called \emph{irreducible} if the relative commutant $B' \cap \cM(A)$ is equal to the center of $\cM(A)$.
For subfactors $N\subset M$,
this condition is equivalent to the factoriality of all members of $\Int{N}{M}$.
Hence irreducibility plays the role of the simplicity in subfactor theory.

For \Cs-algebras, as the triviality of the center is rather moderate condition,
the above irreducibility condition is not sufficient for many purposes.
Recently R{\o}rdam \cite{RorIrr} introduced
the following stronger version of irreducibility.
A \Cs-algebra inclusion $B\subset A$ is said to be \emph{\Cs-irreducible}
if all members of $\Int{B}{A}$ are simple.
As many evidences are investigated in \cite{RorIrr},
\Cs-irreducibility will be the right notion of simplicity for inclusions of simple \Cs-algebras.
For basic examples of \Cs-irreducible inclusions, see \cite{RorIrr} and \cite{Mu} for instance.

\section{Proof of Theorem \ref{Thmint:Cs}}\label{section:proof}
We split the main parts of the proof into a few lemmas.
The first lemma is the point where central freeness plays a fundamental role (cf.~\cite{SuzCMP} for related applications).
\begin{Lem}\label{Lem:cf}
Assume that $B \subset (A, \alpha)$ is centrally $\Gamma$-free.
Then, for any $s\in \Gamma$ and any 
$C\in \Int{B}{A \rca{\alpha} \Gamma}$,
one has $\E_s(C) s \subset C$.
\end{Lem}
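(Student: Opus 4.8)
The plan is to isolate the $s$-th Fourier coefficient of an element of $C$ by an averaging over $B$ that exploits central freeness. For $\xi=(\xi_n)_{n\in\IN}\in(\ell^2(\IN,B))_1$ I would define
\[\Psi_\xi(x):=\sum_{n\in\IN}\xi_n^\ast\, x\, \alpha_{s^{-1}}(\xi_n),\qquad x\in A\rca{\alpha}\Gamma.\]
Since $\xi\in(\ell^2(\IN,B))_1$, the series converges in norm (a Cauchy--Schwarz estimate for the $B$-valued inner product bounds the tails by $\|x\|$ times the tails of $\sum_n\xi_n^\ast\xi_n$), and the same estimate shows $\Psi_\xi$ is a contraction. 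Crucially, because $B$ is $\Gamma$-invariant one has $\xi_n\in B\subset C$ and $\alpha_{s^{-1}}(\xi_n)\in B\subset C$, so every partial sum lies in $C$; as $C$ is closed, $\Psi_\xi(x)\in C$ for all $x\in C$. Thus it will suffice to choose $\xi$ so that $\Psi_\xi(x)\approx \E_s(x)\,s$.

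First I would treat finitely supported $x=\sum_{t\in F}a_t\,t$ (with $a_t=\E_t(x)\in A$). The relation $t\,\alpha_{s^{-1}}(\xi_n)=\alpha_{ts^{-1}}(\xi_n)\,t$ gives
\[\E_t(\Psi_\xi(x))=\sum_{n\in\IN}\xi_n^\ast\,a_t\,\alpha_{ts^{-1}}(\xi_n)\qquad(t\in F).\]
The twist $\alpha_{s^{-1}}$ is chosen precisely so that the $t=s$ term becomes the recovery sum $\sum_n\xi_n^\ast a_s\xi_n$, whereas for $t\neq s$ one has $ts^{-1}\neq e$. Central freeness of the automorphisms $\alpha_{ts^{-1}}$ ($t\in F\setminus\{s\}$) then makes $\E_t(\Psi_\xi(x))\approx 0$ for $t\neq s$, while the accompanying recovery condition makes $\E_s(\Psi_\xi(x))\approx a_s=\E_s(x)$. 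Hence $\Psi_\xi(x)\approx \E_s(x)\,s$, and since $\Psi_\xi(x)\in C$ this settles the finitely supported case.

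For arbitrary $x\in C$ I would argue by approximation. Given $\epsilon>0$, choose a finitely supported $x'=\sum_{t\in F}a'_t\,t\in A\ac\Gamma$ (not necessarily in $C$) with $\|x-x'\|<\epsilon$ and $s\in F$. Applying the simultaneous central freeness discussed below with tolerance $\epsilon/|F|$ yields a single $\xi\in(\ell^2(\IN,B))_1$ with $\|\E_t(\Psi_\xi(x'))\|<\epsilon/|F|$ for each $t\in F\setminus\{s\}$ and $\E_s(\Psi_\xi(x'))\approx_{\epsilon/|F|}a'_s$, so that $\Psi_\xi(x')\approx_\epsilon a'_s\,s=\E_s(x')\,s$. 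Using the contractivity of $\Psi_\xi$ and of $\E_s$ (and that right multiplication by the unitary $s$ is isometric),
\[\Psi_\xi(x)\approx_\epsilon\Psi_\xi(x')\approx_\epsilon\E_s(x')\,s\approx_\epsilon\E_s(x)\,s,\]
whence $\E_s(x)\,s\in_{3\epsilon}C$; letting $\epsilon\to0$ and using that $C$ is closed gives $\E_s(x)\,s\in C$. The $\epsilon/|F|$ bookkeeping is what prevents the growing number of nonzero coefficients of $x'$ from spoiling the estimate as $\epsilon\to0$.

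The hard part will be the \emph{simultaneous} form of central freeness: Definition \ref{Def:cf} supplies a witnessing vector $\xi$ for a single automorphism and a single pair of elements, whereas the argument needs one $\xi$ that at once realizes the recovery condition for $a'_s$ and kills $a'_t\,\alpha_{ts^{-1}}(\,\cdot\,)$ for all $t\in F\setminus\{s\}$. I would obtain this by iterating Definition \ref{Def:cf}, composing the witnessing vectors for the finitely many automorphisms $\{\alpha_{ts^{-1}}\}_{t\in F\setminus\{s\}}$ through a reindexing $\ell^2(\IN,B)\cong\ell^2(\IN\times\IN,B)$ so that each successive averaging preserves, up to $\epsilon$, the conditions already secured. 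This amplification is routine but is the only delicate point, and it mirrors the corresponding step in \cite{SuzCMP}.
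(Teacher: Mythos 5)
Your proposal is correct and is essentially the paper's own argument: the same averaging map $x \mapsto \sum_{n} \xi_n^\ast\, x\, \alpha_s^{-1}(\xi_n)$ with the $\alpha_s^{-1}$-twist, the same coefficientwise application of central freeness of $\alpha_{ts^{-1}}$ for $t \neq s$ together with the recovery condition at $t=s$, and the same norm-approximation bookkeeping at the end. The \emph{simultaneous} form of central freeness that you flag as the delicate point is handled in the paper by exactly the iteration you sketch --- killing the finitely many offending coefficients one at a time with tolerance $\epsilon/k$ and composing the witnessing vectors via a reindexing $\ell^2(\IN^k, B)$ and the Cauchy--Schwarz inequality for Hilbert \Cs-modules --- so there is no gap.
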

\begin{proof}
Let $C\in \Int{B}{A \rca{\alpha} \Gamma}$.
Let $c\in C$ and $s\in \Gamma$ be given.
We need to show that $\E_s(c)s\in C$.
Since $C$ is closed, it suffices to show the next statement:
For any $\epsilon>0$, $\E_s(c)s\in_{4\epsilon} C$.
We show this claim. 

Take any positive number $\epsilon$.
Choose $\tilde{c}_0 \in A\rtimes_{{\rm alg}, \alpha} \Gamma$ with $\tilde{c}_0 \approx_\epsilon c$.
We write
\[\tilde{c}_0=\sum_{t\in F} a^{(0)}_t ts,\]
where $F=\{e, s_1, \ldots, s_k\} \subset \Lambda$ is a finite subset (with $|F|=k+1$) and $a^{(0)}_t\in A$ for each $t\in F$.

In the case $k=0$, clearly we have $\E_s(c) s \approx_{2\epsilon} c\in C$ and hence the statement holds.
Assume that $k\geq 1$.
Since $B \subset (A, \alpha)$ is centrally $\Gamma$-free, one can find $\xi^{(1)} \in (\ell^2(\IN, B))_1$ with
\[\sum_{n\in \IN} (\xi_n^{(1)})^\ast a^{(0)}_e \xi_n^{(1)} \approx_{\epsilon/k} a^{(0)}_e,\quad \sum_{n\in \IN} (\xi_n^{(1)})^\ast a^{(0)}_{s_1} \alpha_{s_1}(\xi_n^{(1)}) \approx_{\epsilon/k} 0.\]
The second relation implies that
\[\sum_{n\in \IN} (\xi_n^{(1)})^\ast \tilde{c}_0 \alpha_{s}^{-1}(\xi_n^{(1)}) \approx_{\epsilon/k} \sum_{t\in F\setminus\{s_1\}} a^{(1)}_t ts =:\tilde{c}_1,\]
where
\[a^{(1)}_t:=\sum_{n\in \IN} (\xi_n^{(1)})^\ast a^{(0)}_t \alpha_{t}(\xi_n^{(1)})\in A \quad {\rm~for~}t\in F\setminus \{s_1\}.\]
Note that $a^{(1)}_e \approx_{\epsilon/k} a^{(0)}_e$ by the first relation on $\xi^{(1)}$.

When $k=1$, we finish the process.
Otherwise, we next apply the same argument to $\tilde{c}_1$ and $s_2$ instead of $\tilde{c}_0$ and $s_1$.
Then we obtain $\xi^{(2)}\in (\ell^2(\IN, B))_1$ with
\[ \sum_{n\in \IN}( \xi_n^{(2)})^\ast \tilde{c}_1 \alpha_{s}^{-1}(\xi_n^{(2)}) \approx_{\epsilon/k} \sum_{t\in F\setminus \{s_1, s_2\} } a^{(2)}_{t} ts=:\tilde{c}_2\]
for some $a^{(2)}_t \in A$; $t\in F\setminus\{s_1, s_2\}$, with $a^{(2)}_e \approx_{\epsilon/k} a^{(1)}_e$.
By the Cauchy--Schwarz inequality for right Hilbert \Cs-modules, the function $\zeta \in (\ell^2(\IN^2, B))_1$ given by
$\zeta_{n, m}:=\xi_n^{(1)}\xi_m^{(2)}$ satisfies
\[\sum_{n, m\in \IN} \zeta_{n, m}^\ast \tilde{c}_0 \alpha_s^{-1}(\zeta_{n, m}) \approx_{2\epsilon/k} \tilde{c}_2.\]
We iterate this process $k$ times. Then, at the end,
we obtain $\xi \in (\ell^2(\IN^k, B))_1$ and $a^{(k)}_e\in A$
with 
\[\sum_{\n\in \IN^k} \xi_\n^\ast\tilde{c}_0 \alpha_s^{-1}(\xi_\n) \approx_\epsilon a^{(k)}_e s \approx_\epsilon a^{(0)}_e s \approx_\epsilon \E_s(c)s.\]
The Cauchy--Schwarz inequality implies
\[\sum_{\n\in \IN^k} \xi_\n^\ast \tilde{c}_0 \alpha_s^{-1}(\xi_\n) \approx_\epsilon \sum_{\n\in \IN^k} \xi_\n^\ast c \alpha_s^{-1}(\xi_\n).\]
Since $B \subset C$ and $c\in C$, the last element is contained in $C$.
Hence $\E_s(c)s\in_{4\epsilon} C$.
\end{proof}
By applying Lemma \ref{Lem:cf} to $s=e$, we obtain the \Cs-irreducibility result.
(For this result, we do not need the AP of the acting group.)
\begin{Cor}\label{Cor:Cirr}
Let $B \subset (A, \alpha)$ be a \Cs-irreducible and centrally $\Gamma$-free inclusion of $\Gamma$-\Cs-algebras.
Then the inclusion $B \subset A\rca{\alpha}\Gamma$ is \Cs-irreducible.
\end{Cor}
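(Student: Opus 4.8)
The plan is to show directly that an arbitrary $C\in \Int{B}{A\rca{\alpha}\Gamma}$ is simple; since $C$ was arbitrary this is exactly \Cs-irreducibility of $B\subset A\rca{\alpha}\Gamma$. First I would record the two structural inputs. Setting $D:=C\cap A$, one has $B\subset D\subset A$, so $D\in \Int{B}{A}$; as $B\subset A$ is \Cs-irreducible, $D$ is simple. Applying Lemma \ref{Lem:cf} with $s=e$ gives $\E(C)\subset C$, and since $\E$ restricts to the identity on $A$ one gets $\E(C)=C\cap A=D$, so that $\E|_C\colon C\rightarrow D$ is a faithful conditional expectation.

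Next I would take an arbitrary nonzero closed two-sided ideal $I\trianglelefteq C$ and aim to prove $I=C$. Choosing $0\neq x\in I$ with $x\geq 0$, faithfulness of $\E$ yields $\E(x)\neq 0$ in $D$. The crucial point is to show that in fact $\E(x)\in I$: then $I\cap D$ is a nonzero ideal of the simple algebra $D$, forcing $I\cap D=D$; as the inclusion is unital this gives $1\in I$, hence $I=C$ and $C$ is simple.

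To place $\E(x)$ inside $I$, I would revisit the averaging in the proof of Lemma \ref{Lem:cf} specialized to $s=e$: approximating $x$ by a finitely supported $\tilde x=\sum_{t\in F}a_t t$ with $a_e\approx_\epsilon \E(x)$ and invoking central $\Gamma$-freeness to annihilate the off-diagonal coefficients, one produces $\xi\in (\ell^2(\IN^k,B))_1$ with $\sum_{\n}\xi_\n^\ast x\xi_\n\approx \E(x)$. The decisive observation is that each $\xi_\n$ lies in $B\subset C$, so every term $\xi_\n^\ast x\xi_\n$ lies in the ideal $I$; since the sum converges in norm and $I$ is closed, the limit lies in $I$. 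Letting $\epsilon\to 0$ gives $\E(x)\in I$, as required.

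I expect this extraction step to be the only substantive part, and hence the main obstacle: it is precisely here that central $\Gamma$-freeness is used, rather than the bare conclusion $\E(C)\subset C$, and the point that makes it work is that the averaging coefficients can be chosen in $B$, hence in $C$, so the averages never leave the ideal $I$. The remaining ingredients — simplicity of $D$ and the passage from $1\in I$ to $I=C$ — are routine, and, as the statement already signals, no hypothesis on $\Gamma$ (in particular no AP) is needed, since the AP entered only through Lemma \ref{Lem:AP}, which plays no role here.
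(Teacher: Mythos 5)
Your proposal is correct and is essentially the paper's own argument: the paper's one-line proof (``apply Lemma \ref{Lem:cf} to $s=e$'') compresses precisely what you spell out, namely re-running the averaging from the proof of that lemma with coefficients $\xi_\n\in B\subset C$, so that a closed ideal $I\trianglelefteq C$ absorbs the averages and hence contains $\E(x)\neq 0$, after which simplicity of $C\cap A$ (from \Cs-irreducibility of $B\subset A$) finishes. Your observations that the statement of Lemma \ref{Lem:cf} alone would not suffice (one needs its proof, i.e.\ that the averaging coefficients lie in $B$) and that the AP plays no role here are both exactly right.
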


The next lemma explains the role of the new tensor component $\mathcal{O}_2$.
As we see in Section \ref{section:O2}, due to obstructions on K$_0$-groups, in general this operation is really necessary to obtain the clean splitting.
(However see also Remark \ref{Rem:partial} for a partial splitting result before applying $\mathcal{O}_2\otimes -$.)

We say a \Cs-algebra inclusion $B \subset A$ is \emph{$\mathcal{O}_2$-absorbing} if
all members $C$ of $\Int{B}{A}$ satisfy $C \cong \mathcal{O}_2 \otimes C$.
\begin{Lem}\label{Lem:unitary}
Let $B \subset (A, \alpha)$ be a unital $\Gamma$-\Cs-algebra inclusion that is \Cs-irreducible, centrally $\Gamma$-free, and $\mathcal{O}_2$-absorbing.
Then, for any
$C\in \Int{B}{A \rca{\alpha} \Gamma}$ and any $s\in \Gamma$, one has
either $\E_s(C) = \{0\}$ or $\E_s(C) \cap A^{\rm u} \neq \emptyset$.
Moreover, in the latter case, for any $u\in \E_s(C) \cap A^{\rm u}$, one has
$\E_s(C)=(C\cap A) \cdot u$.
\end{Lem}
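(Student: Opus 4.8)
The plan is to analyze the subspace $\E_s(C) \subset A$ using the established machinery. By Lemma \ref{Lem:cf}, central freeness gives $\E_s(C)s \subset C$, and more fundamentally, for any $c \in C$ we have $\E_s(c)s \in C$. The first step is to understand the algebraic structure of $\E_s(C)$. Since $C$ is a \Cs-algebra containing $B$, and $B \subset A$ is \Cs-irreducible (hence $B' \cap A = \IC$), I expect $\E_s(C)$ to behave like a partial-isometry-spanned bimodule over $D := C \cap A$. The key observation is that $\E_e(C) = D$ is the intermediate \Cs-algebra $C \cap A$, which by Corollary \ref{Cor:Cirr} sits in a \Cs-irreducible inclusion $B \subset D$, so $D$ is simple and $\mathcal{O}_2$-absorbing.

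\textbf{From a nonzero coefficient to a unitary.} Suppose $\E_s(C) \neq \{0\}$. First I would show $\E_s(C)$ is a closed $D$-$D$ bimodule: for $d_1, d_2 \in D$ and $c \in C$, the element $d_1 c d_2 \in C$ satisfies $\E_s(d_1 c d_2) = d_1 \E_s(c) \alpha_s(d_2)$ when $d_2$ commutes appropriately, so more carefully $\E_s(C)$ is a left $D$-module and a right $\alpha_s(D)$-module. Then I would use the structure of $\E_s(C)^\ast \E_s(C)$ and $\E_s(C)\E_s(C)^\ast$: these are ideals (or hereditary subalgebras) of $D$, and since $D$ is simple they are all of $D$ (using that $D$ is unital with the same unit $1_A$). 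The crucial step is to produce a \emph{unitary} in $\E_s(C)$, not merely a partial isometry. This is precisely where $\mathcal{O}_2$-absorption enters: in an $\mathcal{O}_2$-absorbing simple \Cs-algebra, any nonzero element generating a full hereditary subalgebra can be ``filled out'' to a unitary, because the $\KK$-theoretic obstruction vanishes. I would take an element $v \in \E_s(C)$ that is (up to approximation) a partial isometry with full support and range projections in $D$, and then invoke properties of $\mathcal{O}_2 \otimes D \cong D$ to find an actual unitary $u \in \E_s(C) \cap A^{\rm u}$.

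\textbf{Identifying the bimodule.} Once a unitary $u \in \E_s(C) \cap A^{\rm u}$ is secured, the remaining claim $\E_s(C) = D \cdot u$ follows from a clean bimodule argument. For any $w \in \E_s(C)$, I would consider $w u^\ast \in A$ and show it lies in $D = C \cap A$: indeed $wu^\ast \in A$ by construction, and I would verify $wu^\ast \in C$ by realizing it through the formula $\E_s(c)\E_s(c')^\ast$-type expressions combined with Lemma \ref{Lem:cf}, which lets me move coefficients back into $C$. Concretely, both $w s$ and $us$ lie in $C$ (by Lemma \ref{Lem:cf}), so $ws(us)^\ast = wu^\ast \in C \cap A = D$, giving $w = (wu^\ast)u \in Du$. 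The reverse inclusion $Du \subset \E_s(C)$ holds because $Du \cdot s \subset C$ and $\E_s(Du \cdot s) = Du$.

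\textbf{Anticipated main obstacle.} The hard part will be the passage from a full partial isometry to a genuine unitary inside the \emph{subspace} $\E_s(C)$, rather than just somewhere in $A$. The bimodule $\E_s(C)$ is not an algebra, so I cannot directly apply functional calculus or the usual $\mathcal{O}_2$ absorption tricks to it; I must work with the associated hereditary subalgebras $\E_s(C)^\ast\E_s(C)$ and $\E_s(C)\E_s(C)^\ast$ and carefully track that the unitary produced actually lands back in the original subspace. I expect this to require exploiting that $D$ is unital with unit $1_A$ together with the stable uniqueness of unitaries in $\mathcal{O}_2$-absorbing algebras (the $[1_D]_0$ obstruction that Theorem C shows is genuinely present when $\mathcal{O}_2$ is \emph{not} adjoined). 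Managing the approximation — since $\E_s(C)$ is only norm-closed and the partial isometry is obtained as a limit — while preserving membership in $\E_s(C) \cap A^{\rm u}$ will be the delicate technical core of the argument.
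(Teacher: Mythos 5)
Your overall strategy --- exploit the $D$--$\alpha_s(D)$ bimodule structure of $\E_s(C)$ (with $D := C\cap A$), use pure infiniteness and vanishing ${\rm K}_0$ coming from $\mathcal{O}_2$-absorption to manufacture a unitary, then run the $wu^\ast = ws(us)^\ast \in C\cap A$ argument --- is exactly the paper's strategy, and your paragraph ``Identifying the bimodule'' is essentially verbatim the paper's proof of the last statement. However, the pivotal step, producing a unitary inside $\E_s(C)$, is only announced, not proved: you say you would take ``up to approximation'' a partial isometry with full support and range projections and then ``invoke properties of $\mathcal{O}_2\otimes D\cong D$,'' and you yourself flag the passage from such a partial isometry to a genuine unitary as an unresolved technical core. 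As written this is a real gap, and your diagnosis of where the difficulty lies is misplaced: no approximation, no limits of partial isometries, and no stable-uniqueness machinery are needed. There is also a small slip: the support projection of an element of $\E_s(C)$ lives in $\alpha_s(D)$, not in $D$, as your own computation of the right module structure already shows.

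The paper closes the gap in two purely algebraic moves. Pick $c\in C$ with $\E_s(c)\neq 0$; then $\E_s(c)s\in C$ by Lemma \ref{Lem:cf}, so $a:=\E_s(c)\E_s(c)^\ast \in D$, and since $D\in \Int{B}{A}$ is purely infinite simple (by \Cs-irreducibility and $\mathcal{O}_2$-absorption), there is $x\in D$ with $xax^\ast =1$; thus $x\E_s(c)\in \E_s(C)$ is a co-isometry. Its support projection $p:=\E_s(c)^\ast x^\ast x\E_s(c)$ lies in $\alpha_s(D)$, because $(x\E_s(c)s)^\ast(x\E_s(c)s)=\alpha_s^{-1}(p)\in C\cap A=D$. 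Since $\mathcal{O}_2$-absorption forces ${\rm K}_0(\alpha_s(D))=0$, Cuntz's theorem \cite{Cun} yields an isometry $v\in \alpha_s(D)$ with $vv^\ast =p$, and a direct check shows $u:=x\E_s(c)v$ is a unitary; it remains in $\E_s(C)$ because $x\E_s(c)vs = x\E_s(c)s\,\alpha_s^{-1}(v)\in C$ --- precisely the one-sided module property you identified. So your plan is completable along the lines you gesture at, but as submitted the central claim is unproven, and the delicate approximation argument you anticipate (tracking unitarity through norm limits in the closed subspace $\E_s(C)$) never arises.
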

\begin{proof}
Let $s \in \Gamma$ be an element with $\E_s(C) \neq \0$.
We first show that $\E_s(C)\cap A^{\rm u} \neq \emptyset$.
Take $c\in C$ with $\E_s(c) \neq 0$.
By Lemma \ref{Lem:cf}, one has $\E_s(c) s \in C$. Hence $\E_s(c) \E_s(c)^\ast \in C \cap A$.
By the assumptions on $B \subset A$, every $D\in \Int{B}{A}$ is purely infinite simple with K$_0(D)=0$.
Hence one can find $x\in C \cap A$ satisfying
\[x \E_s(c)\E_s(c)^\ast x^\ast =1.\]
This implies that $p:=\E_s(c)^\ast x^\ast x \E_s(c)$ is a projection.
Because $x\E_s(c)s\in C$, one has $p\in \alpha_s(C \cap A)$.
By \cite{Cun},
one can find $v\in \alpha_s(C \cap A)$
with $v^\ast v=1$, $vv^\ast =p$.
Then one has $x \E_s(c)v s =x \E_s(c) s \alpha_{s}^{-1}(v) \in C$.
Hence $x \E_s(c)v \in \E_s(C)\cap A^{\rm u}$.

For the last statement, take any $u\in \E_s(C) \cap A^{\rm u}$.
Then for any $x\in \E_s(C)$, as $xs, us\in C$ by Lemma \ref{Lem:cf}, one has $xu^\ast \in C \cap A$.
Hence $x=(xu^\ast)\cdot u\in (C \cap A) \cdot u$.
The reverse inclusion is trivial, because $\E_s$ is left $A$-linear.
\end{proof}
Now combining the lemmas, we complete the proof in the unital case.
\begin{proof}[Proof of Theorem \ref{Thmint:Cs}]
Clearly it suffices to show the statement under the additional assumption that the inclusion $B\subset (A, \alpha)$ is $\mathcal{O}_2$-absorbing.
We put
\[D:=C \cap A, \quad \Lambda:=\{s\in \Gamma : \E_s(C) \neq \0\}.\]
Lemmas \ref{Lem:cf} and \ref{Lem:unitary} immediately imply that $\Lambda$ forms a subgroup of $\Gamma$.
Moreover these lemmas allow us to choose a map $\fu \colon \Lambda \rightarrow C^{\rm u}$ with
\[\E_s(C)s=D \fu_s s = C \cap (As) \quad{\rm~ for~ all~} s\in \Lambda.\]
These conditions imply that
$\fu$ is $(D, \Lambda)$-respecting.
It is now clear from Lemma \ref{Lem:AP} that
\[C=D \rca{{}^\fu \alpha, {\rm d}\fu} \Lambda.\]
\end{proof}
\begin{Rem}
Clearly, for the conclusion of Theorem \ref{Thmint:Cs}, we only need to assume
the central $\Gamma$-freeness of the tensor product inclusion $\mathcal{O}_2 \otimes B \subset (\mathcal{O}_2 \otimes A, \id_{\mathcal{O}_2} \otimes \alpha)$
rather than that of the original inclusion.
The former condition is often easier to check, thanks to classification theory of \Cs-dynamical systems.
For instance, when $A$ is simple, serapable, and nuclear, for any pointwise outer action $\alpha \colon \Gamma \acts A$,
the trivial inclusion $\mathcal{O}_2 \otimes A \subset (\mathcal{O}_2 \otimes A, \id_{\mathcal{O}_2} \otimes \alpha)$
is centrally $\Gamma$-free
by the Rohlin type results for cyclic groups \cite{Nak} and \cite{IzuR}, or by Proposition 3.15 of \cite{GS2}.

In particular, for any such $(A, \alpha)$ and any non-degenerate \Cs-irreducible $\Gamma$-\Cs-algebra inclusion $C\subset (B, \beta)$, Theorem \ref{Thmint:Cs} holds for $A\otimes C \subset (A\otimes B, \alpha \otimes \beta)$.
(See Theorem \ref{Thm:generalCs} for the non-unital case.)
\end{Rem}
\begin{Rem}\label{Rem:partial}
Under the same assumptions as those of Theorem \ref{Thmint:Cs},
without adding the tensor component $\mathcal{O}_2$, we still have the following crossed product-like splitting of intermediate \Cs-algebras as follows.
For $C\in \Int{B}{A \rca{\alpha} \Gamma}$,
set
\[\Lambda:=\{s\in \Gamma: \E_s(C) \neq \{0\}\}.\]
Because the subset $\Lambda$ is not changed after taking the tensor product with $\mathcal{O}_2$,
it also forms a subgroup of $\Gamma$.
(This can also be shown directly by using the \Cs-irreducibility of $B\subset A$,
but we do not use this fact.)
For $s\in \Lambda$,
set $X_s:=\E_s(C)$. Then, as $X_s s=C \cap (A s)$ for $s\in \Lambda$ by Lemma \ref{Lem:cf},
each $X_s$ is a closed subspace of $A$.
The indexed family $\mathfrak{X}_C:=(X_s)_{s\in \Lambda}$ satisfies the axioms
\[X_s^\ast =\alpha_{s}^{-1}(X_{s^{-1}}), \quad X_s \alpha_s(X_t)\subset X_{st} \quad {\rm~for~all~}s, t\in \Lambda,\quad {\rm and}\quad B \subset X_e.\]
By the proof of Proposition 3.4 in \cite{Suz17}, one has
\[C=\{x\in A \rca{\alpha}\Lambda: \E_s(x)\in X_s \quad{\rm~for~}s\in \Lambda\}.\]
Conversely, assume that one has $\Lambda<\Gamma$ and an indexed family $\mathfrak{X}:=(X_s)_{s\in \Lambda}$
of nonzero closed subspaces of $A$
satisfying the above axioms.
Then this defines $C_\mathfrak{X} \in \Int{B}{A \rca{\alpha} \Gamma}$ by the same formula
\[C_\mathfrak{X}:=\{x\in A \rca{\alpha}\Lambda: \E_s(x)\in X_s \quad{\rm~for~}s\in \Lambda\}.\]
Again by the proof of Proposition 3.4 in \cite{Suz17}, one has
\[C_{\mathfrak{X}_C}=C, \quad \mathfrak{X}_{C_\mathfrak{X}}=\mathfrak{X}\]
for all these $C$ and $\mathfrak{X}$.
Thus the map $\Int{B}{A \rca{\alpha} \Gamma}\ni C \mapsto \mathfrak{X}_C$
gives a direct and complete classification of the intermediate \Cs-algebras by the indexed families $\mathfrak{X}$ of nonzero closed subspaces satisfying the axioms.
The \Cs-algebra $C_\mathfrak{X}$ may be seen as the reduced \Cs-algebra of a twisted version of \emph{partial actions} of $\Lambda$ (see e.g., \cite{Exe}),
but we do not discuss it further in this article.

This partial splitting still holds true without \Cs-irreducibility,
but in this case, the subset $\Lambda$ needs not form a subgroup.
\end{Rem}

\section{Twisted and/or non-unital case}\label{section:general}
In this section, we extend Theorem \ref{Thmint:Cs} to cocycle and/or non-unital actions.
Here we deduce the non-unital case from the unital cocycle case.
The proof for unital cocycle actions is essentially the same as the unital ordinary action case.
Therefore we only explain how to modify the original proof for this generalization.

First let us extend some notations for the generality in the present section.
Let $(\alpha, \fw) \colon \Gamma \acts A$ be a cocycle action.
Consider a non-degenerate \Cs-subalgebra $B \subset A$ and a subgroup $\Lambda<\Gamma$ satisfying
\[\alpha_s(B)=B ,\quad \fw_{s, t}\in \cM(B)^{\rm u} \quad {\rm~for~}s, t\in \Lambda.\]
Then $(\alpha, \fw)$ restricts to a cocycle action $\Lambda\acts B$.
(We write it by the same symbol $(\alpha, \fw)$ for short.)
We refer to $B$ as a \emph{$\Lambda$-\Cs-subalgebra} of $(A, \alpha, \fw)$.
When $B$ is a $\Gamma$-\Cs-subalgebra, we write $B \subset (A, \alpha, \fw)$.

We say that a $\Gamma$-\Cs-algebra inclusion $B \subset (A, \alpha, \fw)$ is \emph{centrally $\Gamma$-free} if
the automorphisms $\alpha_s$; $s\in \Gamma \setminus \{e\}$, are centrally free for $B \subset A$ (see Definition \ref{Def:cf}).

Let $C\subset A$ be a non-degenerate \Cs-subalgebra and $\Lambda<\Gamma$ be a subgroup.
We say that a map $\fu \colon \Lambda \rightarrow \cM(A)^{\rm u}$ is \emph{$(C, \Lambda)$-respecting}, if it satisfies
\[\fu_s\alpha_s(C) \fu_s^\ast =C, \quad \fu_s \alpha_s(\fu_t)\fw_{s, t} \fu_{st}^\ast \in \cM(C)^{\rm u} \quad {\rm~for~}s, t\in \Lambda.\]
For a $(C, \Lambda)$-respecting map $\fu$, the maps
\[{{}^{\fu}\alpha}\colon \Lambda \rightarrow \Aut(C), \quad {}^{\fu}\fw\colon \Lambda \times \Lambda \rightarrow \cM(C)^{\rm u}\]
given by
\[{{}^{\fu}\alpha}_s:=\ad(\fu_s)\circ \alpha_s|_{C},\quad {}^{\fu}\fw_{s, t}:= \fu_s \alpha_s(\fu_t)\fw_{s, t} \fu_{st}^\ast \in \cM(C)^{\rm u} \quad {\rm~for~}s, t\in \Lambda\]
define a cocycle action of $\Lambda$ on $C$.
Similar to the ordinary action case (see Section \ref{section:Prelim}), we identify
$C \rca{{{}^{\fu}\alpha}, {}^{\fu}\fw} \Lambda$ with a \Cs-subalgebra of $A \rca{\alpha, \fw} \Gamma$ in the canonical way.

For a cocycle action $(\alpha, \fw) \colon \Gamma \acts A$ and $B\subset (A, \alpha, \fw)$,
consider any map $\fu\colon \Gamma \rightarrow \cM(B)^{\rm u}$.
Then, for any $\Lambda< \Gamma$, any $C\in \Int{B}{A}$, and any $(C, \Lambda)$-respecting map $\fv$,
the inclusions
\[C\rca{{}^{\fv\fu^\ast}\hspace{-1mm}(^{\fu}\alpha), {}^{\fv\fu^\ast}\hspace{-1mm}({}^{\fu}\fw)}\Lambda \subset A \rca{{{}^{\fu}\alpha}, {}^{\fu}\fw} \Gamma \quad{\rm~and~}\quad
C\rca{{}^\fv \alpha, {}^\fv\fw}\Lambda \subset A \rca{\alpha, \fw}\Gamma\]
are isomorphic.
(Here we note that the pointwise multiplication $\fv\fu^\ast$ is $(C, \Lambda)$-respecting with respect to $({}^\fu \alpha, {}^\fu \fw) \colon \Gamma \acts A$,
and hence the notations make sense.)
Indeed the isomorphism $\theta \colon A \rca{\alpha, \fw}\Gamma \rightarrow A\rca{{{}^{\fu}\alpha}, {}^{\fu}\fw} \Gamma$
given by $\theta(as^{\fw})=a\fu_s^\ast s^{{}^{\fu}\fw}$ satisfies
\[\theta(C\rca{{}^\fv \alpha, {}^\fv\fw}\Lambda)=C\rca{{}^{\fv\fu^\ast}\hspace{-1mm}({{}^{\fu}\alpha}), {}^{\fv\fu^\ast}\hspace{-1mm}({}^{\fu}\fw)}\Lambda\]
for all triplets $C, \Lambda, \fv$.
We apply this observation with the following version of the stabilization trick (\cite{Sut}, \cite{PR}).
\begin{Lem}\label{Lem:stab}
Let $(\alpha, \fw) \colon \Gamma \acts A$ be a cocycle action
and let $B \subset (A, \alpha, \fw)$.
Then there is a map $\fu \colon \Gamma \rightarrow \cM(B\otimes \IK)^{\rm u}$ satisfying the relation ${}^{\fu}(\fw\otimes 1_{\IK})=1$.
\end{Lem}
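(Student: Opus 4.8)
The plan is to recognize Lemma~\ref{Lem:stab} as the classical stabilization trick for cocycle actions (\cite{Sut}, \cite{PR}) and to trivialize the inflated $2$-cocycle by an explicit generalized permutation unitary. First I would unwind the meaning of the conclusion. The ambient cocycle action here is $(\alpha\otimes\id_{\IK}, \fw\otimes 1_{\IK})\colon \Gamma\acts B\otimes\IK$, and for a map $\fu\colon \Gamma\rightarrow \cM(B\otimes\IK)^{\rm u}$ the perturbed $2$-cocycle is ${}^{\fu}(\fw\otimes 1_\IK)_{s,t}=\fu_s\,(\alpha_s\otimes\id_\IK)(\fu_t)\,(\fw_{s,t}\otimes 1_\IK)\,\fu_{st}^{\ast}$. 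Thus the relation ${}^{\fu}(\fw\otimes 1_\IK)=1$ is equivalent to $\fu_e=1$ together with
\[\fu_{st}=\fu_s\,(\alpha_s\otimes\id_\IK)(\fu_t)\,(\fw_{s,t}\otimes 1_\IK)\qquad(s,t\in\Gamma);\]
in other words, $\fu$ must exhibit $\fw\otimes 1_\IK$ as a coboundary for $(\alpha\otimes\id_\IK,\fw\otimes 1_\IK)$. So the task is purely to construct such $\fu$.

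For the construction I would realize $\IK$ as $\IK(\ell^2(\Gamma))$ (after fixing a bijection $\Gamma\cong\IN$ in the countable case, which is the relevant one) and fix a faithful nondegenerate representation of $B$ on a Hilbert space $\cH$, so that $\cM(B)\subset\IB(\cH)$ acts and $B\otimes\IK$ acts on $\cH\otimes\ell^2(\Gamma)$. Writing $(e_{p,q})_{p,q\in\Gamma}$ for the matrix units of $\IK(\ell^2(\Gamma))$, I would then set
\[\fu_s:=\sum_{p\in\Gamma}\fw_{s,p}^{\ast}\otimes e_{sp,p}\in\cM(B\otimes\IK(\ell^2(\Gamma)))^{\rm u}.\]
This is a unitary multiplier: each row and column carries a single entry, which is a unitary in $\cM(B)$, so $\fu_s^{\ast}\fu_s=\fu_s\fu_s^{\ast}=1$, and $\fu_e=\sum_p 1\otimes e_{p,p}=1$ since $\fw_{e,p}=1$. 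The key point is that the translation $p\mapsto sp$ is a \emph{left} action, chosen precisely so that the generalized permutations compose with indices closing up: $e_{sq,q}e_{tp,p}=\delta_{q,tp}\,e_{stp,p}$.

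Finally I would verify the displayed cocycle relation by a direct computation, which collapses to the $2$-cocycle identity. Composing and using $e_{sq,q}e_{tp,p}=\delta_{q,tp}e_{stp,p}$ gives
\[\fu_s\,(\alpha_s\otimes\id_\IK)(\fu_t)\,(\fw_{s,t}\otimes 1_\IK)=\sum_{p\in\Gamma}\fw_{s,tp}^{\ast}\,\alpha_s(\fw_{t,p}^{\ast})\,\fw_{s,t}\otimes e_{stp,p},\]
while $\fu_{st}=\sum_p\fw_{st,p}^{\ast}\otimes e_{stp,p}$, so the whole matter reduces to the scalar-per-entry identity $\fw_{s,tp}^{\ast}\,\alpha_s(\fw_{t,p}^{\ast})\,\fw_{s,t}=\fw_{st,p}^{\ast}$. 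Taking $r=s$ in the $2$-cocycle axiom $\alpha_r(\fw_{s,t})\fw_{r,st}=\fw_{r,s}\fw_{rs,t}$ (with the second and third arguments renamed $t,p$) yields $\alpha_s(\fw_{t,p})\fw_{s,tp}=\fw_{s,t}\fw_{st,p}$, whose adjoint is exactly the required identity. I expect the only delicate aspect to be bookkeeping rather than analysis: there are no convergence issues because each $\fu_s$ is a single generalized-permutation multiplier, and the entire algebraic content is carried by the cocycle identity. The one genuinely nontrivial choice is the left-translation convention above, which must be matched to the definition of ${}^{\fu}(\cdot)$; the passage from $\IK(\ell^2(\Gamma))$ to $\IK=\IK(\ell^2(\IN))$ is the harmless identification noted above.
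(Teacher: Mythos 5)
Your proposal is correct and is essentially the paper's proof: the paper simply cites the stabilization trick of Sutherland and Packer--Raeburn, and your unitary $\fu_s=\sum_{p\in\Gamma}\fw_{s,p}^{\ast}\otimes e_{sp,p}$ is precisely the formula (3.2) of \cite{PR} written out for discrete groups, with the verification collapsing to the $2$-cocycle identity exactly as you compute. You also correctly isolate the one point the paper's proof makes explicit, namely that $\fw_{s,t}\in\cM(B)^{\rm u}$ forces the construction to land in $\cM(B\otimes\IK)^{\rm u}$ rather than merely $\cM(A\otimes\IK)^{\rm u}$, and the identification of $\IK(\ell^2(\Gamma))$ with $\IK$ (tensoring with $\IK$ via $\IM_n\otimes\IK\cong\IK$ when $\Gamma$ is finite) is indeed harmless.
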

\begin{proof}
Note that the assumption implies $\fw_{s, t}\in \cM(B)^{\rm u}$ for $s, t\in \Gamma$.
Thus the proofs of the stabilization trick in \cite{Sut}, \cite{PR} give the desired map; see the formula (3.2) (and Lemma 3.6) in \cite{PR} for instance.
\end{proof}

By combining Lemmas \ref{Lem:AP} and \ref{Lem:stab} (and Lemma 5.2 of \cite{SuzCMP}),
we obtain the following generalization of Lemma \ref{Lem:AP}.

\begin{Lem}\label{Lem:AP2}
Let $(\alpha, \fw) \colon \Gamma \acts A$ be a cocycle action.
Let $C\subset A$ be a non-degenerate \Cs-subalgebra and $\Lambda<\Gamma$.
Let $\fu\colon \Lambda \rightarrow \cM(A)^{\rm u}$ be a $(C, \Lambda)$-respecting map.
Assume that $\Lambda$ has the AP.
Then $x\in A \rca{\alpha, \fw} \Gamma$ is contained in $C \rca{{{}^{\fu}\alpha}, {}^\fu \fw} \Lambda$ if and only if it satisfies
$\E_s(x)\in C \fu_s$ for all $s\in \Lambda$ and $\E_s(x)=0$ for all $s\in \Gamma \setminus \Lambda$.
\end{Lem}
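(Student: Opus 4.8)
The plan is to split the equivalence into its two implications, handle the (routine) forward direction by hand, and reduce the (substantive) converse to the untwisted Lemma~\ref{Lem:AP} via the stabilization trick of Lemma~\ref{Lem:stab}.

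For the forward implication, suppose $x\in C\rca{{}^\fu\alpha, {}^\fu\fw}\Lambda$. Then $x$ is a norm limit of finite sums $\sum_{s} c_s\fu_s s^{\fw}$ with $c_s\in C$ and support in $\Lambda$, on which the evaluation returns $c_s\fu_s\in C\fu_s$ for $s\in\Lambda$ and $0$ for $s\in\Gamma\setminus\Lambda$. Since each $\E_s$ is completely contractive and both $C\fu_s$ and $\{0\}$ are closed, the two coefficient conditions persist in the limit. This direction needs neither the AP nor the stabilization.

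For the converse I would first trivialize $\fw$ by stabilizing. Tensoring with $\IK$ gives the identification $(A\rca{\alpha, \fw}\Gamma)\otimes\IK\cong (A\otimes\IK)\rca{\alpha\otimes\id, \fw\otimes 1_\IK}\Gamma$, under which $\E_s\otimes\id$ becomes the coefficient map of the stabilized system and $(C\rca{{}^\fu\alpha, {}^\fu\fw}\Lambda)\otimes\IK$ becomes $(C\otimes\IK)\rca{{}^{\fu\otimes 1_\IK}(\alpha\otimes\id), {}^{\fu\otimes 1_\IK}(\fw\otimes 1_\IK)}\Lambda$ for the $(C\otimes\IK,\Lambda)$-respecting map $\fu\otimes 1_\IK$. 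Applying Lemma~\ref{Lem:stab} with $B=A$ produces $\fu_0\colon\Gamma\to\cM(A\otimes\IK)^{\rm u}$ with ${}^{\fu_0}(\fw\otimes 1_\IK)=1$, and I invoke the isomorphism $\theta$ recorded just before Lemma~\ref{Lem:stab}: it carries the stabilized twisted crossed product onto the genuine crossed product $(A\otimes\IK)\rca{{}^{\fu_0}(\alpha\otimes\id)}\Gamma$ and sends the intermediate subalgebra above to $(C\otimes\IK)\rca{{}^{\fv_0}({}^{\fu_0}(\alpha\otimes\id))}\Lambda$, where $\fv_0$ is the pointwise product $(\fu\otimes 1_\IK)\fu_0^\ast$ on $\Lambda$. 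Now, given $x$ satisfying the coefficient hypotheses, I would test the corner element $x\otimes e_{11}$. Its coefficients are $\E_s(x)\otimes e_{11}$, and the defining formula for $\theta$ gives $\E_s(\theta(x\otimes e_{11}))=(\E_s(x)\otimes e_{11})\fu_{0,s}^\ast$; hence $\E_s(x)\in C\fu_s$ forces $\E_s(\theta(x\otimes e_{11}))\in(C\otimes\IK)\fv_{0,s}$ for $s\in\Lambda$ and vanishing off $\Lambda$. As $\Lambda$ has the AP, Lemma~\ref{Lem:AP} applied to the genuine action ${}^{\fu_0}(\alpha\otimes\id)$ places $\theta(x\otimes e_{11})$ in $(C\otimes\IK)\rca{{}^{\fv_0}({}^{\fu_0}(\alpha\otimes\id))}\Lambda$. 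Pulling back through $\theta$ yields $x\otimes e_{11}\in(C\rca{{}^\fu\alpha, {}^\fu\fw}\Lambda)\otimes\IK$, and a final slice argument via Lemma~5.2 of \cite{SuzCMP} for the corner embedding $a\mapsto a\otimes e_{11}$ descends this to $x\in C\rca{{}^\fu\alpha, {}^\fu\fw}\Lambda$, as required.

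The main obstacle I anticipate is the bookkeeping in the third step: one must verify precisely that the coefficient maps transform as $\E_s(\cdot)\mapsto \E_s(\cdot)\fu_{0,s}^\ast$ under $\theta$, that the pointwise product $\fv_0=(\fu\otimes 1_\IK)\fu_0^\ast$ is indeed $(C\otimes\IK,\Lambda)$-respecting for the stabilized genuine action, and that the resulting cocycle action is the correct one to feed into Lemma~\ref{Lem:AP}. Once the identities relating $\fu$, $\fu_0$, and $\fv_0$ are fixed, the coefficient conditions translate verbatim and the statement collapses to the untwisted case already proved.
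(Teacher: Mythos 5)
Your proposal is correct and takes essentially the same route as the paper: the paper's proof of Lemma \ref{Lem:AP2} is exactly the one-line combination of the untwisted Lemma \ref{Lem:AP}, the stabilization trick of Lemma \ref{Lem:stab}, and the slice Lemma 5.2 of \cite{SuzCMP}, which is precisely the reduction you carry out. Your bookkeeping checks out as well --- the coefficient transformation $\E_s\circ\theta=(\,\cdot\,\fu_{0,s}^\ast)\circ\E_s$, the verification that $\fv_0=(\fu\otimes 1_{\IK})\fu_0^\ast$ is $(C\otimes\IK,\Lambda)$-respecting for the untwisted action ${}^{\fu_0}(\alpha\otimes\id)$ with ${\rm d}\fv_0=({}^{\fu}\fw)\otimes 1_\IK$, and the final compression by $1\otimes e_{11}$ --- so you have supplied exactly the details the paper leaves implicit.
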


We first extend Theorem \ref{Thmint:Cs} to unital cocycle actions.
\begin{Prop}\label{Prop:unital}
Let $\Gamma$ be a group with the AP.
Let $(\alpha, \fw) \colon \Gamma \acts A$ be a cocycle action on a unital \Cs-algebra.
Assume that $B \subset (A, \alpha, \fw)$ is \Cs-irreducible, centrally $\Gamma$-free, and $\mathcal{O}_2$-absorbing.
Then for any $C\in \Int{B}{A\rca{\alpha, \fw} \Gamma}$, there is $\Lambda< \Gamma$
and a $(C\cap A, \Lambda)$-respecting map $\fu \colon \Lambda \rightarrow A^{\rm u}$ with
\[C=(C\cap A)\rca{{}^\fu \alpha, {}^\fu \fw}\Lambda.\]
\end{Prop}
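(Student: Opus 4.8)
The plan is to rerun the proof of Theorem~\ref{Thmint:Cs} essentially verbatim, systematically replacing each group element $s$ by the twisted unitary $s^\fw$, and to check that the three ingredients of that argument survive the insertion of the $2$-cocycle $\fw$: the central freeness Lemma~\ref{Lem:cf}, the unitary extraction Lemma~\ref{Lem:unitary}, and the final assembly. The twisted coefficient characterization is already available as Lemma~\ref{Lem:AP2}, so the only genuine work is to reprove the first two lemmas in the cocycle setting and then repeat the bookkeeping from the proof of Theorem~\ref{Thmint:Cs}, now using the cocycle notions of Section~\ref{section:general}.

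First I would establish the cocycle analogue of Lemma~\ref{Lem:cf}, namely $\E_s(C)s^\fw \subset C$ for every $s \in \Gamma$ and $C \in \Int{B}{A\rca{\alpha, \fw}\Gamma}$. The induction is unchanged; the only new feature is that $\fw$ appears when one slides a central element across a twisted unitary. From $\alpha_t\circ \alpha_s = \ad(\fw_{t, s})\circ \alpha_{ts}$ one gets $\alpha_{ts}\circ \alpha_s^{-1} = \ad(\fw_{t, s}^\ast)\circ \alpha_t$, so for $\xi \in B$ and a monomial $a (ts)^\fw$ one computes
\[\xi^\ast \big(a (ts)^\fw\big)\alpha_s^{-1}(\xi) = \xi^\ast a \fw_{t, s}^\ast \alpha_t(\xi)\fw_{t, s}\,(ts)^\fw.\]
Hence, when killing the $s_i$-term, I would apply central freeness of $\alpha_{s_i}$ to the element $a^{(0)}_{s_i}\fw_{s_i, s}^\ast$ rather than to $a^{(0)}_{s_i}$; the trailing unitary $\fw_{s_i, s}$ is harmless, since right multiplication by a unitary is isometric, and the $e$-term is preserved exactly as before because $\fw_{e, s} = 1$ and $\alpha_e = \id$. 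The Cauchy--Schwarz combination of the central sequences then proceeds without change.

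Next I would prove the cocycle analogue of Lemma~\ref{Lem:unitary}. The crucial point is that the cocycle factors cancel in the relevant products: using the normalization $\fw_{e, s} = \fw_{s, e} = 1$ together with the consequence $\alpha_s(\fw_{s^{-1}, s}) = \fw_{s, s^{-1}}$ of the cocycle identity, one checks
\[\big(\E_s(c) s^\fw\big)\big(\E_s(c) s^\fw\big)^\ast = \E_s(c)\E_s(c)^\ast \in C\cap A, \qquad \big(\E_s(c) s^\fw\big)^\ast\big(\E_s(c) s^\fw\big)=\alpha_s^{-1}\!\big(\E_s(c)^\ast \E_s(c)\big),\]
so the projection manufactured from $\E_s(c)$ again lands in $\alpha_s(C\cap A)$. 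From here the argument is purely structural and identical to the untwisted case: every member of $\Int{B}{A}$ is purely infinite simple with vanishing $\mathrm{K}_0$ by $\mathcal{O}_2$-absorption and \Cs-irreducibility, so Cuntz's lemma \cite{Cun} provides the partial isometry correcting $\E_s(c)$ to a unitary in $\E_s(C)\cap A^{\rm u}$, and the cancellation $(x s^\fw)(u s^\fw)^\ast = x u^\ast$ yields $\E_s(C) = (C\cap A)\,u$ for any such unitary $u$.

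With the two lemmas in hand, the assembly copies the proof of Theorem~\ref{Thmint:Cs}: I would put $D := C\cap A$ and $\Lambda := \{s : \E_s(C)\neq \0\}$, which is a subgroup by the lemmas, and choose $\fu \colon \Lambda \rightarrow A^{\rm u}$ with $\E_s(C)s^\fw = D\fu_s s^\fw = C \cap (A s^\fw)$. Reading off the $e$- and $(st)$-coefficients of the products $(\fu_s s^\fw)\, d\, (\fu_s s^\fw)^\ast$ and $(\fu_s s^\fw)(\fu_t t^\fw)$, both of which lie in $C$, verifies the two twisted respecting relations $\fu_s\alpha_s(D)\fu_s^\ast = D$ and $\fu_s\alpha_s(\fu_t)\fw_{s, t}\fu_{st}^\ast \in D^{\rm u}$, and Lemma~\ref{Lem:AP2} then gives $C = D\rca{{}^\fu\alpha, {}^\fu\fw}\Lambda$. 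The main obstacle is not conceptual but bookkeeping: one must check that every cocycle factor entering the central-freeness iteration can be absorbed into the element to which central freeness is applied, and that the factors in the unitary lemma cancel via the cocycle identity. Both work out precisely because $\fw$ is normalized and unitary-valued, so no new phenomenon beyond the ordinary-action case arises.
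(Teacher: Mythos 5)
Your proposal is correct and takes essentially the same route as the paper, which proves Proposition \ref{Prop:unital} precisely by rerunning Lemmas \ref{Lem:cf} and \ref{Lem:unitary} in the twisted setting and concluding with Lemma \ref{Lem:AP2}. The only cosmetic difference is bookkeeping in the twisted version of Lemma \ref{Lem:cf}: the paper expands elements in the basis $t^\fw s^\fw$, so that no $2$-cocycle factors appear at all, whereas you expand in $(ts)^\fw$ and absorb the resulting factor $\fw_{t,s}^\ast$ into the element to which central freeness is applied --- both manipulations are valid and equivalent.
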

\begin{proof}
We observe that the proof of Lemma \ref{Lem:cf} works in the present situation, after replacing
$s$, $ts$ appearing in the series expansions of $\tilde{c}_i$ therein by $s^\fw$, $t^\fw s^\fw$ (not $(ts)^\fw$) respectively.
Then Lemma \ref{Lem:unitary} also works in this setting.
By using Lemma \ref{Lem:AP2} instead of Lemma \ref{Lem:AP},
the proof is complete.
\end{proof}

For any simple \Cs-algebra $A$, the tensor product $\mathcal{O}_2 \otimes A$ is purely infinite simple.
In particular it contains nonzero projections, and all nonzero projections are mutually Murray--von Neumann equivalent.
Moreover, when $A$ is non-unital and separable, for any nonzero $p\in (\mathcal{O}_2 \otimes A)^{\rm p}$, one can find a sequence $(v_n)_{n\in \IN}$ in $\mathcal{O}_2\otimes A$
with
\[v_n^\ast v_n=p \quad {\rm~for~all~}n\in \IN, \quad \sum_{n\in \IN} v_n v_n^\ast =1\quad{\rm~in~the~strict~topology}.\]
We refer the reader to the book \cite{Rorbook} for basic facts on purely infinite simple \Cs-algebras.

Building on these observations, we now complete the proof in full generality.
\begin{Thm}\label{Thm:generalCs}
Let $B \subset (A, \alpha, \fw)$ be a \Cs-irreducible, centrally $\Gamma$-free inclusion of separable $\Gamma$-\Cs-algebras.
Let $\Gamma$ be a group with the AP.
Then for any $C\in \Int{B}{A\rca{\alpha, \fw}\Gamma}$, there is $\Lambda<\Gamma$
and an $(\mathcal{O}_2\otimes (C\cap A), \Lambda)$-respecting map $\fu$ satisfying
\[\mathcal{O}_2\otimes C=(\mathcal{O}_2\otimes (C\cap A))\rca{{}^\fu \alpha, {}^\fu \fw}\Lambda.\]
\end{Thm}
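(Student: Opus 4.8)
The plan is to adapt the proof of Proposition \ref{Prop:unital} to the non-unital setting, the only genuinely new ingredient being the replacement of the honest unitary produced in (the analogue of) Lemma \ref{Lem:unitary} by a unitary in the multiplier algebra. Throughout I work inside $(\mathcal{O}_2\otimes A)\rca{\id_{\mathcal{O}_2}\otimes\alpha,\,1\otimes\fw}\Gamma=\mathcal{O}_2\otimes(A\rca{\alpha,\fw}\Gamma)$, and write $\E_s$ for the canonical coefficient maps of this crossed product. Since $\mathcal{O}_2$ is unital, central $\Gamma$-freeness of $B\subset(A,\alpha,\fw)$ passes to $\mathcal{O}_2\otimes B\subset(\mathcal{O}_2\otimes A,\id_{\mathcal{O}_2}\otimes\alpha,1\otimes\fw)$ by taking cocycles of the form $1_{\mathcal{O}_2}\otimes\eta_n$, so Lemma \ref{Lem:cf} (whose proof is insensitive to unitality and to the presence of the cocycle, as already noted in the proof of Proposition \ref{Prop:unital}) yields $\E_s(\mathcal{O}_2\otimes C)\,s^\fw\subset\mathcal{O}_2\otimes C$ for every $s\in\Gamma$.

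I then set $D:=\mathcal{O}_2\otimes(C\cap A)$, $X_s:=\E_s(\mathcal{O}_2\otimes C)$, and $\Lambda:=\{s\in\Gamma:X_s\neq\{0\}\}$. Here $C\cap A\in\Int{B}{A}$ is simple by \Cs-irreducibility, so $D$ and each $\alpha_s^{-1}(D)$ are non-degenerate, purely infinite simple, separable, and have vanishing $K_0$, by the observations preceding the statement. As in Remark \ref{Rem:partial}, each $X_s$ is a closed subspace and the family $(X_s)$ satisfies $X_s^*=\alpha_s^{-1}(X_{s^{-1}})$, $X_s\alpha_s(X_t)\fw_{s,t}\subset X_{st}$ and $B\subset X_e$; in particular $X_s$ is a ternary ring of operators ($X_sX_s^*X_s\subset X_s$), and $\Lambda$ is a subgroup of $\Gamma$.

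The main step, replacing Lemma \ref{Lem:unitary}, is to produce for each $s\in\Lambda$ a unitary $\fu_s\in\cM(\mathcal{O}_2\otimes A)^{\rm u}$ with $X_s=D\fu_s$. By simplicity the closed span of $X_sX_s^*$ is a nonzero ideal of $D$, hence equals $D$, and likewise $\overline{\rm span}\,X_s^*X_s=\alpha_s^{-1}(D)$; both are non-degenerate, so their support projections in $\cM(\mathcal{O}_2\otimes A)$ equal $1$. Since $X_s\cdot\overline{\rm span}\,X_s^*X_s\subset X_s$, the partial isometry in the polar decomposition of any element of $X_s$ again lies in $X_s$. Using pure infiniteness and separability of $D$ and $\alpha_s^{-1}(D)$ — concretely, the exhausting isometry sequences $(v_n)$ with $\sum_n v_nv_n^*=1$ strictly from the observations — I would run a back-and-forth argument to build partial isometries $w_n\in X_s$ such that $\{w_nw_n^*\}_n$ and $\{w_n^*w_n\}_n$ are each mutually orthogonal with $\sum_n w_nw_n^*=\sum_n w_n^*w_n=1$ strictly. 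Then $\fu_s:=\sum_n w_n$ converges strictly to a unitary in $\cM(\mathcal{O}_2\otimes A)$, and $X_s=D\fu_s$ follows from $Dw_n\subset X_s$ together with $X_s=\overline{\sum_n (X_sw_n^*)w_n}\subset\overline{\sum_n Dw_n}$, using $X_sw_n^*\subset\overline{\rm span}\,X_sX_s^*=D$. I expect this bi-sided exhaustion — arranging strict convergence to $1$ simultaneously on ranges and on sources while keeping the $w_n$ inside the TRO $X_s$ — to be the main obstacle; this is precisely where $\mathcal{O}_2$ (pure infiniteness, $K_0=0$) and separability enter.

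Finally I would assemble. The bimodule relations show that $\fu\colon\Lambda\to\cM(\mathcal{O}_2\otimes A)^{\rm u}$ is $(D,\Lambda)$-respecting and that $({}^\fu\alpha,{}^\fu\fw)$ is a cocycle action of $\Lambda$ on $D$, exactly as in the unital proof. Since $\Gamma$ has the AP, and $X_s=D\fu_s$ for $s\in\Lambda$ while $X_s=\{0\}$ for $s\notin\Lambda$, Lemma \ref{Lem:AP2} identifies $\mathcal{O}_2\otimes C$ with $\{x:\E_s(x)\in D\fu_s\ (s\in\Lambda),\ \E_s(x)=0\ (s\notin\Lambda)\}=D\rca{{}^\fu\alpha,{}^\fu\fw}\Lambda$, which is the asserted splitting.
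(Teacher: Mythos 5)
Your setup (passing central freeness through $1_{\mathcal{O}_2}\otimes\xi_n$, the TRO structure of $X_s:=\E_s(\mathcal{O}_2\otimes C)$, and the final assembly via Lemma \ref{Lem:AP2}) is sound, but there is a genuine gap at the step you yourself flag as the main obstacle: the existence of partial isometries $w_n\in X_s$ whose range projections and source projections are each mutually orthogonal and sum strictly to $1$. This bi-sided exhaustion is the entire content of the theorem in the non-unital case, and you only assert it (``I would run a back-and-forth argument''). Moreover, the seeding mechanism you cite is incorrect: the partial isometry in the polar decomposition of $x\in X_s$ is in general only a \emph{strict} limit of elements of $\mathcal{O}_2\otimes A$ and need not lie in $\mathcal{O}_2\otimes A$ at all, let alone in the norm-closed subspace $X_s$; the inclusion $X_s\cdot\overline{\rm span}\,X_s^*X_s\subset X_s$ does not repair this. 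To get a partial isometry in $X_s$ one must instead invoke pure infiniteness of $\alpha_s^{-1}(D)$, e.g.\ pick $z$ with $z^*(x^*x)z$ a nonzero projection and take $w:=xz$. More seriously, after $w_1,\dots,w_n$ are chosen, matching the residual corner of $D$ on the range side against the residual corner of $\alpha_s^{-1}(D)$ on the source side \emph{through} $X_s$ is a comparison problem for projections in multiplier algebras --- equivalently, the assertion that $X_s$, as a full countably generated right Hilbert $\alpha_s^{-1}(D)$-module, is isomorphic to $\alpha_s^{-1}(D)$ itself. This is true in the present situation (by \cite{Zha} the non-unital algebras $D$ and $\alpha_s^{-1}(D)$ are stable, and one can then use Kasparov stabilization and the structure of full multiplier projections over stable purely infinite simple algebras), but none of that machinery appears in your proposal, and it is precisely where the difficulty sits.

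The paper circumvents the two-sided problem altogether. It assumes WLOG that the inclusion is $\mathcal{O}_2$-absorbing, fixes a projection $p\in B^{\rm p}\setminus\{0\}$ and partial isometries $\fv_s\in B$ with $\fv_s\fv_s^*=p$, $\fv_s^*\fv_s=\alpha_s(p)$, and compresses to the \emph{unital} corner inclusion $pBp\subset(pAp)\rca{\alpha^p,\fw^p}\Gamma$ via an isomorphism $\Theta$ onto $p(A\rca{\alpha,\fw}\Gamma)p$. Proposition \ref{Prop:unital} (the already-proved unital cocycle case, where Lemma \ref{Lem:unitary} does the $K_0$-work) then yields a unitary $u\in(pAp)^{\rm u}$ with $\E_s(\Theta^{-1}(pCp))=(p(C\cap A)p)\cdot u$. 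Finally the corner unitary is inflated by a single \emph{one-sided} exhausting sequence $(w_n)_{n\in\IN}\subset B$ with constant source, $w_n^*w_n=p$ and $\sum_n w_nw_n^*=1$ strictly, setting $\fu_s:=\sum_n w_n u\fv_s\alpha_s(w_n)^*$; the identity $w_nw_n^*\E_s(C)\alpha_s(w_mw_m^*)=w_nw_n^*(C\cap A)w_mw_m^*\fu_s$ together with norm-closedness of $\E_s(C)$ (from $\E_s(C)=A\cap(C(s^\fw)^*)$, via the Lemma \ref{Lem:cf} argument) gives $\E_s(C)=(C\cap A)\cdot\fu_s$. Because the source projection is the fixed $p$, only the elementary one-sided fact about non-unital purely infinite simple separable \Cs-algebras quoted before the theorem is needed. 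To complete your route you would have to either prove your bi-exhaustion rigorously (via stability and Brown-type Hilbert-module arguments) or adopt this corner trick; as written, the central claim is unproved and its stated justification fails.
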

\begin{proof}
We may assume that $A$ is non-unital and that the inclusion $B \subset (A, \alpha, \fw)$ is $\mathcal{O}_2$-absorbing.
We fix $p\in B^{\rm p}\setminus \{0\}$.
Choose a map $\fv \colon \Gamma \rightarrow B$ with
\[\fv_s\fv_s^\ast = p,\quad \fv_s^\ast \fv_s = \alpha_s(p)\]
for all $s\in \Gamma$.
Consider the cocycle action $(\alpha^p, \fw^p)\colon \Gamma \acts pAp$ defined by
\[\alpha^p_s(a):=\fv_s \alpha_s(a)\fv_s^\ast, \quad \fw^p_{s, t}:=\fv_s\alpha_s(\fv_t)\fw_{s, t} \fv_{st}^\ast\]
for $a\in pAp$ and $s, t\in \Gamma$.
Note that $pBp \subset (pAp, \alpha^p, \fw^p)$, and the inclusion is centrally $\Gamma$-free because the image of $\fv$ sits in $B$.
Observe that we have an isomorphism
$\Theta \colon (pAp)\rca{\alpha^p, \fw^p}\Gamma \rightarrow p(A\rca{\alpha, \fw}\Gamma) p$
satisfying
\[\Theta(pap)=pap\quad \text{ for } a\in A,\quad \Theta(s^{\fw^p})=\fv_s s^\fw \text{ for } s\in \Gamma.\]

Now let $C\in \Int{B}{A\rca{\alpha, \fw}\Gamma}$.
To complete the proof, it suffices to show that for any $s\in \Gamma$,
either $\E_s(C)=\0$ or there exists $\fu_s\in \cM(A)^{\rm u}$ with
$\E_s(C)=(C\cap A) \cdot \fu_s$.
Indeed if this claim holds, then the set
$\Lambda:=\{s\in \Gamma: \E_s(C)\neq \0\}$ forms a subgroup of $\Gamma$;
any map $\fu \colon \Lambda \rightarrow \cM(A)^{\rm u}$
satisfying $\E_s(C)=(C\cap A) \cdot \fu_s$ for all $s\in \Lambda$ is $(C\cap A, \Lambda)$-respecting;
and $C=(C\cap A) \rca{{}^\fu \alpha, {}^\fu \fw}\Lambda$ by Lemma \ref{Lem:AP2}.

To prove the claim, take any $s\in \Gamma$ with $\E_s(C)\neq \0$.
Put
\[D:=\Theta^{-1}(pCp)\in \Int{pBp}{(pAp)\rca{\alpha^p, \fw^p}\Gamma}.\]
By the definition of $\Theta$, we have
$\E_s(D)\neq \0$.
Hence by Proposition \ref{Prop:unital}, one can find $u\in (pAp)^{\rm u}$ with
$\E_s(D)=(D \cap (pAp))\cdot u$.
Note that $D\cap (pAp)= p(C \cap A)p$.
Hence we obtain
\begin{align*}p\E_s(C)\alpha_s(p)&=\E_s(pCp)\\
&=\E_s(D)\fv_s\\
&=p(C \cap A)u\fv_s.
\end{align*}

Now pick a sequence $(w_n)_{n\in \IN}$ in $B$
with $w_n^\ast w_n=p$ for $n\in \IN$ and $\sum_{n\in \IN}w_n w_n^\ast =1$ in the strict topology.
Then the series
\[\fu_s:=\sum_{n\in \IN} w_n u\fv_s \alpha_s(w_n)^\ast\]
strictly converges to an element in $\cM(A)^{\rm u}$.
Observe that, as $(w_n)_{n\in \IN} \subset B \subset C$, one has

\begin{align*}
w_nw_n^\ast \E_s(C) \alpha_s(w_m w_m^\ast) &=w_n \E_s(w_n^\ast C w_m)\alpha_s(w_m^\ast)\\
&=w_n \E_s(pCp)\alpha_s(w_m^\ast)\\
&=w_np\E_s(C)\alpha_s(p w_m^\ast)\\
&=w_np(C \cap A) w_m^\ast w_mu\fv_s \alpha_s(w_m^\ast)\\
&=w_np(C \cap A) w_m w_m^\ast w_mu\fv_s \alpha_s(w_m^\ast)\\
&=w_nw_n^\ast (C \cap A) w_mw_m^\ast \fu_s
\end{align*}
for $n, m\in \IN$.
By the same proof as that of Lemma \ref{Lem:cf}, we have
$\E_s(C)=A \cap (C(s^\fw)^\ast)$ and hence it is norm closed.
Thus the condition $\sum_{n\in \IN}w_n w_n^\ast =1$ implies
\[\E_s(C)=(C\cap A) \cdot \fu_s.\]
\end{proof}
\begin{Rem}
If $A$ is non-unital, simple, and separable,
then $\mathcal{O}_2 \otimes A$ is stable by \cite{Zha}.
As a result, by the stabilization trick \cite{Sut} (cf.~ Remark 1.5 in \cite{GS2}),
the $2$-cocycles in Theorem \ref{Thm:generalCs} can be removed in the non-unital case.
\end{Rem}

\section{Cocycle actions which are very hard to untwist}\label{section:nonexact}
In this section, for non-exact groups, we construct a cocycle action on $\mathcal{O}_2$ which is very hard to untwist.
(For the precise meaning, see Theorem \ref{Thm:untwist} below.)
Remark \ref{Rem:ambient} and Theorem \ref{Thm:untwist} below show that the $2$-cocycle appearing in our splitting theorem
cannot be removed even further tensoring with any unital cocycle action.

Only in this section, we treat \emph{locally compact} second countable (lcsc) groups $G$.
This is because the result in this section works for lcsc groups, and it is of independent interest.

Recall that a cocycle action $(\alpha, \fw) \colon G \acts A$ of an lcsc group on a separable \Cs-algebra
is a pair of Borel maps
\[\alpha\colon G \rightarrow \Aut(A),\quad \fw\colon G\times G \rightarrow \cM(A)^{\rm u}\]
satisfying the same axioms as the discrete group case.
Here, for a separable \Cs-algebra $A$, we equip ${\rm Aut}(A)$, $A$, $\cM(A)^{\rm u}$
with the point-norm topology, norm topology, strict topology, respectively.
Note that all these topologies are Polish, hence they are standard as Borel spaces.
Similarly, we require Borel measurability for the maps witnessing exterior equivalence.

It is fundamental that we only require Borel measurability of the maps.
Indeed, for two main motivating examples, namely cocycle actions
arising from \emph{group extensions} and \emph{corners of $($cocycle$)$ actions},
the resulting maps are typically discontinuous.
However we note that, when $\alpha$ is a homomorphism, it is automatically continuous,
because $\Aut(A)$ is a Polish group by \cite{Wil}, Theorem D.11.
Similarly, for ordinary actions $\alpha \colon G \acts A$,
any Borel $1$-cocycle $\fu$ of $\alpha$ is automatically strictly continuous.
Indeed the map $G\ni s\mapsto \fu_s s\in\cM( A \rca{\alpha} G)^{\rm u}$ defines a Borel homomorphism by the cocycle relation,
and hence it must be continuous in the strict topology again by \cite{Wil}, Theorem D.11.
This proves the strict continuity of $\fu$.
As a result, amenability of \Cs-dynamical systems (see \cite{AD}, \cite{OS})
is preserved by exterior equivalence.

\begin{Rem}\label{Rem:ambient}
We note that any cocycle action $(\alpha, \fw) \colon G \acts A$ of an lcsc group
admits an ambient cocycle action whose $2$-cocycle is a coboundary.
To see this, we define $C$ to be the \Cs-subalgebra of $\cM(A \rca{\alpha, \fw} G)$ generated by $A$ and $A\rca{\alpha, \fw} G$.
Then $\cM(C) \subset \cM(A\rca{\alpha, \fw} G)$ contains
$s^\fw$ and $\fw_{s, t}$ for $s, t\in G$.
Thus we have the conjugate (Borel) cocycle action $(\ad(\bullet^\fw), \fw)\colon G \acts C$.
One has $A \subset (C, \ad(\bullet^\fw), \fw)$,
and $(\ad(\bullet^\fw), \fw)\colon G \acts C$ is exterior equivalent to the trivial action via the (strict) Borel map $\Gamma \ni s \mapsto s^\fw\in \cM(C)^{\rm u}$.
\end{Rem}
\subsection{Corner cocycle actions}
Here for our purpose, we review the corner cocycle actions.
Because the author was not able to find an appropriate reference, we record the details of the construction for the reader's convenience.

For an lcsc group $G$, let $(\alpha, \fw) \colon G \acts A$ be a cocycle action on a separable \Cs-algebra $A$.
Assume that $p\in A^{\rm p}$ satisfies the following condition:
the projections $\alpha_g(p)$; $g\in G$ are mutually Murray--von Neumann equivalent.
Then one can define the \emph{corner cocycle action} $(\alpha^p, \fw^p) \colon G \acts pAp$ as follows.
We first observe the following lemma.
\begin{Lem}\label{Lem:Borel}
Keep the above setting. 
Then there is a Borel map $\fv \colon G \rightarrow A$ satisfying
\[\fv_g\fv_g^\ast = p,\quad \fv_g^\ast \fv_g=\alpha_g(p)\]
for all $g\in G$.
\end{Lem}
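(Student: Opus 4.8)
The statement to prove (Lemma~\ref{Lem:Borel}) asks for a Borel map $\fv \colon G \rightarrow A$ with $\fv_g \fv_g^\ast = p$ and $\fv_g^\ast \fv_g = \alpha_g(p)$ for all $g \in G$. The hypothesis is that the projections $\alpha_g(p)$; $g \in G$, are mutually Murray--von Neumann equivalent, so for \emph{each} individual $g$ such a partial isometry $\fv_g$ exists; the entire content of the lemma is the \emph{Borel measurability} of a global choice. The plan is therefore a measurable-selection argument: I would exhibit the set of valid choices as a Borel (or analytic) subset of the appropriate standard Borel space and invoke a selection theorem to extract a Borel section.

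**Main line of argument.** First I would record the relevant standard Borel structure: for a separable \Cs-algebra $A$, the space $A$ with its norm topology is Polish, hence standard as a Borel space, and the map $G \ni g \mapsto \alpha_g(p) \in A$ is Borel since $\alpha$ is a Borel cocycle action and $g \mapsto \alpha_g$ is Borel into $\Aut(A)$ with the point-norm topology. Next I would consider the graph
\[
R := \left\{(g, v) \in G \times A : v v^\ast = p,\ v^\ast v = \alpha_g(p)\right\}.
\]
Because the relations $vv^\ast = p$ and $v^\ast v = \alpha_g(p)$ are closed conditions in $v$ for fixed $g$, and because $g \mapsto \alpha_g(p)$ is Borel, the set $R$ is a Borel subset of the standard Borel space $G \times A$. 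The Murray--von Neumann equivalence hypothesis guarantees that the projection of $R$ to $G$ is all of $G$, i.e.\ every fibre $R_g$ is nonempty. The plan is then to apply the Jankov--von Neumann uniformization theorem (or the Kuratowski--Ryll-Nardzewski selection theorem, after verifying the fibres are suitably structured), which produces a Borel (indeed, universally measurable, or $\sigma(\Sigma^1_1)$-measurable) map $g \mapsto \fv_g$ with $(g, \fv_g) \in R$ for all $g$. One then notes that by the remarks preceding the lemma in the excerpt, Borel measurability is exactly the regularity required for cocycle actions of lcsc groups, so this suffices.

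**Anticipated obstacle and refinement.** The main technical point I expect to require care is the precise measurability class delivered by the selection theorem. A direct application of Jankov--von Neumann to a Borel set with nonempty fibres yields a map that is measurable with respect to the $\sigma$-algebra generated by the analytic sets, not necessarily Borel in the strict sense; upgrading this to honest Borel measurability typically requires either that the fibres $R_g$ are $\sigma$-compact/closed with extra uniformity, or an appeal to a stronger selection result. I would handle this by either (i) verifying that the fibres $R_g$ are closed (they are, being defined by norm-closed conditions) and the fibre map is in a suitable sense measurable, so that Kuratowski--Ryll-Nardzewski applies to give a Borel section directly; or (ii) observing that for the purposes of the paper a $\mu$-measurable or universally measurable section is enough, since cocycle actions are defined up to null sets and Borel representatives can be chosen after modification on a null set. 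A cleaner alternative, which I would try first, is to bypass abstract selection entirely by a \emph{constructive} choice: fix once and for all a partial isometry $v_0$ implementing $p \sim \alpha_e(p)=p$ (trivially $v_0 = p$), and for general $g$ build $\fv_g$ from the polar decomposition of a Borel-in-$g$ family of elements known to have the right support and range projections, using continuity of the functional calculus. Whether such an explicit formula is available depends on the structure of the given equivalences, so the abstract measurable-selection route is the safe fallback and is the step where the real work lies.
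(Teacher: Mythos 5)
There is a genuine gap at the heart of your argument: the selection step does not deliver a \emph{Borel} map, and your proposed repairs do not close the hole. The Jankov--von Neumann theorem applied to the Borel set $R$ only yields a $\sigma(\Sigma^1_1)$-measurable (universally measurable) uniformization, as you note. Your fix (i) is where the argument breaks: Kuratowski--Ryll-Nardzewski does \emph{not} apply merely because the fibres $R_g$ are closed. It requires the multifunction $g \mapsto R_g$ to be weakly measurable, i.e.\ that $\{g \in G : R_g \cap U \neq \emptyset\}$ be Borel for every open $U \subset A$; for a Borel graph with closed fibres this set is in general only analytic, and it is a classical fact that Borel (even closed) subsets of a product of Polish spaces with all fibres nonempty need not admit any Borel uniformization. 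The Arsenin--Kunugui theorem, which would give Borel uniformization, needs $\sigma$-compact fibres, whereas here each $R_g$ is a closed bounded subset of the (generally infinite-dimensional) \Cs-algebra $A$, hence not $\sigma$-compact. Verifying weak measurability in this concrete situation essentially forces you to produce local Borel parametrizations of the fibres --- which is precisely the real work, and once done makes the selection theorem superfluous. Your fix (ii) is also not available in this paper's framework: a cocycle action is by definition a pair of everywhere-defined Borel maps, and the corner construction needs $\fv_g\fv_g^\ast = p$, $\fv_g^\ast \fv_g = \alpha_g(p)$ for \emph{every} $g$ (each $\alpha^p_g$ must be an automorphism of $pAp$); modifying a universally measurable selection on a Haar-null set either destroys these identities on that null set or leaves a non-Borel map, and the application (Theorem 5.4) genuinely uses Borel cocycle actions and Borel exterior equivalences.

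The paper's proof is exactly the ``cleaner alternative'' that you mention and then abandon, and it requires no selection theorem at all. For $g \in G$ put $U_g := \{s \in G : \|\alpha_s(p) - \alpha_g(p)\| < 1\}$; these are Borel sets, and since $A$ is separable, countably many of them, $U_{g_n}$, cover $G$. On each $U_{g_n}$ the element $\alpha_{g_n}(p)\alpha_s(p)$ has a polar decomposition whose partial isometry part $\fv^{(n)}_s$ lies in $A$ (this is where the norm bound $<1$ between the two projections is used) and satisfies $\fv^{(n)}_s(\fv^{(n)}_s)^\ast = \alpha_{g_n}(p)$, $(\fv^{(n)}_s)^\ast \fv^{(n)}_s = \alpha_s(p)$; moreover $s \mapsto \fv^{(n)}_s$ is Borel by continuity of the functional calculus. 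The Murray--von Neumann hypothesis is then invoked only \emph{countably many times}, to choose $v_n \in A$ with $v_nv_n^\ast = p$ and $v_n^\ast v_n = \alpha_{g_n}(p)$, and one patches: $\fv_g := v_n \fv^{(n)}_g$ for $g \in U_{g_n}\setminus \bigcup_{k<n}U_{g_k}$. Note that this local structure also explains why your KRN route is circular: on each patch one has $R_g = \mathcal{U}(pAp)\cdot(v_n\fv^{(n)}_g)$, and it is only via such an explicit Borel base point that the weak measurability needed for KRN could be checked.
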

\begin{proof}
For each $g\in G$, set
\[U_g:=\{s\in G: \alpha_s(p)\approx_{1}\alpha_g(p)\}.\]
Note that each $U_g$ is Borel in $G$.
Since $A$ is separable, one can find a sequence $(g_n)_{n\in \IN}$ in $G$ with
\[\bigcup_{n\in \IN} U_{g_n}=G.\]
For each $n\in \IN$,
we get a Borel map $\fv^{(n)}\colon U_{g_n} \rightarrow A$ with
\[\fv^{(n)}_s (\fv^{(n)}_s)^\ast=\alpha_{g_n}(p),\quad (\fv^{(n)}_s)^\ast\fv^{(n)}_s= \alpha_{s}(p)\]
by defining $\fv^{(n)}_s$ to be the partial isometry part of the polar decomposition of $\alpha_{g_n}(p)\alpha_s(p)$ for $s \in U_{g_n}$ (which exists in $A$ by the norm condition).
For each $n\in \IN$, we pick $v_n \in A$ with
\[v_n v_n^\ast=p,\quad v_n^\ast v_n= \alpha_{g_n}(p).\]
(This is the point where we use the assumption on $p$.)
Now the desired Borel map $\fv\colon G \rightarrow A$ is given by
\[\fv_g:=v_n \fv^{(n)}_g\quad {\rm~ for~} g\in U_{g_n} \setminus \bigcup_{k=1}^{n-1} U_{g_k},~ n\in \IN.\]
\end{proof}
Now we recall the definition of the corner cocycle action. 
Keep the above setting.
We choose a Borel map $\fv\colon G \rightarrow A$ as in Lemma \ref{Lem:Borel}.
Then define the maps
\[\alpha^p \colon G \rightarrow {\rm Aut}(pAp) \quad{\rm and}\quad \fw^p \colon G \times G \rightarrow (pAp)^{\rm u}\]
 to be
\[\alpha^p_s(a):=\fv_s \alpha_s(a) \fv_s^\ast \quad {\rm~for~}a\in pAp, s\in G,\]
\[\fw^p_{s, t}:=\fv_s \alpha_s(\fv_t)\fw_{s, t} \fv_{st}^\ast \quad {\rm~for~}s, t\in G.\]
Since $\fw_{s, t} \alpha_{st}(p)=\alpha_s(\alpha_t(p)) \fw_{s, t}$ for $s, t\in G$,
the values of $\fw^p$ are indeed contained in $(pAp)^{\rm u}$.
The pair $(\alpha^p, \fw^p)$ defines a cocycle action of $G$ on $pAp$.

We remark that the definition of $(\alpha^p, \fw^p)$ depends on the choice of $\fv$, but is independent up to exterior equivalence.
Indeed for any other Borel map $\fv'$ with the above properties,
the Borel map $\fu\colon G \rightarrow (pAp)^{\rm u}$ given by
$\fu_g:=\fv'_g \fv_g^\ast$ witnesses the exterior equivalence of the two resulting cocycle actions.
As this difference is not important in this article, we omit the label $\fv$ in the notation $(\alpha^p, \fw^p)$.

We first record the following basic result.
\begin{Lem}\label{Lem:cvss}
Let $A$ be a unital separable \Cs-algebra.
Let $(\alpha, \fw)\colon G \acts A \otimes \IK$ be a cocycle action.
We fix a matrix unit $(e_{i, j})_{i, j\in \IN}$ of $\IK$.
Assume that, with $p:=1\otimes e_{1, 1}$,
the projections $\alpha_g(p); g\in G$ are mutually Murray--von Neumann equivalent.
We identify $A$ with $A\otimes \IC e_{1, 1}$ in the obvious way.
Then $(\alpha^p, \fw^p)\otimes 1_{\IK}$ is exterior equivalent to $(\alpha, \fw)$.
\end{Lem}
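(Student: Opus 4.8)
The plan is to exhibit, directly, a Borel family of unitary multipliers implementing the exterior equivalence. First I would fix the isometries $w_n := 1 \otimes e_{n, 1} \in \cM(A \otimes \IK)$, which satisfy $w_n^\ast w_m = \delta_{n, m}\, p$ and $\sum_n w_n w_n^\ast = 1$ in the strict topology, and use them to realize $(\alpha^p, \fw^p) \otimes 1_\IK$ concretely on $A \otimes \IK$ via the standard identification $A \otimes \IK \cong (pAp) \otimes \IK$ under which $w_n x w_m^\ast$ corresponds to $x \otimes e_{n, m}$ for $x \in pAp$. Under this identification the amplified cocycle action is
\[(\alpha^p \otimes \id)_g(x) = \sum_{n, m} w_n\, \alpha^p_g(w_n^\ast x w_m)\, w_m^\ast, \qquad (\fw^p \otimes 1)_{s, t} = \sum_n w_n\, \fw^p_{s, t}\, w_n^\ast,\]
where $\alpha^p_g(y) = \fv_g \alpha_g(y) \fv_g^\ast$ for $y \in pAp$ and $\fv \colon G \rightarrow A \otimes \IK$ is the Borel map from Lemma \ref{Lem:Borel}.

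Next I would define the candidate implementing family by
\[\fU_g := \sum_n w_n\, \fv_g\, \alpha_g(w_n)^\ast \in \cM(A \otimes \IK), \qquad g \in G,\]
interpreting the series as the strict limit of its finite partial sums. Using $w_n^\ast w_m = \delta_{n, m}\, p$, the relations $\fv_g \fv_g^\ast = p$, $\fv_g^\ast \fv_g = \alpha_g(p)$, and strict continuity of $\alpha_g$ on multipliers, one computes $\fU_g \fU_g^\ast = \sum_n w_n w_n^\ast = 1$ and $\fU_g^\ast \fU_g = \alpha_g(\sum_n w_n w_n^\ast) = 1$; the partial sums are partial isometries whose initial and final projections increase strictly to $1$, which furnishes the strict convergence. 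Since each partial sum is a Borel function of $g$ and the strict topology on $\cM(A \otimes \IK)^{\rm u}$ is Polishable (hence standard Borel), the pointwise strict limit $\fU$ is Borel, so $g \mapsto \fU_g$ is a Borel map into $\cM(A \otimes \IK)^{\rm u}$.

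It then remains to verify the two exterior equivalence identities. A direct computation, collapsing $w_n^\ast w_m = \delta_{n, m}\, p$ and using $\fv_g \alpha_g(p) = \fv_g = p \fv_g$, gives $\ad(\fU_g) \circ \alpha_g = (\alpha^p \otimes \id)_g$. For the $2$-cocycle the key algebraic input is the identity $\fv_s \alpha_s(\fv_t) \fw_{s, t} = \fw^p_{s, t}\, \fv_{st}$, which is immediate from the definition of $\fw^p$ once one checks that the left-hand side has right support projection $\alpha_{st}(p) = \fv_{st}^\ast \fv_{st}$. Feeding this into $\fU_s \alpha_s(\fU_t) \fw_{s, t} \fU_{st}^\ast$ and repeatedly applying $\alpha_g(w_n)^\ast \alpha_g(w_m) = \delta_{n, m}\, \alpha_g(p)$ — where moving $\alpha_s(\alpha_t(w_n))$ into the form $\fw_{s, t}\, \alpha_{st}(w_n)\, \fw_{s, t}^\ast$ forces an application of the cocycle identity $\alpha_s \circ \alpha_t = \ad(\fw_{s, t}) \circ \alpha_{st}$ — collapses the whole expression to $\sum_n w_n\, \fw^p_{s, t}\, w_n^\ast = (\fw^p \otimes 1)_{s, t}$. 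Setting $\fu := \fU^\ast$, these two relations say precisely that $(\alpha^p, \fw^p) \otimes 1_\IK$ is exterior equivalent to $(\alpha, \fw)$ in the sense of the preliminaries.

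The difficulty here is bookkeeping rather than conceptual: one must track carefully the single place where the $2$-cocycle identity enters the amplified computation, and confirm that the formally infinite sum defining $\fU_g$ is simultaneously strictly convergent and Borel in $g$. The latter is the only genuinely measurability-theoretic point, and it is exactly why it suffices to work with Borel regularity throughout the lcsc setting.
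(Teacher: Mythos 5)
Your proposal is correct and follows essentially the same route as the paper: the paper's proof defines the Borel family $\fu_g:=\sum_{n\in\IN}\alpha_g(e_{n,1})\fv_g^\ast e_{1,n}$ (exactly the adjoint of your $\fU_g$) and verifies the same two exterior-equivalence identities by the same direct computation, with the $2$-cocycle identity entering at the same single point. The only cosmetic difference is that you witness the equivalence in the symmetric direction ($\fU=\fu^\ast$), which is immaterial since exterior equivalence is symmetric.
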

\begin{proof}
We fix a Borel map $\fv$ as in Lemma \ref{Lem:Borel} for $p$,
and we define $(\alpha^p, \fw^p)$ via $\fv$.
Define the Borel map $\fu \colon G \rightarrow \cM(A\otimes \IK)^{\rm u}$ by
\[\fu_g:=\sum_{n\in \IN}\alpha_g(e_{n, 1})\fv_g^\ast e_{1, n}.\]
Here and below the series are taken in the strict topology. 
Note that 
\[\fu_g e_{i, 1}=\alpha_g(e_{i, 1})\fv_g^\ast\quad {\rm~ for~} g\in G,~ i\in \IN.\]

Direct calculations show that
\begin{align*}
\fu_g(\alpha^p_g \otimes \id_{\IK})(a \otimes e_{i, j})\fu_g^\ast&=\fu_g e_{i, 1}\alpha_g^p(a\otimes e_{1, 1}) e_{1, j} \fu_g^\ast\\
&= \fu_g e_{i, 1} \fv_g\alpha_g(a\otimes e_{1, 1}) \fv_g^\ast e_{1, j}\fu_g^\ast \\
&=\alpha_g(a\otimes e_{i, j}),\\
\fu_g(\alpha^p_g \otimes \id_{\IK})(\fu_h)(\fw^p\otimes 1_{\IK})_{g, h} \fu_{gh}^\ast&= \sum_{n\in \IN} \alpha_g(\fu_h)\fu_g e_{n, 1} \fv_g \alpha_g(\fv_h)\fw_{g, h} \fv_{gh}^\ast e_{1, n} \fu_{gh}^\ast\\
&= \sum_{n\in \IN} (\alpha_g\circ\alpha_h)(e_{n, 1})\fw_{g, h} \alpha_{gh}(e_{1, n})\\
&=\fw_{g, h}
\end{align*}
for $a\in A, i, j\in \IN, g, h\in G$.
Thus $\fu$ witnesses the desired equivalence.
\end{proof}

Now we prove the main theorem of this section.
This is an application of the recent developments on amenable actions on simple \Cs-algebras \cite{Suzeq}, \cite{OS}
and the existence of non-exact groups \cite{Gro}, \cite{Osa}.
\begin{Thm}\label{Thm:untwist}
Let $G$ be a non-exact lcsc group. Then there is a cocycle action
$(\alpha, \fw) \colon G \acts \mathcal{O}_2$ with the following property.
For any cocycle action $(\beta, \fx) \colon G \acts A$ on a unital separable \Cs-algebra,
the $(\alpha\otimes \beta)$-$2$-cocycle $\fw\otimes \fx$ is not a coboundary.
\end{Thm}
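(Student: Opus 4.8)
The plan is to produce an \emph{amenable} cocycle action on $\mathcal{O}_2$ and then play its amenability against the non-exactness of $G$. First I would invoke the recent construction of amenable dynamics: by \cite{Suzeq}, \cite{OS} the group $G$ admits a genuine amenable action $\gamma_0\colon G\acts D$ on a stable, simple, separable, nuclear \Cs-algebra $D$. Since for a non-exact group no \emph{unital} algebra can carry an amenable action (this is used in the final step below), $D$ is necessarily non-unital, which is precisely why a $2$-cocycle will be unavoidable. Amplifying by $\id_{\mathcal{O}_2}$ yields an amenable action $\delta:=\id_{\mathcal{O}_2}\otimes\gamma_0$ on $\mathcal{O}_2\otimes D$; by Kirchberg's $\mathcal{O}_2$-absorption (see \cite{Rorbook}), $\mathcal{O}_2\otimes D\cong\mathcal{O}_2\otimes\IK$, so I may regard $\delta$ as an amenable action on $\mathcal{O}_2\otimes\IK$. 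Cutting by the projection $p:=1_{\mathcal{O}_2}\otimes e_{1,1}$, whose $G$-translates are mutually Murray--von Neumann equivalent by pure infiniteness, I form the corner cocycle action $(\alpha,\fw):=(\delta^p,\fw^p)\colon G\acts\mathcal{O}_2$ of this section. By Lemma \ref{Lem:cvss} its stabilization $(\alpha,\fw)\otimes 1_\IK$ is exterior equivalent to $\delta$, hence amenable, so $(\alpha,\fw)$ itself is amenable.

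Next I would argue by contradiction. Suppose that for some cocycle action $(\beta,\fx)\colon G\acts A$ on a unital separable \Cs-algebra the $2$-cocycle $\fw\otimes\fx$ is a coboundary, that is, the diagonal system $(\alpha\otimes\beta,\fw\otimes\fx)\colon G\acts\mathcal{O}_2\otimes A$ is exterior equivalent to a genuine action $(\gamma,1)$. Because $\mathcal{O}_2$ and $A$ are unital, the algebra $\mathcal{O}_2\otimes A$ is unital. The point is that amenability is inherited by the diagonal system: an approximating net witnessing amenability of $(\alpha,\fw)$, amplified by $1_A$, witnesses amenability of $(\alpha\otimes\beta,\fw\otimes\fx)$ (permanence of amenability under tensoring with an arbitrary $G$-system; cf.\ \cite{AD}, \cite{OS}). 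Since amenability is preserved under exterior equivalence, as recalled at the beginning of this section, the genuine action $(\gamma,1)$ on the \emph{unital} algebra $\mathcal{O}_2\otimes A$ is amenable.

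The final step is to invoke the characterization that a genuine amenable action on a unital \Cs-algebra forces the acting group to be exact (the ``unital $\Rightarrow$ exact'' half of the amenable-action characterization of exactness; cf.\ \cite{AD}, \cite{OS}). As $G$ is non-exact, this is the desired contradiction, and the theorem follows.

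I expect the main obstacle to be precisely this exactness characterization: it is where Anantharaman--Delaroche's description of amenability through the center of the bidual meets exactness, and it is the single point at which the non-exactness hypothesis is brought to bear. It is genuinely essential that $\mathcal{O}_2\otimes A$ be unital here; the analogous statement over a stable algebra would be vacuous, consistent with the fact that $D$ above is non-unital. A secondary technical point, to be settled by the standard permanence properties but now in the twisted setting, is to confirm that amenability of a \emph{cocycle} action is stable under the corner/stabilization passage of Lemma \ref{Lem:cvss} and under diagonal tensor products with an arbitrary cocycle action.
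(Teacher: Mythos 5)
Your construction of $(\alpha, \fw)$ and your endgame coincide with the paper's: it likewise obtains an amenable action $\sigma \colon G \acts \mathcal{O}_2 \otimes \IK$ (Theorem 6.1 of \cite{OS} applied to the cone of the translation action, together with Kirchberg--Phillips), forms the corner cocycle action at $1 \otimes e$, and derives the final contradiction from Corollary 3.6 of \cite{OS} (a genuine amenable action on a unital \Cs-algebra forces exactness). The divergence --- and the genuine gap --- is in the middle, where you attach amenability to the \emph{unstabilized twisted} systems. Amenability of a cocycle action with merely Borel data is not defined in the paper or in \cite{AD}, \cite{OS}, and your appeal to ``amenability is preserved under exterior equivalence, as recalled at the beginning of this section'' misapplies that remark: the automatic-continuity argument given there works only for \emph{genuine} actions, where the witnessing map $\fu$ satisfies the untwisted $1$-cocycle identity, so that $s \mapsto \fu_s s$ is a Borel homomorphism into $\cM(A \rca{\gamma} G)^{\rm u}$ and hence strictly continuous. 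For the equivalence between $(\alpha\otimes\beta, \fw\otimes\fx)$ and $(\gamma, 1)$ the witnessing map satisfies a twisted identity, no automatic continuity is available, and transporting approximation nets by a possibly wildly discontinuous Borel map is not covered by any cited permanence result. Consequently none of your three transfer steps --- descending amenability from $(\alpha,\fw)\otimes 1_\IK \sim \delta$ to the cocycle action $(\alpha,\fw)$ itself, amenability of the twisted diagonal system, and transfer along the twisted exterior equivalence to $(\gamma,1)$ --- is justified as stated. Calling this a ``secondary technical point'' underestimates it; it is the crux, and it is exactly what the paper's argument is structured to avoid.

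The paper's remedy is to stabilize \emph{first}, so that amenability is only ever predicated of genuine actions. Supposing $\fw\otimes\fx$ is a coboundary, i.e. $(\alpha\otimes\beta, \fw\otimes\fx)$ is exterior equivalent to a genuine action $\gamma$, one tensors with $1_\IK$ and uses $\IK \cong \IK\otimes\IK$: by Lemma \ref{Lem:cvss}, $(\alpha,\fw)\otimes 1_\IK$ is exterior equivalent to the genuine amenable action $\sigma$, and by the stabilization trick \cite{Sut}, \cite{PR}, $(\beta,\fx)\otimes 1_\IK$ is exterior equivalent to a genuine action. Hence $\gamma \otimes 1_\IK$ is exterior equivalent to a genuine amenable action (tensoring an amenable action with an arbitrary one), and since both sides are now genuine, the connecting Borel $1$-cocycle is automatically strictly continuous, so amenability transfers; thus $\gamma\otimes 1_\IK$, and therefore $\gamma$, is amenable, contradicting Corollary 3.6 of \cite{OS} because $\mathcal{O}_2\otimes A$ is unital. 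If you reroute your middle steps through this stabilize-first order of operations, your proposal becomes the paper's proof; as written, the untwisting of amenability across twisted exterior equivalences is an unproven (indeed undefined) step.
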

\begin{proof}
By applying Theorem 6.1 of \cite{OS} to the cone $\id_{C_0(0, 1]} \otimes {\rm L}$ of the left translation action ${\rm L}\colon G \acts C_0(G)$,
we obtain an amenable action $\sigma \colon G \acts \mathcal{O}_2 \otimes \IK$.
(Here, certainly, we use the classification theorem of Kirchberg--Phillips \cite{Kir}, \cite{Phi}.)
Note that any two nonzero projections in $\mathcal{O}_2\otimes \IK$ are Murray--von Neumann equivalent by \cite{Cun}.
Therefore one can define the corner cocycle action
$(\alpha, \fw)\colon G \acts \mathcal{O}_2$ of $\sigma$ at $1\otimes e$,
where $e$ is any minimal projection in $\IK$.
We will show that $(\alpha, \fw)$ possesses the desired property.

Let $(\beta, \fx) \colon G \acts A$ be any cocycle action on a unital separable \Cs-algebra $A$.
Suppose that $(\alpha\otimes \beta, \fw \otimes \fx)$ is exterior equivalent to a genuine action $\gamma$.
Then $(\alpha\otimes \beta, \fw \otimes \fx)\otimes 1_{\IK}$ is exterior equivalent to $\gamma \otimes 1_{\IK}$.
By Lemma \ref{Lem:cvss}, $(\alpha, \fw)\otimes 1_{\IK}$ is exterior equivalent to $\sigma$.
Also $(\beta, \fx)\otimes 1_{\IK}$ is exterior equivalent to a genuine action by the stabilization trick \cite{Sut}, \cite{PR}.
Since $\IK\cong \IK \otimes \IK$, these observations show that $\gamma \otimes 1_{\IK}$ is exterior equivalent to an amenable action.
This implies the amenability of $\gamma$.
Since $G$ is non-exact, this contradicts to Corollary 3.6 of \cite{OS}.
\end{proof}

\begin{Rem}We remark that both the non-exactness and the unital assumptions of Theorem \ref{Thm:untwist} are essential.
Indeed:
\begin{enumerate}
\item When $G$ is exact, such a cocycle action does not exist.
Indeed by combining the stabilization trick \cite{Sut}, \cite{PR} and the Gabe--Szabo classification theorem \cite{GS2},
one concludes the following statement:
Let $\beta \colon G \acts \mathcal{O}_2$ be an amenable, isometrically shift-absorbing action \cite{GS2}.
Then, for any cocycle action $(\alpha, \fw) \colon G \acts A$ on a simple separable nuclear \Cs-algebra
(in particular when $A=\mathcal{O}_2$),
the $(\alpha\otimes \beta)$-$2$-cocycle $\fw\otimes 1$ is a coboundary.
This follows from the proof of Theorem 5.6 of \cite{GS2}.
The proof is similar to the proof of Theorem 6.10 in \cite{GS2},
but for the reader's convenience, we give a sketch of the proof.

We first note that when $A$ is non-unital, the claim follows from the stabilization trick (see also Remark 1.6 of \cite{GS2}).
To show the claim in the unital case,
we choose an action $\gamma\colon G \acts A\otimes \mathcal{O}_2\otimes \IK $ which is exterior equivalent to the stabilization $(\alpha\otimes \beta, \fw\otimes 1)\otimes 1_{\IK}$ (which exists by the stabilization trick).
Note that for a minimal projection $e\in \IK$,
 the corner cocycle action of $\gamma$ at $p:=1_A \otimes 1_{\mathcal{O}_2}\otimes e$ is exterior equivalent to $(\alpha\otimes \beta, \fw\otimes 1)$.
Let $\eta \colon G \acts \IC$ denote the trivial action.
Then by applying the proof of Theorem 5.6 to $0\in {\rm KK}^G(\eta, \gamma)$,
we obtain a proper cocycle embedding
$(\varphi, \fv)\colon (\IC, \eta) \rightarrow (A\otimes \mathcal{O}_2\otimes \IK, \gamma)$
which represents $0$ in ${\rm KK}^G(\eta, \gamma)$.
This implies that the corner cocycle action of $\gamma$ at $q:=\varphi(1)$
is exterior equivalent to a genuine action.
Since $q$ is Murray--von Neumann equivalent to $p$,
this proves the claim.

\item The conclusion of Theorem \ref{Thm:untwist} fails when $A$ is non-unital,
by the stabilization trick \cite{Sut}, \cite{PR}.\end{enumerate}
\end{Rem}

\begin{Rem}Here we give a few comments on von Neumann algebras.
\begin{enumerate}
\item
Sutherland \cite{Sut} proved that any $2$-cocycle of a cocycle action on a properly infinite von Neumann algebra is a coboundary.
Thus Theorem \ref{Thm:untwist} is a very unique phenomenon in \Cs-algebra theory.
\item
Still it would be an interesting and deep question if an analogous example exists
in the realm of type II$_1$ factors.
We note that cocycle actions which are shown by Popa in \cite{Pop}, Theorem 3.2 to be unable to untwist
readily become coboundaries after tensoring with any type II$_1$ factor (equipped with the trivial action), because they are corner cocycle actions (of Bernoulli shift actions).
\end{enumerate}
\end{Rem}

\section{K$_0$-obstruction: New tensor component $\mathcal{O}_2$ is necessary and minimal}\label{section:O2}
In this section, we show that
the operation appearing in Theorem \ref{Thmint:Cs},
namely adding the new tensor component $\mathcal{O}_2$, is in general necessary to obtain the clean splitting.

We first see that the necessity of a new tensor component already appears in the following elementary and well-known example.
Denote by $\mathbb{M}_{2^\infty}$ the CAR algebra.
We identify $\mathbb{M}_{2^\infty}$ with the gauge fixed point algebra of $\mathcal{O}_2$ in the standard way.
Then, as observed in \cite{RorIrr}, the inclusion $\mathbb{M}_{2^\infty} \subset \mathcal{O}_2$ is \Cs-irreducible.
Let $\alpha \colon \IZ \acts \mathcal{O}_2$ be an automorphism commuting with the gauge action.
(For instance, any quasi-free automorphism commutes with the gauge action.)
Then for any $\chi\in \widehat{\IT} \setminus \{1\}$, the set
\[C:=\left\{x\in \mathcal{O}_2 \rca{\alpha} \IZ: \E_n(x) \in (\mathcal{O}_2)_{\chi^n} {\rm ~for~all~}n\in \IZ \right\}\]
sits in $\Int{\mathbb{M}_{2^\infty}}{\mathcal{O}_2\rca{\alpha} \IZ}$.
(Indeed the subspace family $((\mathcal{O}_2)_{\chi^n})_{n\in \IZ}$ satisfies the axioms in Remark \ref{Rem:partial}.)
We observe that $\E_n(C) \cap (\mathcal{O}_2)^{\rm u}=\emptyset$ for $n\in\IZ\setminus\{0\}$.
To prove this, since $\widehat{\IT}$ is torsion-free,
it suffices to show the next claim: For every $1\neq \kappa \in \widehat{\IT}$, $(\mathcal{O}_2)_\kappa \cap ( \mathcal{O}_2)^{\rm u}=\emptyset$.
To show this claim, choose $n\in \IZ \setminus \{0\}$ satisfying $\kappa(z)=z^n$.
By considering the adjoint $(\mathcal{O}_2)_\kappa^\ast=(\mathcal{O}_2)_{\kappa^{-1}}$ instead of $(\mathcal{O}_2)_\kappa$
if necessary, we may assume $n\geq 1$.
Then standard calculations in $\mathcal{O}_2$ show that
\[(\mathcal{O}_2)_\kappa=\spa \{s_\mu\cdot \mathbb{M}_{2^\infty}: \mu\in \{1, 2\}^n\}.\]
Here and below,
we write $s_1$, $s_2$ for the generators of $\mathcal{O}_2$,
and then as usual, for $\mu =(\mu_1, \mu_2, \ldots, \mu_n)\in \{1, 2\}^n$,
we put $s_\mu:= s_{\mu_1}s_{\mu_2} \cdots s_{\mu_n}$.
Suppose that $(\mathcal{O}_2)_\kappa$ contains a unitary element, say
\[u=\sum_{\mu\in \{1, 2\}^n} s_\mu x_\mu; \quad x_\mu \in \mathbb{M}_{2^\infty}.\]
Then, from the condition $u^\ast u=1$, one has
\[\sum_{\mu\in \{1, 2\}^n} x^\ast_\mu x_\mu=1.\]
The other condition $u u^\ast=1$ yields
\[x_\mu x_\nu^\ast =s_\mu^\ast uu^\ast s_\nu = \delta_{\mu, \nu} 1 \quad {\rm ~for~}\mu, \nu \in \{1, 2\}^n.\]
This contradicts to the finiteness of $\IM_{2^\infty}$.
Thus $(\mathcal{O}_2)_\kappa \cap ( \mathcal{O}_2)^{\rm u}=\emptyset$.
Consequently $C$ does not (naturally) split into $D\rca{{}^\fu \alpha, {\rm d}\fu} \Lambda$ by any
$(D, \Lambda)$-respecting map
\[\fu \colon \Lambda \rightarrow (\mathcal{O}_2)^{\rm u},\quad 
D\in {\rm Int}(\mathbb{M}_{2^\infty} \subset \mathcal{O}_2),\quad \Lambda<\IZ.\]
Indeed if $C=D\rca{{}^\fu \alpha, {\rm d}\fu} \Lambda$,
then one must have $\Lambda=\IZ$,
and $\fu_n\in \E_n(C)=(\mathcal{O}_2)_{\chi^n}$ for $n\in \IZ$, which is a contradiction.

We point out that, by modifying the above construction, one can also give counter-examples
for the other cyclic groups $C_k:=\IZ/k\IZ$; $k=2, 3, \ldots$,
and hence for all non-trivial discrete groups.
To see this, choose any action $\alpha\colon C_k \acts \mathcal{O}_2$ commuting with the gauge action.
Let $\chi$ denote the character on $\IT$ given by $\chi_z=z$ for $z\in \IT$.
For $\bar{n}:=n+k\IZ \in C_k$, set
\[X_{\bar{n}}:=\overline{\rm span}\{(\mathcal{O}_2)_{\chi^{l}}:l\in \bar{n}\}.\]
Then $B:=X_{\bar{0}}$ is a \Cs-subalgebra of $\mathcal{O}_2$ generated by $\{s_\mu:\mu\in \{1, 2\}^k\}$.
Hence $B$ is isomorphic to the Cuntz algebra $\mathcal{O}_{2^k}$.
We show that
\[C:={\rm span}\{X_{\bar{n}} \bar{n}:\bar{n}\in C_k\} \in \Int{\IM_{2^\infty}}{\mathcal{O}_2\rca{\alpha} C_k}\]
gives a counter-example for the clean splitting.
As before, for this, it suffices to show that $X_{\bar{1}} \cap (\mathcal{O}_2)^{\rm u}=\emptyset$.
To see this, note first that
\[X_{\bar{1}}=(s_1 s_1^\ast + s_2 s_2^\ast)X_{\bar{1}}=s_1B+s_2 B.\]
Suppose that $X_{\bar{1}}$ contains a unitary element, say
\[u=s_1b_1 +s_2 b_2; \quad b_1, b_2\in B.\]
Then, similar to the previous case, one has
\[b_1^\ast b_1 + b_2^\ast b_2=1, \quad b_i b_j ^\ast=\delta_{i, j} 1 \quad {\rm for}~ i, j\in \{1, 2\}.\]
Thus $\{b_1, b_2\}$ generates a unital \Cs-subalgebra of $B$ isomorphic to $\mathcal{O}_2$.
This is a contradiction, because $[1_B]_0 \neq 0$ in K$_0(B)$ by \cite{Cun}.
Thus $C$ fails the clean splitting.

Next, by modifying the first construction,
we also give examples which show that our adding tensor component $\mathcal{O}_2$ cannot be replaced by any other unital \Cs-algebra $A$ with $[1_A]_0 \neq 0$ in ${\rm K}_0(A)$. 
Before the construction, we state its consequence.
Since any unital properly infinite \Cs-algebra $A$ with $[1_A]_0=0$
 contains $\mathcal{O}_2$ as a unital \Cs-algebra (by \cite{Cun}),
 this shows that our choice of the tensor component $\mathcal{O}_2$ is a minimal possible choice among \emph{regular} unital simple \Cs-algebras.
We also note that, even considering pathological unital simple \Cs-algebras (see \cite{Ror03}),
the condition $[1_B]_0=0$ for a unital simple \Cs-algebra $B$ forces
the matrix amplification $\IM_k(B)$ for a sufficiently large $k\in \IN$ to contain $\mathcal{O}_2$ as a unital \Cs-algebra.
Thus there is no essential room of an improvement for the choice of $\mathcal{O}_2$.

Let us now construct the desired action.
Choose a transcendental number $x \in (0, 1)$.
Consider the ordered subgroup $G:=\mathbb{Z}[x, x^{-1}] < \mathbb{R}$.
Since $G$ is dense in $\IR$, it is a simple dimension group as proved by Elliott (cf.~the Effros--Handelman--Shen theorem \cite{EHS}).
Fix a unital (simple) AF-algebra $B$ with
\[({\rm K}_0(B), {\rm K}_0(B)_+, [1_B]_0) \cong (G, G_+, 1).\]
We identify K$_0(B)$ with $G$ via this isomorphism.
Take $p\in B^{\rm p}$ with $[p]_0=x$.
Clearly the map $y\mapsto xy$ defines an order automorphism of $G$.
Hence by the Bratteli--Elliott classification theorem, one can choose an isomorphism
$\rho \colon B \rightarrow pBp$. 
Let $C:=B \rca{\rho} \IN$ be the crossed product of the corner endomorphism $\rho$ (see \cite{Pas} for its definition and basic properties).
We note that the inclusion $B \subset C$ is \Cs-irreducible.
To see this, recall that by Theorem 2 of \cite{Pas},
the stabilized inclusion $\IK \otimes B \subset \IK \otimes C$ is isomorphic to $\IK\otimes B \subset (\IK\otimes B)\rca{\tilde{\rho}} \IZ$,
where $\tilde{\rho} \in {\rm Aut}(\IK \otimes B)$ is an automorphism extension of $\rho$.
Since $\rho_{\ast, 0} \in {\rm Aut}({\rm K}_0(B))$ has infinite order,
 the $\IZ$-action $\tilde{\rho}$ is pointwise outer.
By the proof of Kishimoto's simplicity theorem \cite{Kis},
the inclusion $\IK\otimes B \subset (\IK\otimes B)\rca{\tilde{\rho}} \IZ$, and hence $B\subset C$,
is \Cs-irreducible.

Let $s\in C$ denote the canonical implementing isometry element of $\rho$. Note that $ss^\ast =p$.
Let $\sigma \colon \IT \acts C$ be the gauge action.
(Namely, the action given by $\sigma_z(b)=b$ and $\sigma_z(s)=zs$ for $b\in B$ and $z\in \IT$.)
We recall that $A$ is a unital \Cs-algebra with $[1_A]_0 \neq 0$.
On $C$ and $A \otimes C$, we equip with the $\IT$-actions $\sigma$ and $\id_A\otimes \sigma$ respectively.
Let $\chi$ be the character on $\IT$ given by $\chi_z=z$.
Since one has the completely contractive projection $\E^\IT_\chi \colon C \rightarrow C_\chi$,
it follows from the description of the algebraic elements in the formula (*) in page 114 of \cite{Pas} that
$C_\chi=Bs$.
This yields $(A\otimes C)_\chi=(A \otimes B) \cdot (1 \otimes s)$.
We show $(A\otimes C)_\chi \cap(A\otimes C)^{\rm u} = \emptyset$.
Suppose that one has a unitary element $u$ of the form
\[u=x(1\otimes s); \quad x\in A \otimes B.\]
Then from the condition $u u^\ast =1$, one has $x (1\otimes p) x ^\ast=1$.
Also, from $u^\ast u=1$, one obtains $(1\otimes s)^\ast x^\ast x (1\otimes s)=1$.
These equations show that $v:=x (1\otimes p)\in A\otimes B$ satisfies
$v^\ast v =1 \otimes p$ and $vv^\ast =1$.
This shows that $[1\otimes p]_0=[1]_0$ in K$_0(A\otimes B)$.
At the same time, since $B$ is an AF-algebra and ${\rm K}_0(B)$ has the $\mathbb{Z}$-basis $(x^n)_{n\in \IZ}$,
by the continuity of the K$_0$-groups,
 one has a group isomorphism
\[{\rm K}_0(A\otimes B) \cong {\rm K}_0(A) \mathop{\otimes}\limits_{\mathbb{Z}} \left(\bigoplus_{\IZ} \IZ\right) \]
which sends $[1_A \otimes p]_0$ to $[1_A]_0 \otimes \delta_1$ and $[1]_0$ to $[1_A]_0 \otimes \delta_0$.
Since $[1_A]_0\neq 0$, these two elements are different.
This is a contradiction.
Hence $(A\otimes C)_\chi \cap (A\otimes C)^{\rm u}=\emptyset$.

Thus for any action $\gamma \colon \IZ \acts C$ commuting with $\sigma$
and for any action $\alpha \colon \IZ \acts A$,
\begin{align*}
D:=&\{x\in (A\otimes C)\rca{\alpha\otimes \gamma} \IZ: \E_n(x)\in (A\otimes C)_{\chi^n}{\rm~ for~}n\in \IZ\}\\
&\in \Int{A\otimes B}{(A\otimes C)\rca{\alpha\otimes \gamma} \IZ}
\end{align*}
fails the clean splitting by the same reason as the previous example.

\section{Von Neumann algebra case}\label{section:vn}
In this section, we establish the von Neumann algebra version of our splitting theorem.
Throughout this section, to avoid inessential complexity, without stated, our von Neumann algebras (except ultraproducts and their relatives) are assumed to have a separable predual.

For von Neumann algebras, inspired by the studies of group actions (see \cite{Con}, \cite{Jon80}, \cite{Oc}, \cite{KST} etc.) and subfactor theory (see \cite{Kaw}, \cite{Pop95}, etc.),
as in \cite{SuzCMP},
we formulate central freeness of $\Gamma$-von Neumann subalgebras in terms of the relative central sequence algebras.
For this purpose, throughout this section, we fix a free ultrafilter $\omega$ on $\IN$.
We refer the reader to Section 4 (between Remark 4.2 and Definition 4.3) of \cite{SuzCMP} for the definition of the relative central sequence algebras $N^\omega \cap M_\omega$,
and to \cite{AH} for basic facts on ultraproducts of von Neumann algebras.
\begin{Def}\label{Def:cfvn}
Let $N\subset M$ be a von Neumann algebra inclusion.
We say that $\alpha \in {\rm Aut}(M)$ with $\alpha(N)=N$
is \emph{centrally free for} $N\subset M$
if there is a partition of unity $(p_n)_{n\in \IN}$ in $(N^\omega \cap M_\omega)^{\rm p}$
with $p_n\alpha(p_n)=0$ for $n\in \IN$.
We say that an inclusion $N \subset (M, \alpha, \fw)$ of $($twisted$)$ $\Gamma$-von Neumann algebras is \emph{centrally $\Gamma$-free}
if every $\alpha_s$; $s\in \Gamma \setminus \{e\}$, is centrally free for $N\subset M$.
\end{Def}
By Connes's non-commutative Rohlin lemma (see Theorem 1.6 and Lemma 2.6 in Chapter XVII of \cite{Tak3}), the automorphism $\alpha \in {\rm Aut}(M)$ with $\alpha(N)=N$
is centrally free for $N\subset M$ if and only if
the induced automorphism on $N^\omega \cap M_\omega$ is properly outer.
In particular, when $N=M$, the definition coincides with Ocneanu's central freeness for automorphisms on von Neumann algebras \cite{Oc}.

We use the following version of Lemma 4.7 of \cite{SuzCMP}.
This is a consequence of the standard reindexation argument.
\begin{Lem}\label{Lem:reind}
Let $N\subset (M, \alpha, \fw)$ be a centrally $\Gamma$-free von Neumann algebra inclusion.
Then for any finite subset $F\subset \Gamma \setminus \{e\}$,
there is a partition of unity $(p_n)_{n\in \IN}$ in $(N^\omega \cap M_\omega)^{\rm p}$ with
$p_n \alpha_s(p_n)=0$ for $n\in \IN$ and $s\in F$.
\end{Lem}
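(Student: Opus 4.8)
The plan is to reduce everything to a genuine action on the relative central sequence algebra $\mathcal{N}:=N^\omega \cap M_\omega$ and then to merge the single-element data provided by central freeness through a reindexation. First I would observe that the twisting is invisible on $\mathcal{N}$: since each $\fw_{s,t}$ lies in $N\subset M$ and every element of $M_\omega$ asymptotically commutes with $M$, the inner automorphisms $\ad(\fw_{s,t})$ act trivially on $M_\omega$, hence on $\mathcal{N}$. Consequently the automorphisms $\bar\alpha_s$ induced on $\mathcal{N}$ by $\alpha_s$ (these preserve $\mathcal{N}$ because $\alpha_s(N)=N$) satisfy $\bar\alpha_s\bar\alpha_t=\bar\alpha_{st}$, so $s\mapsto\bar\alpha_s$ is a genuine action $\Gamma\acts\mathcal{N}$. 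Under this reduction the hypothesis of central $\Gamma$-freeness says exactly that for each single $s\in\Gamma\setminus\{e\}$ there is a partition of unity $(p_n)_n$ in $\mathcal{N}^{\rm p}$ with $p_n\bar\alpha_s(p_n)=0$, and the conclusion is the analogous statement with the single $s$ replaced by the finite set $F$.

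I would then prove the finite-set statement by induction on $|F|$, the case $|F|=1$ being the hypothesis. Writing $F=\{s_1,\dots,s_k\}$, suppose a partition of unity $(p_n)_n$ in $\mathcal{N}^{\rm p}$ has already been produced with $p_n\bar\alpha_{s_i}(p_n)=0$ for all $n$ and all $i<k$. Central freeness at $s_k$ furnishes a partition of unity $(q_m)_m$ in $\mathcal{N}^{\rm p}$ with $q_m\bar\alpha_{s_k}(q_m)=0$. The crucial step is to apply the standard reindexation lemma for relative central sequence algebras to replace $(q_m)_m$ by a reindexed copy lying, inside $\mathcal{N}$, in the relative commutant of the separable von Neumann subalgebra $P$ generated by $\{p_n:n\in\IN\}$ together with all $\{\bar\alpha_{s_i}(p_n):i\le k,\ n\in\IN\}$. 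Because the reindexation is implemented by a unital $\ast$-homomorphism $\mathcal{N}\to\mathcal{N}\cap P'$ that intertwines each $\bar\alpha_s$ (the induced action acts entrywise on representing sequences, so reorganizing indices commutes with it), the reindexed family, still denoted $(q_m)_m$, remains a partition of unity with $q_m\bar\alpha_{s_k}(q_m)=0$ and now also commutes with $P$.

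Finally I would take the common refinement $r_{n,m}:=p_nq_m$. Since the $q_m$ commute with the $p_n$ and with every $\bar\alpha_{s_i}(p_n)$, each $r_{n,m}$ is a projection, the $r_{n,m}$ are pairwise orthogonal, and $\sum_{n,m}r_{n,m}=\bigl(\sum_n p_n\bigr)\bigl(\sum_m q_m\bigr)=1$, so after identifying $\IN\times\IN$ with $\IN$ this is a partition of unity in $\mathcal{N}^{\rm p}$. For $i<k$, commuting $q_m$ past $\bar\alpha_{s_i}(p_n)$ gives $r_{n,m}\bar\alpha_{s_i}(r_{n,m})=p_n\bar\alpha_{s_i}(p_n)\,q_m\bar\alpha_{s_i}(q_m)=0$ because $p_n\bar\alpha_{s_i}(p_n)=0$; for $i=k$ the same manipulation yields $r_{n,m}\bar\alpha_{s_k}(r_{n,m})=p_n\bar\alpha_{s_k}(p_n)\,q_m\bar\alpha_{s_k}(q_m)=0$ using $q_m\bar\alpha_{s_k}(q_m)=0$. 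This closes the induction. I expect the only delicate point to be the reindexation step: one must know that the relative central sequence algebra absorbs itself in the reindexation sense, and that the absorbing embedding can be chosen to commute with a prescribed separable subalgebra while intertwining the induced $\Gamma$-action, so that both the orthogonality relation of $(q_m)_m$ and its new commutation with $P$ hold at once.
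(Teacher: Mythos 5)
Your proof is correct and is essentially the paper's own argument: the paper gives no details for this lemma, simply invoking ``the standard reindexation argument'' (as a version of Lemma 4.7 of \cite{SuzCMP}), and what you write out---killing the $2$-cocycle on $N^\omega\cap M_\omega$, then merging the single-automorphism partitions of unity by reindexing the new partition into the commutant of the previously constructed separable data and taking the product refinement---is precisely that argument. The only cosmetic correction is that the reindexing $\ast$-homomorphism is defined on the separable von Neumann subalgebra generated by $\{q_m\}$ and its $\bar\alpha_s$-translates (not on all of the non-separable algebra $N^\omega\cap M_\omega$), and one should note that it can be chosen to preserve $\varphi^\omega$ on projections, which is what guarantees that the reindexed family still sums to $1$.
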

For a faithful normal state $\varphi$ on a von Neumann algebra $M$, we set
\[\|x\|_\varphi:=\sqrt{\varphi(x^\ast x)}\]
for $x\in M$. This defines a norm on $M$.
On $(M)_1$, the topology of
$\|\bullet\|_\varphi$ coincides with the $\sigma$-strong operator topology.

The next lemma is the von Neumann algebra version of Lemma \ref{Lem:cf}.
\begin{Lem}\label{Lem:cfv}
Let $N\subset (M, \alpha, \fw)$ be a centrally $\Gamma$-free inclusion of $\Gamma$-von Neumann algebras.
Then for any $s\in \Gamma$ and any $C\in \Int{N}{M\vnc{\alpha, \fw} \Gamma}$,
one has $\E_s(C)s^\fw \subset C$.
\end{Lem}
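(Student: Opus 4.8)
The plan is to transcribe the proof of Lemma \ref{Lem:cf} into the von Neumann framework, replacing the Hilbert-module elements produced by central $\Gamma$-freeness with a partition of unity in the relative central sequence algebra supplied by Lemma \ref{Lem:reind}. Fix a faithful normal state $\varphi$ on $M\vnc{\alpha,\fw}\Gamma$. Since $C$ is $\sigma$-strongly closed and the $\|\cdot\|_\varphi$-topology agrees with the $\sigma$-strong topology on bounded sets, it suffices to show that for each $c\in (C)_1$, each $s\in\Gamma$, and each $\epsilon>0$, there is an element of $C$ within $\|\cdot\|_\varphi$-distance $\epsilon$ of $\E_s(c)s^\fw$.

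First I would reduce to a finite Fourier sum. Using $\sigma$-strong density of the algebraic twisted crossed product together with Kaplansky's density theorem, choose $\tilde c$ of the form $\tilde c=\sum_{t\in F}b_{ts}(ts)^\fw$ with $F\subset\Gamma$ finite, $e\in F$, $b_{ts}\in M$, $\|\tilde c\|\le 1$, and $\|c-\tilde c\|_\varphi$ as small as desired; since $\E_s$ is normal, hence $\sigma$-strongly continuous on bounded sets, $\E_s(\tilde c)=b_s$ then approximates $\E_s(c)$ in $\|\cdot\|_\varphi$. Apply Lemma \ref{Lem:reind} to the finite set $F\setminus\{e\}$ to obtain a partition of unity $(p_n)_{n\in\IN}$ in $(N^\omega\cap M_\omega)^{\rm p}$ with $p_n\alpha_t(p_n)=0$ for all $n$ and all $t\in F\setminus\{e\}$, represented by sequences $(p_n^{(k)})_k$ in $N$. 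For each $k$ set $y_k:=\sum_{n}p_n^{(k)}\,c\,\alpha_s^{-1}(p_n^{(k)})$. Because $N$ is $\Gamma$-invariant we have $p_n^{(k)},\alpha_s^{-1}(p_n^{(k)})\in N\subset C$, and the sum of the mutually orthogonal pieces converges $\sigma$-strongly into the $\sigma$-strongly closed algebra $C$; hence each $y_k\in C$ with $\|y_k\|\le\|c\|$.

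It then remains to prove $\lim_{k\to\omega}\|y_k-\E_s(c)s^\fw\|_\varphi\le\epsilon$. Replacing $c$ by $\tilde c$ at the cost of controllable $\|\cdot\|_\varphi$-errors and pushing $\alpha_s^{-1}(p_n^{(k)})$ through $(ts)^\fw$ by the covariance relation $(ts)^\fw\,\alpha_s^{-1}(p_n^{(k)})=\fw_{t,s}^{\ast}\alpha_t(p_n^{(k)})\fw_{t,s}\,(ts)^\fw$, the coefficient of $(ts)^\fw$ in $\sum_n p_n^{(k)}\tilde c\,\alpha_s^{-1}(p_n^{(k)})$ becomes $\sum_n p_n^{(k)}b_{ts}\,\alpha_t(p_n^{(k)})$ up to commutators of the $p_n^{(k)}$ with $b_{ts}$ and $\fw_{t,s}$ that vanish along $\omega$ by asymptotic centrality. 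For $t\in F\setminus\{e\}$ this tends to $0$ along $\omega$ because $p_n\alpha_t(p_n)=0$, while for $t=e$ the relations $\sum_n p_n=1$ and the asymptotic centrality of the $p_n$ give $\sum_n p_n^{(k)}b_s\,p_n^{(k)}\to b_s$. Hence $y_k\to\E_s(\tilde c)s^\fw$ in $\|\cdot\|_\varphi$ along $\omega$, and the desired estimate follows. Since the $y_k$ lie in a fixed norm-bounded subset of $C$ and $C$ is $\|\cdot\|_\varphi$-closed there, we conclude $\E_s(c)s^\fw\in C$.

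The main obstacle is the bookkeeping in $\|\cdot\|_\varphi$ rather than in operator norm: unlike the \Cs-case, where the Cauchy--Schwarz inequality for Hilbert modules controls the tails of the averaging sums in norm, here one must verify that $z\mapsto\sum_n p_n^{(k)}z\,\alpha_s^{-1}(p_n^{(k)})$ is asymptotically $\|\cdot\|_\varphi$-contractive and that the cross terms vanish in $\|\cdot\|_\varphi$. This hinges on the standard fact that relative central sequences asymptotically lie in the centralizer of $\varphi$, i.e.\ commute with the modular automorphism group in the $\omega$-limit (see \cite{AH}), which legitimizes commuting the projections $p_n^{(k)}$ past the state and interchanging the infinite orthogonal sum with the $\omega$-limit. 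Granting this, the computation is a routine transcription of the proof of Lemma \ref{Lem:cf}.
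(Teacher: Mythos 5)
Your proposal follows the paper's route step for step (Kaplansky reduction to a finite Fourier sum, the partition of unity from Lemma \ref{Lem:reind}, compression by representing projections in $N$, and a state-norm estimate), but it has one genuine gap: the choice of state. You fix an \emph{arbitrary} faithful normal state $\varphi$ on $M\vnc{\alpha, \fw}\Gamma$ and then invoke, as a ``standard fact,'' that elements of $N^\omega\cap M_\omega$ asymptotically lie in the centralizer of $\varphi$. That fact (from Ocneanu/\cite{AH}) is available only for normal states \emph{on $M$}: the sequences $(p^{(k)}_n)_k$ are central relative to $M$, not relative to the crossed product --- indeed they are maximally non-central there, since $\ad(s^\fw)$ carries $p_n$ to $\alpha_s(p_n)$ with $p_n\alpha_s(p_n)=0$. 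Consequently the two steps you hang on this fact --- the asymptotic $\|\cdot\|_\varphi$-contractivity of $z\mapsto\sum_n p^{(k)}_n z\,\alpha_s^{-1}(p^{(k)}_n)$ (needed to pass from $\tilde c$ back to $c$) and the interchange of the orthogonal sum with the $\omega$-limit --- are unjustified for a general $\varphi$ on the crossed product; already for a single projection $q$ the inequality $\varphi(qz^\ast zq)\le\varphi(z^\ast z)$ can fail for a non-tracial normal state. The paper avoids this by taking $\psi:=\varphi\circ\E$ with $\varphi$ faithful normal on $M$: the $M$-bimodularity $\E(qxq)=q\E(x)q$ pushes every estimate down into $M$, where the asymptotic centralizer property of $N^\omega\cap M_\omega$ does hold, and this is exactly what yields the key identity $\lim_{m\to\omega}\psi(q_m a q_m)=\lim_{m\to\omega}\psi(q_m a)\ge 0$ for positive $a$ in the crossed product. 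So you must specialize your state to $\varphi\circ\E$; this is a one-line repair, but as written the crucial estimate has no valid justification.

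A secondary, related deviation: you use the full infinite sum $y_k=\sum_{n\in\IN}p^{(k)}_n c\,\alpha_s^{-1}(p^{(k)}_n)$, which requires producing simultaneously orthogonal infinite lifts and then interchanging the infinite sum with the $\omega$-limit --- again deferred to the same unproved fact. The paper instead truncates: choose $l$ with $\sum_{n>l}\|p_n\|_{\varphi^\omega}<\epsilon$, lift only the finitely many $p_1,\dots,p_l$ (arranged mutually orthogonal for each index), and absorb the tail into the $\epsilon$-bookkeeping; the resulting $d$ is then a finite sum, so its membership in $C$ is immediate. Your cocycle bookkeeping, by contrast, is fine: the relation $(ts)^\fw\alpha_s^{-1}(p)=\fw_{t,s}^\ast\alpha_t(p)\fw_{t,s}(ts)^\fw$ is correct, and since $\fw_{t,s}\in M^{\rm u}$ the commutators vanish asymptotically (the paper sidesteps even this by expanding in the basis $t^\fw s^\fw$ rather than $(ts)^\fw$, so that no cocycle appears in the coefficients). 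With the state fixed to $\varphi\circ\E$ and the truncation in place, your argument becomes the paper's proof.
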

\begin{proof}
We fix a faithful normal state $\varphi$ on $M$.
Put $\psi:=\varphi \circ \E$, which is a faithful normal state on $M\vnc{\alpha, \fw} \Gamma$.
Let $s\in \Gamma$, $C\in \Int{N}{M\vnc{\alpha, \fw} \Gamma}$, and $c\in (C)_1$ be given.
It suffices to show the following claim:
For any $\epsilon>0$, there is $d\in (C)_1$ with
$\|\E_s (c)s^\fw - d\|_{\psi}<2\epsilon$.
We prove this claim.

We apply the Kaplansky density theorem to pick $c_0\in M \rtimes_{{\rm alg}, \alpha, \fw} \Gamma$ with
$\|c-c_0\|_{\psi}<\epsilon$, $\|c_0\|\leq 1$.
Write
\[c_0=\sum_{t\in F} a_t t^\fw s^\fw,\]
where $F=\{e, s_1, s_2, \ldots, s_k\}\subset \Gamma$ with $|F|=k+1$, $a_t\in M$ for $t\in F$, and $a_e=\E_s(c)$. 
If $k=0$, the claim is trivial.
Otherwise we apply Lemma \ref{Lem:reind} to $\{s_1, \ldots, s_k\} \subset \Gamma \setminus \{e\}$.
Then we obtain a partition of unity $(p_n)_{n\in \IN}$ in $(N^\omega \cap M_\omega)^{\rm p}$ with
$p_n \alpha_{s_i}(p_n)=0$ for $n\in \IN$ and $i=1, 2, \ldots, k$.
This implies
\[\sum_{n\in \IN} p_n a_{t}\alpha_{t}(p_n)=
 \left\{
\begin{array}{ll}
a_e & \text{when } t=e,\\
0 & \text{when } t\in \{s_1, \ldots, s_k\}.
\end{array}
\right.\]

Choose $l\in \IN$ with
$\sum_{n >l} \|p_n\|_{\varphi^\omega}<\epsilon$.
Pick a representing sequence $(p^{(n)}_m)_{m\in \IN} \subset N^{\rm p}$ of $p_n$ for each $n \leq l$.
We may arrange that the sequence $(p^{(n)}_m)_{n =1}^l$ is mutually orthogonal for each $m\in \IN$.
Then, for all $m\in \IN$ sufficiently close to $\omega$, we have
\[\| \sum_{n=1}^ lp^{(n)}_m c_0 \alpha_s^{-1}(p^{(n)}_m) - a_e s^{\fw}\|_{\psi}<\epsilon.\]

Since $p_n\in M_\omega$ for $n\in \IN$, one has
\begin{align*}
\lim_{m\rightarrow \omega} \|\sum_{n=1}^l p^{(n)}_m (c_0-c)\alpha_s^{-1}(p^{(n)}_m)\|_{\psi}^2 
&\leq \lim_{m\rightarrow \omega}\sum_{n=1}^l  \psi(\alpha_s^{-1}(p^{(n)}_m)(c_0-c)^\ast(c_0-c)\alpha_s^{-1}(p^{(n)}_m))\\
&= \lim_{m\rightarrow \omega}\sum_{n=1}^l  \psi(\alpha_s^{-1}(p^{(n)}_m)(c_0-c)^\ast(c_0-c))\\
 &\leq\|c_0-c\|_{\psi}^2<\epsilon^2.
\end{align*}
Here for the last inequality, we use the following fact:
For any $(q_n)_{n\in \IN} \in \ell^\infty(\IN, N)^{\rm p}$ representing an element of $N^\omega \cap M_\omega$ and any $a\in M\vnc{\alpha, \fw} \Gamma$,
one has
\[\lim_{n \rightarrow \omega} \psi(q_n a) = \lim_{n\rightarrow \omega} \psi(q_n a q_n) \geq 0.\]
Therefore, for all $m\in \IN$ sufficiently close to $\omega$, we obtain
\[\| \sum_{n=1}^lp^{(n)}_m (c_0-c) \alpha_s^{-1}(p^{(n)}_m) \|_{\psi}<\epsilon.\]

Thus, for a suitable $m\in \IN$,
the element
\[d:=\sum_{n=1}^l p^{(n)}_m c \alpha_s^{-1}(p^{(n)}_m)\in (C)_1\]
satisfies the desired condition $\|d-\E_s(c)s^\fw\|_\psi<2\epsilon$.
\end{proof}

The next two lemmas are von Neumann algebra versions of Lemma \ref{Lem:unitary}.
Since the K$_0$-group obstruction is very little in factors,
we only need mild assumptions on the Murray--von Neumann type instead of the $\mathcal{O}_2$-absorption.
(However the statement fails in general. See Remark \ref{Rem:countexv}.)
\begin{Lem}\label{Lem:unitaryv}
Let $N\subset (M, \alpha, \fw)$ be a centrally $\Gamma$-free irreducible $\Gamma$-subfactor.
Assume that $N$ is of infinite type.
Then for any $s\in \Gamma$ and any $C\in \Int{N}{M\vnc{\alpha, \fw} \Gamma}$,
one has either $\E_s(C)=\0$ or $\E_s(C)\cap M^{\rm u}\neq \emptyset$.
\end{Lem}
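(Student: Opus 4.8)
The plan is to follow the strategy of Lemma \ref{Lem:unitary}, but to replace the $K_0$-vanishing input (which in the \Cs-case made every nonzero projection equivalent to the unit) by a structural argument valid in any properly infinite factor. Throughout set $D:=C\cap M$ and $X:=\E_s(C)$. First I would record the algebraic skeleton. Since $N\subset M$ is irreducible, every member of $\Int{N}{M}$ is a factor; as $N\subseteq D$ and $N$ is properly infinite, $D$, and hence $\alpha_s(D)$, is a properly infinite factor with unit $1_M$. Using Lemma \ref{Lem:cfv} one has $\E_s(C)s^\fw\subseteq C$, whence $X=\{y\in M: ys^\fw\in C\}$, a $\sigma$-weakly closed subspace. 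A direct computation with the relation $s^\fw b=\alpha_s(b)s^\fw$ then shows $DX\subseteq X$, $X\alpha_s(D)\subseteq X$, $XX^\ast\subseteq D$ and $X^\ast X\subseteq\alpha_s(D)$; since $X^\ast X\subseteq\alpha_s(D)$ and $X$ is a $\sigma$-weakly closed right $\alpha_s(D)$-module, the polar part of each $y\in X$ again lies in $X$, so $X$ is generated by partial isometries. Assuming $\E_s(C)\neq\0$, a standard factoriality argument shows that the left support of $X$ in $D$ and its right support in $\alpha_s(D)$ are both equal to $1$.

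Next I would manufacture a coisometry inside $X$. Taking a maximal family $(v_i)_i$ of partial isometries in $X$ with mutually orthogonal left supports $\hat e_i\in D$, fullness of the left support forces $\sum_i\hat e_i=1$. Because $\alpha_s(D)$ is properly infinite and $\sigma$-finite, I can choose isometries $(T_i)_i$ in $\alpha_s(D)$ with $T_i^\ast T_j=\delta_{i,j}1$ and mutually orthogonal ranges, and set $\tilde w:=\sum_i v_i T_i^\ast$, which converges strongly and lies in $X$. The orthogonality of the $\hat e_i$ gives $\tilde w\tilde w^\ast=\sum_i\hat e_i=1$, so $\tilde w$ is a coisometry and $p:=\tilde w^\ast\tilde w\in\alpha_s(D)$ is a projection. (The role of the $T_i$ is exactly to disjointify the sources $v_i^\ast v_i$, which need not be orthogonal.)

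The crux is to promote this coisometry to a unitary, that is, to show $p\sim 1$ in $\alpha_s(D)$. Here is the key point, and the place where infiniteness is genuinely used: knowing $p\sim 1$ in $M$ does not suffice, since a projection can be infinite in $M$ yet finite in the subfactor $\alpha_s(D)$. Instead I would consider $\Psi\colon p\alpha_s(D)p\to D$, $\Psi(x):=\tilde w x\tilde w^\ast$. Using $X\alpha_s(D)\subseteq X$, $XX^\ast\subseteq D$ and $\tilde w\tilde w^\ast=1$, this is a well-defined unital $\ast$-homomorphism on the corner, with inverse $d\mapsto\tilde w^\ast d\tilde w$, hence a $\ast$-isomorphism $p\alpha_s(D)p\cong D$. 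Since $D$ is properly infinite, so is its copy $p\alpha_s(D)p$; thus $p$ is a properly infinite projection of $\alpha_s(D)$, and in a $\sigma$-finite properly infinite factor every infinite projection is equivalent to the unit. Choosing an isometry $T\in\alpha_s(D)$ with $T^\ast T=1$ and $TT^\ast=p$, the element $u:=\tilde w T\in X$ satisfies $u^\ast u=T^\ast pT=1$ and $uu^\ast=\tilde w p\tilde w^\ast=\tilde w\tilde w^\ast=1$, so $u\in\E_s(C)\cap M^{\rm u}$, as desired.

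The main obstacle is precisely the passage $p\sim 1$ \emph{in} $\alpha_s(D)$: the naive \Cs-style argument only controls equivalence inside the ambient factor $M$, whereas the crossed-product bookkeeping requires the implementing unitary to be corrected within $\alpha_s(D)$. The isomorphism $p\alpha_s(D)p\cong D$ is what converts the global proper infiniteness of $D$ into infiniteness of $p$ relative to $\alpha_s(D)$; everything else is bimodule manipulation together with the comparison theory of $\sigma$-finite properly infinite factors.
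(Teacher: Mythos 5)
Your argument is correct, but it takes a genuinely different route from the paper's own proof. Both proofs start identically: by Lemma \ref{Lem:cfv}, $X:=\E_s(C)$ satisfies $Xs^\fw=(Ms^\fw)\cap C$, hence is a weak-$\ast$ closed $D$--$\alpha_s(D)$ bimodule with $XX^\ast\subset D$ and $X^\ast X\subset \alpha_s(D)$, where $D=C\cap M$. But the paper then exploits the infiniteness of $N$ through an \emph{amplification trick}: it picks partial isometries $(v_n)_{n\in\IN}$ in $N$ with common source $p$ and $\sum_n v_nv_n^\ast=1$, producing an isomorphism $\Theta\colon M\rightarrow \IB\btimes M_p$, $\Theta(x)=[v_n^\ast x v_m]_{n,m}$, which carries every weak-$\ast$ closed $(N,N)$-subbimodule $X$ to $\IB\btimes X_p$. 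It then finds a nonzero constant-diagonal element $1_\IB\otimes d\in\Theta(\E_s(C))$, turns it into a partial isometry $y=1\otimes f(|d^\ast|)d$ by functional calculus, and observes that the supports $yy^\ast$ and $y^\ast y$ have the form $1_\IB\otimes q$, which are \emph{automatically} equivalent to $1$ in the amplified factors $\IB\btimes (C\cap M)_p$ and $\IB\btimes(\alpha_s(C\cap M))_p$ by the Comparison Theorem; correcting $y$ on both sides by isometries $v,w$ in these algebras yields the unitary $\Theta^{-1}(v^\ast y w)$. You replace the amplification by an intrinsic exhaustion argument (maximal family of partial isometries in $X$ with orthogonal left supports, made source-orthogonal by the $T_i$, so $\tilde w=\sum_i v_iT_i^\ast$ does converge as claimed and lies in the weak-$\ast$ closed $X$), and your key new ingredient relative to the paper is the corner isomorphism $p\,\alpha_s(D)\,p\cong D$ implemented by $\tilde w$, which transfers proper infiniteness of $D$ to $p$ and settles exactly the point you correctly flag: the equivalence $p\sim 1$ must hold in $\alpha_s(D)$, not merely in $M$. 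The paper's amplification sidesteps this subtlety because constant-diagonal projections $1\otimes q$ are automatically properly infinite in the amplified subfactors, at the price of verifying $\Theta(X)=\IB\btimes X_p$ for all such bimodules. Your route is somewhat more self-contained, uses the hypothesis on $N$ only through the proper infiniteness of the intermediate factors $D$ and $\alpha_s(D)$ (inherited since they contain the infinite subfactor $N$ with the same unit, and are factors by irreducibility), and is in spirit closer to the paper's Zorn-type proof of the finite case (Lemma \ref{Lem:type}) than to its actual proof of this lemma.
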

\begin{proof}
Pick a sequence $(v_n)_{n\in \IN}$ of partial isometry elements in $N$ with 
\[p:=v_1 v_1^\ast =v_n ^\ast v_n \quad {\rm~for~all~}n\in \IN, \quad \sum_{n\in \IN} v_n v_n^\ast =1.\]
For a weak-$\ast$ closed $(N, N)$-subbimodule $X \subset M$, we put $X_p:=p X p$.
Define an isomorphism $\Theta \colon M \rightarrow \IB \btimes M_p$ to be
\[\Theta(x):=[v_n^\ast x v_m]_{n, m\in \IN}.\]
Then $\Theta(X)=\IB\btimes X_p$ (the weak-$\ast$ closure of the algebraic tensor product $\IB\odot X_p$ in $\IB\btimes M_p$) for all such subbimodules $X \subset M$.

Now assume that $\E_s(C) \neq \0$ for a given $s\in \Gamma$.
Note that $\E_s(C)s^\fw\subset C$ by Lemma \ref{Lem:cfv}.
Hence $\E_s(C)$ is a weak-$\ast$ closed $(N, N)$-subbimodule of $M$.
Hence one can find a nonzero element $d\in M_p$ with $x:=1_{\IB} \otimes d \in \Theta(\E_s(C))$.
Since $xx^\ast \in \Theta(C \cap M)$,
we have $dd^\ast \in (C \cap M)_p$.
Consider the Borel function
\[f(t):=t^{-1}\chi_{[\|d\|/2, \infty)}(t)\]
on $[0, \infty)$.
Set $y:=1\otimes f(|d^\ast|)d \in \Theta(\E_s(C))$.
Then $y$ is a nonzero partial isometry element.
Hence $yy^\ast$ and $y^\ast y$ are nonzero projections
in $1_{\IB}\otimes (C \cap M)_p$ and in $1_{\IB}\otimes (\alpha_s(C \cap M))_p$ respectively.
Since $(C\cap M)_p$ is a factor, by the Comparison Theorem, one can find isometry elements $v \in \IB\btimes (C \cap M)_p$ and $w\in \IB\btimes (\alpha_s(C \cap M))_p$
with $vv^\ast = yy^\ast$, $ww^\ast = y^\ast y$.
These relations yield $\Theta^{-1}(v^\ast y w) \in \E_s(C)\cap M^{\rm u}$.
\end{proof}

\begin{Lem}\label{Lem:type}
Let $N \subset (M, \alpha, \fw)$ be an irreducible, centrally $\Gamma$-free $\Gamma$-subfactor.
Assume that $M$ is of finite type.
Then for any $C\in \Int{N}{M \vnc{\alpha, \fw} \Gamma}$ and any $s\in \Gamma$,
one has
either $\E_s(C)=\0$ or $\E_s(C)\cap M^{\rm u}\neq \emptyset$.
\end{Lem}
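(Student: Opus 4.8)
The plan is to run the finite-type analogue of the argument for Lemma~\ref{Lem:unitaryv}, replacing the use of proper infiniteness (where every nonzero projection is Murray--von Neumann equivalent to $1$, so partial isometries can be padded out to isometries) by the trace-theoretic fact that in a finite factor a co-isometry is automatically a unitary. So there is no stabilization step; instead the full partial isometry is produced directly.

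Fix $s\in\Gamma$ with $\E_s(C)\neq\0$ and write $X:=\E_s(C)$, $P:=C\cap M$, and $\tilde P:=\alpha_s(P)$. By Lemma~\ref{Lem:cfv} one has $Xs^\fw\subset C$, and I would first record the resulting structure: $X$ is weak-$\ast$ closed, $XX^\ast\subset P$, $X^\ast X\subset\tilde P$, and $X$ is a left $P$-module and a right $\tilde P$-module (each relation follows by multiplying $Xs^\fw=C\cap(Ms^\fw)$ on the appropriate side inside the twisted crossed product and bookkeeping the $2$-cocycle, exactly the twisted versions of the identities implicit in Lemma~\ref{Lem:unitaryv}). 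Since $N\subset M$ is irreducible, $P=C\cap M$ is a subfactor, and as a von Neumann subalgebra of the finite factor $M$ it is itself finite; let $\tau$ be the unique, hence $\alpha_s$-invariant, trace on $M$. Next I would pin down the left support: because $\alpha_s(N)=N\subset P$ and $N\subset\tilde P$, the space $X$ is an $N$--$N$ bimodule, so $XX^\ast$ is $\ad(N)$-invariant and its support projection $p_0\in P$ lies in $N'\cap M=\IC$; as $X\neq\0$ this gives $p_0=1$, i.e.\ the left support of $X$ is full.

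The heart of the proof is then a maximality argument. By Zorn's lemma choose a maximal family $(v_i)_i$ of partial isometries in $X$ with $v_iv_i^\ast\in P$ and pairwise orthogonal ranges $v_iv_i^\ast$. Orthogonality of the ranges forces $v_i^\ast v_j=0$ for $i\neq j$, so the strongly convergent sum $v:=\sum_i v_i$ lies in the weak-$\ast$ closed space $X$ and satisfies $vv^\ast v=v$; thus $v$ is a partial isometry with $vv^\ast=\sum_i v_iv_i^\ast$. I claim $vv^\ast=1$: otherwise $e:=1-vv^\ast\in P$ is a nonzero projection, and fullness of the left support yields $y\in X$ with $ey\neq0$; polar-decomposing $ey=u|ey|$, where $|ey|\in\tilde P$ since $X^\ast X\subset\tilde P$ and $u=ey|ey|^{-}\in X$ by the right $\tilde P$-module structure, produces a partial isometry $u\in X$ with $uu^\ast\leq e$ orthogonal to every $v_iv_i^\ast$, contradicting maximality. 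Finally $vv^\ast=1$ together with finiteness of $M$ gives $\tau(v^\ast v)=\tau(vv^\ast)=1$, so the projection $v^\ast v=1$ and $v\in X\cap M^{\rm u}$, which proves $\E_s(C)\cap M^{\rm u}\neq\emptyset$.

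I expect the main obstacle to be the careful verification of the module and support relations in the twisted setting described in the second paragraph, in particular confirming that $X^\ast X$ lands in $\tilde P=\alpha_s(P)$ rather than in $P$ and that $X$ is genuinely a right $\tilde P$-module, since this is precisely what legitimizes the polar-decomposition extraction of $u$ with $|ey|^{-}\in\tilde P$. Once these are in place, the finiteness of $M$ makes the endgame short: one only needs a maximal orthogonal-range family rather than the separate control of the right support required in the infinite-type case.
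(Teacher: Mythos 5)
Your structural set-up is correct and matches the paper's: $X:=\E_s(C)$ is weak-$\ast$ closed, $XX^\ast\subset P:=C\cap M$, $X^\ast X\subset\tilde P:=\alpha_s(P)$, $X$ is a left $P$-, right $\tilde P$-module and an $N$-$N$-bimodule, and irreducibility makes the left support of $X$ full; the paper's proof also has the same overall shape (Zorn's lemma, a cut-down made nonzero by irreducibility, polar decomposition via the module structure, finiteness at the end). But the heart of your maximality argument has a genuine gap: a family of partial isometries with pairwise orthogonal \emph{ranges} only need not sum to a partial isometry. Range orthogonality does give $v_i^\ast v_j=0$ for $i\neq j$, hence formally $v^\ast v=\sum_i v_i^\ast v_i$; but the sources $v_i^\ast v_i$ may overlap, so this sum need not be a projection and $vv^\ast v\neq v$ in general. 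Already for $v_1=e_{11}$, $v_2=e_{21}$ in a copy of $\IM_2(\IC)\subset M$ (orthogonal ranges $e_{11},e_{22}$, equal sources) one gets $(v_1+v_2)^\ast(v_1+v_2)=2e_{11}$. Worse, the sum need not converge at all: range orthogonality gives $\|\sum_{i\in F}v_i\|^2=\|\sum_{i\in F}v_i^\ast v_i\|$, and in a II$_1$ factor one can take orthogonal ranges $p_n$ of trace $2^{-n-1}$ and sources $q_n=\chi_{[0,2^{-n-1}]}$ inside a copy of $L^\infty[0,1]$, making the partial sums unbounded. So the maximal family your Zorn argument produces (which does yield $\sum_i v_iv_i^\ast=1$) cannot be assembled into the co-isometry your endgame needs, and the final step ``$\tau(v^\ast v)=\tau(vv^\ast)=1$, hence $v^\ast v=1$'' has nothing to apply to.

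The missing ingredient is exactly the ``separate control of the right support'' that your last sentence claims finiteness makes unnecessary; it does not. To repair the argument you must keep the sources pairwise orthogonal as well, and then at the maximality step, writing $e:=1-\sum_i v_iv_i^\ast$ and $f:=1-\sum_i v_i^\ast v_i$, you need a nonzero partial isometry $u\in X$ with $uu^\ast\le e$ \emph{and} $u^\ast u\le f$. Finiteness of $M$ enters precisely here (not only at the end): it guarantees that $e\neq 0$ forces $f\neq 0$. Producing such a $u$ requires showing $eXf\neq\0$, i.e.\ a two-sided cut-down, which needs \emph{two} applications of irreducibility of $N\subset M$, one on each side with unitaries $u_1,u_2\in N^{\rm u}$ — this is exactly what the paper does (phrased with a single maximal partial isometry under the order $v\preceq w$ iff $v=vv^\ast w$ rather than a maximal family, a cosmetic difference), followed by polar decomposition of $a=(1-p)u_1xu_2(1-q)$. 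Alternatively one could salvage your one-sided family after the fact by replacing each $v_i$ with $v_iw_i$, where $w_i\in\tilde P$ are partial isometries chosen by a trace argument in the finite factor $\tilde P$ so that the new sources become pairwise orthogonal; but some source-side control is unavoidable, and without it the proof does not close.
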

\begin{proof}
Let $s\in \Gamma$ be an element with $\E_s(C) \neq \0$.
We define $\cV$ to be the set of all partial isometry elements contained in $\E_s(C)$.
Note that $0\in \cV$.
On $\cV$, we equip with the partial order $\preceq$ by declaring $v \preceq w$
when $v=vv^\ast w$.

Since $\E_s(C)s^\fw=(Ms^\fw) \cap C$ by Lemma \ref{Lem:cfv}, $\E_s(C)$ is weak-$\ast$ closed in $M$.
Hence, by Zorn's lemma,
one has a maximal element $v$ in $(\cV, \preceq)$.
We will show that $v \in M^{\rm u}$.

Suppose that $v\not\in M^{\rm u}$.
Since $M$ is finite, neither of the projections
\[p:=vv^\ast, \quad q:=v^\ast v\]
are equal to $1_M$.
Pick a nonzero element $x\in \E_s(C)$.
As the inclusion $N\subset M$ is irreducible, one has $u_1\in N^{\rm u}$ with
$(1-p)u_1 x \neq 0$.
(Indeed, with $r:=\chi_{(0, \infty)}(xx^\ast)$ (the range projection of $x$), one has
\[0\neq \bigvee_{ u\in N^{\rm u}}uru^\ast\in N'\cap M=\IC 1_M.\]
Hence one has $u_1\in N^{\rm u}$ with $(1-p)u_1 xx^\ast u_1^\ast \neq 0$.
This gives the desired $u_1$.)
By applying the same argument once more, we get $u_2 \in N^{\rm u}$ with
$(1-p)u_1 x u_2 (1-q) \neq 0$. Since $N\subset C$, we have
$u_1 x u_2 \in \E_s(C)$.

Note that as $vs^\fw \in C$, we have $p, \alpha_s^{-1}(q) \in C \cap M$.
Hence
\[0 \neq a:=(1-p)u_1 x u_2 (1-q) \in \E_s(C).\]
Now let $a=|a^\ast| w$ be the right polar decomposition of $a$.
Since $|a^\ast| \in C \cap M$ and $\E_s(C)$ is a weak-$\ast$ closed left $(C\cap M)$-submodule of $M$, one has $w \in \cV$.
Clearly $v^\ast w=w^\ast v=0$, which shows that $v+w \in \cV$ and $v \prec v+ w$.
This contradicts to the maximality of $v$. Thus $v\in M^{\rm u}$.
\end{proof}

The next lemma is the replacement of Lemmas \ref{Lem:AP} and \ref{Lem:AP2} used in the \Cs-algebra case.
For von Neumann algebras, thanks to the von Neumann bicommutant theorem, we do not need the AP of the acting group.
The lemma also follows from the stabilization trick \cite{Sut} and Corollary 3.4 of \cite{SuzCMP}.
But here we give a direct proof, because the used reference \cite{Tak1}, Proposition V.7.14 contains non-rigorous treatments of the convergence of series expansions in crossed products.

For the definition of respecting maps for cocycle actions,
see the beginning of Section \ref{section:general}.
\begin{Lem}\label{Lem:coeffv}
Let $(\alpha, \fw)\colon \Gamma \acts M$ be a cocycle action on a von Neumann algebra.
Let $N\subset M$ be a von Neumann subalgebra, let $\Lambda<\Gamma$,
and
let $\fu\colon \Lambda \rightarrow M^{\rm u}$ be an $(N, \Lambda)$-respecting map.
Then $x\in M \vnc{\alpha, \fw} \Gamma$ is contained in
$N \vnc{{}^\fu \alpha, {}^\fu \fw} \Lambda$ if and only if it satisfies
$\E_s(x)\in N \fu_s$ for all $s\in \Lambda$ and $\E_s(x)=0$ for all $s\in \Gamma \setminus \Lambda$.
\end{Lem}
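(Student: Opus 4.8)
The plan is to prove the two implications separately. The forward (``only if'') direction is soft. The algebraic twisted crossed product of $({}^\fu\alpha,{}^\fu\fw)$ sits in $M\vnc{\alpha,\fw}\Gamma$ as the finite sums $\sum_{s\in\Lambda}c_s\fv_s$ with $c_s\in N$ and $\fv_s=\fu_s s^\fw$, and on such an element one computes $\E_t(\sum_{s\in\Lambda}c_s\fv_s)=c_t\fu_t\in N\fu_t$ for $t\in\Lambda$ and $=0$ for $t\notin\Lambda$. Since each $\E_s$ is normal and the sets $N\fu_s$ and $\{0\}$ are $\sigma$-weakly closed, Kaplansky density propagates both conditions to the $\sigma$-weak closure $N\vnc{{}^\fu\alpha,{}^\fu\fw}\Lambda$. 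So I would only dwell on the converse.

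For the converse I would first dispose of the support condition. I claim there is a canonical normal conditional expectation $E^\Gamma_\Lambda\colon M\vnc{\alpha,\fw}\Gamma\to M\vnc{\alpha,\fw}\Lambda$ satisfying $\E_s\circ E^\Gamma_\Lambda=\E_s$ for $s\in\Lambda$ and $\E_s\circ E^\Gamma_\Lambda=0$ for $s\notin\Lambda$. With no hypothesis on $\Gamma$, this is obtained by composing the dual coaction $\delta\colon \cM\to\cM\btimes L(\Gamma)$ (normal, injective, with $\delta(s^\fw)=s^\fw\otimes\lambda_s$) with the canonical trace-preserving normal expectation $L(\Gamma)\to L(\Lambda)$ and inverting $\delta$ on the range; in particular it needs neither the AP nor amenability. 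Given $x$ with the stated coefficients, the element $y:=E^\Gamma_\Lambda(x)\in M\vnc{\alpha,\fw}\Lambda$ has $\E_s(y)=\E_s(x)$ for every $s\in\Gamma$, whence $\E_s(x-y)=0$ for all $s$. Faithfulness of $\E$, in the form of the Parseval identity $\|z\|_\psi^2=\sum_s\|\E_s(z)\|_\vp^2$ in the GNS space of $\psi=\vp\circ\E$ (which uses only orthogonality of the group-graded pieces, and never sums a Fourier series of $x$), then forces $x=y$. Thus $x\in M\vnc{\alpha,\fw}\Lambda$, and it remains to upgrade the coefficients from $M$ to $N$.

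For the coefficient step, I would absorb $\fu$ into the cocycle action via the relabeling isomorphism of Section \ref{section:general} and untwist $\fw$ by the von Neumann version of the stabilization trick (Lemma \ref{Lem:stab}), both compatibly with the maps $\E_s$ and with the two conditions. This reduces the problem to the following: for an ordinary action $\beta\colon\Lambda\acts M$ with $\beta_s(N)=N$, every $x\in M\vnc{\beta}\Lambda$ with $\E_s(x)\in N$ for all $s$ lies in $N\vnc{\beta}\Lambda$. Here I invoke the bicommutant theorem on $\cH=\ell^2(\Lambda,H)$ for $M\subseteq\IB(H)$, with $\pi(a)\xi(r)=\beta_{r^{-1}}(a)\xi(r)$ and $\lambda_s\xi(r)=\xi(s^{-1}r)$, so that the matrix of $x$ is $x_{r,t}=\beta_{r^{-1}}(\E_{rt^{-1}}(x))$. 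A direct check shows that commuting with $\pi(N)$ and all $\lambda_s$ forces an operator $T$ to have the form $T_{r,t}=S_{r^{-1}t}$ with $S_v d=\beta_v(d)S_v$ for all $d\in N$, and that all such $T$ lie in the commutant; so these describe $(N\vnc{\beta}\Lambda)'$ exactly. Granting this, I verify $xT=Tx$ coefficientwise: each term of $(xT)_{r,t}$ and of $(Tx)_{r,t}$ agrees because $\E_p(x)\in N$ allows me to apply the intertwining relation with $d=\beta_{t^{-1}p^{-1}}(\E_p(x))\in N$ to move $S_{r^{-1}pt}$ across the coefficient, using $\beta_{r^{-1}pt}\,\beta_{t^{-1}p^{-1}}=\beta_{r^{-1}}$ (which is clean precisely because $\beta$ is now a genuine homomorphism). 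Hence $x\in(N\vnc{\beta}\Lambda)''=N\vnc{\beta}\Lambda$.

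The main obstacle is this coefficient step: a convergence-free identification of $(N\vnc{\beta}\Lambda)'$ together with the verification that $x$ commutes with it. Concretely one must (a) derive the matrix form $T_{r,t}=S_{r^{-1}t}$, $S_v\in\{S:Sd=\beta_v(d)S\ \forall d\in N\}$, purely from $T$ commuting with the generators, and check that these operators both lie in and exhaust the commutant, and (b) justify the coefficientwise manipulation of the a priori infinite matrix products $\sum_u x_{r,u}T_{u,t}$ by pairing with finitely supported vectors, so that no Fourier expansion of $x$ is ever summed --- this is exactly where the cited treatment in \cite{Tak1} is non-rigorous, and it is the reason for giving a direct proof. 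The rest (the ``only if'' direction, the conditional expectation in the support step, and the stabilization/relabeling bookkeeping that removes $\fu$ and $\fw$) is routine and I would relegate it to standard references.
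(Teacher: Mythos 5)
Your proposal is correct, but it takes a genuinely different route from the paper's proof. The paper gives a one-stroke bicommutant argument directly on the twisted crossed product: for an arbitrary $T\in (N\vnc{{}^\fu \alpha, {}^\fu \fw}\Lambda)'$ it forms the truncated Fourier sums $x_{\mathfrak{F}}=\sum_{s\in \mathfrak{F}}\E_s(x)s^\fw$ over finite $\mathfrak{F}\subset \Lambda$ --- these lie in $N\rtimes_{{\rm alg}, {}^\fu\alpha, {}^\fu\fw}\Lambda$ precisely because of the two coefficient hypotheses --- observes that $x_{\mathfrak{F}}$ and $x_{\mathfrak{F}}^\ast$ converge to $x$ and $x^\ast$ pointwise on the vectors $\xi\otimes\delta_s$ (not strongly; the net is unbounded), and deduces $Tx=xT$ by passing to the limit inside inner products. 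So the paper never computes a commutant, never needs an expectation onto $M\vnc{\alpha, \fw}\Lambda$, and needs no reduction to remove $\fu$ or $\fw$. Your argument instead proceeds by three reductions --- the dual-coaction expectation for the support condition, relabeling plus the stabilization trick \cite{Sut} to untwist, and then an explicit identification of $(N\vnc{\beta}\Lambda)'$ with an entrywise verification of commutation --- which is in effect a self-contained version of the alternative route the paper itself mentions (the stabilization trick together with Corollary 3.4 of \cite{SuzCMP}), with your commutant computation standing in for that citation. Both proofs correctly avoid the non-rigorous step in \cite{Tak1} (summing the Fourier series of $x$ in the strong topology): yours via weakly, absolutely convergent pairings of matrix products together with a bijective reindexing of terms, the paper's via its pointwise-convergent net. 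Two small repairs to your sketch: the Parseval identity should read $\|z\|_\psi^2=\sum_{s}\|\E_s(z)s^\fw\|_\psi^2$ --- for non-invariant $\vp$ the summand is not $\|\E_s(z)\|_\vp^2$ --- though the injectivity you actually use (all coefficients zero forces $z=0$) does follow from orthogonality of the graded pieces and faithfulness of $\psi$; and in the stabilization step the untwisting unitaries must be taken in $N\btimes\IB$ (possible because the relabeled $2$-cocycle is $N^{\rm u}$-valued on $\Lambda\times\Lambda$), after which one descends from $x\otimes 1$ back to $x$ by a relative commutant argument. The trade-off: your route is modular and reduces everything to an untwisted statement at the cost of extra machinery; the paper's is shorter, self-contained, and uniform in the twist.
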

\begin{proof}
Clearly the condition is necessary.

In the rest of the proof, we identify $M\vnc{\alpha, \fw} \Gamma$ with the von Neumann algebra on a Hilbert space $\fH\otimes \ell^2(\Gamma)$
via a normal faithful regular representation.

To show the converse, take any $T\in (N \vnc{{}^\fu \alpha, {}^\fu \fw} \Lambda)'$ and $x \in M\vnc{\alpha, \fw} \Gamma$ satisfying 
$\E_s(x)\in N \fu_s$ for all $s\in \Lambda$ and $\E_s(x)=0$ for all $s\in \Gamma \setminus \Lambda$.
By the von Neumann bicommutant theorem, it suffices to show that $T$ and $x$ commute.
To prove the commutativity, we choose a net $(x_i)_{i\in I}$ in $N \rtimes_{{\rm alg}, {}^\fu \alpha, {}^\fu \fw} \Lambda$
satisfying
\[\lim_{i\in I} x_i( \xi\otimes \delta_s)=x(\xi \otimes \delta_s), \quad \lim_{i\in I} x_i^\ast(\xi \otimes \delta_s)=x^\ast(\xi \otimes \delta_s)\]
for all $\xi \in \fH$ and $s\in \Gamma$.
(For instance, define $I$ to be the set of all finite subsets of $\Lambda$ equipped with the set inclusion order.
Set
\[x_\mathfrak{F}:= \sum_{s\in \mathfrak{F}} \E_s(x) s^\fw \in N \rtimes_{{\rm alg}, {}^\fu \alpha, {}^\fu \fw} \Lambda \quad {\rm~for~}\mathfrak{F}\in I.\]
Then the net $(x_\mathfrak{F})_{\mathfrak{F}\in I}$ satisfies the desired conditions.
We remark that the net may not converge to $x$ in the strong operator topology, as it would be unbounded.)
Since each $x_i$ commutes with $T$, one has
\begin{align*}\ip{Tx(\xi \otimes \delta_s), \eta\otimes \delta_t}&=\lim_{i\in I} \ip{Tx_i (\xi\otimes\delta_s), \eta\otimes \delta_t}\\
&=\lim_{i\in I} \ip{T(\xi\otimes\delta_s), x_i^\ast(\eta\otimes \delta_t)}\\
&=\ip{T(\xi \otimes \delta_s), x^\ast(\eta\otimes \delta_t)}\\
&=\ip{xT(\xi \otimes \delta_s), \eta\otimes \delta_t} \quad\quad {\rm~for~all~} \xi, \eta\in \fH, s, t\in \Gamma. 
\end{align*}
This proves $xT=Tx$.
\end{proof}

By Lemmas \ref{Lem:unitaryv} and \ref{Lem:type}, when either $M$ is of finite type or $N$ is of infinite type,
without adding a new tensor component, we obtain the splitting result (Theorem \ref{Thmint:vn}).
\begin{proof}[Proof of Theorem \ref{Thmint:vn}]
With Lemmas \ref{Lem:cfv} to \ref{Lem:coeffv} in hand,
now the proof is essentially the same as the unital \Cs-algebra case.
In the second case (the case $N$ is infinite), one can further eliminate the $2$-cocycle,
by the stabilization trick \cite{Sut}.
\end{proof}
As a special case of Theorem \ref{Thmint:vn}, (2), we obtain the following von Neumann algebra version of Theorem \ref{Thmint:Cs}.
\begin{Cor}\label{Cor:vnsp}
Let $N \subset (M, \alpha, \fw)$ be an irreducible, centrally $\Gamma$-free $\Gamma$-subfactor.
Then for any $C\in \Int{N}{M \vnc{\alpha, \fw}\Gamma}$,
there is $\Lambda<\Gamma$ and a $(\IB \btimes (C\cap M), \Lambda)$-respecting map $\fu \colon \Lambda \rightarrow (\IB \btimes M)^{\rm u}$
satisfying ${}^\fu \fw=1$ and
\[ \IB\btimes C = (\IB\btimes (C \cap M))\vnc{{}^\fu \alpha}\Lambda.\]
\end{Cor}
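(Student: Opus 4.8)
The plan is to deduce the corollary from Theorem \ref{Thmint:vn}(2) by applying it, not to $N\subset (M,\alpha,\fw)$ itself, but to the amplified inclusion
\[
\IB\btimes N \subset (\IB\btimes M,\ \id_{\IB}\otimes\alpha,\ 1\otimes\fw).
\]
The point of tensoring with the type $\mathrm{I}_\infty$ factor $\IB$ is that the base $\IB\btimes N$ is then automatically of infinite type, so the infinite-type alternative of the theorem applies; this is exactly the case in which the stabilization trick \cite{Sut} untwists the $2$-cocycle, producing the clean genuine crossed product in the conclusion. First I would record the canonical identification
\[
(\IB\btimes M)\vnc{\id_{\IB}\otimes\alpha,\,1\otimes\fw}\Gamma \;\cong\; \IB\btimes\bigl(M\vnc{\alpha,\fw}\Gamma\bigr),
\]
under which $\IB\btimes C$ becomes a member of $\Int{\IB\btimes N}{(\IB\btimes M)\vnc{\id_{\IB}\otimes\alpha,\,1\otimes\fw}\Gamma}$.

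Next I would check that the four hypotheses of Theorem \ref{Thmint:vn} survive the operation $\IB\btimes-$. Factoriality and infinite type of $\IB\btimes N$ are immediate, since a tensor product of factors is a factor and tensoring with $\IB$ makes any von Neumann algebra properly infinite. Irreducibility follows from the commutation theorem: the relative commutant of $\IB\btimes N$ in $\IB\btimes M$ equals $1\otimes(N'\cap M)=\IC$. The one genuinely substantive point, which I expect to be the main obstacle, is that central $\Gamma$-freeness is inherited, as this requires controlling the \emph{relative central sequence algebra} of the amplification. Here I would use the embedding $N^\omega\cap M_\omega \hookrightarrow (\IB\btimes N)^\omega\cap(\IB\btimes M)_\omega$ given by $x\mapsto 1\otimes x$ (on representing sequences $(x_m)\mapsto(1\otimes x_m)$), which is well defined because a sequence $(1\otimes x_m)$ commutes asymptotically with $b\otimes 1$ trivially and with $1\otimes a$ exactly when $(x_m)$ does. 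Then, for $s\in\Gamma\setminus\{e\}$, taking a partition of unity $(p_n)$ in $(N^\omega\cap M_\omega)^{\rm p}$ with $p_n\alpha_s(p_n)=0$ (Lemma \ref{Lem:reind}), the projections $(1\otimes p_n)$ form a partition of unity in $((\IB\btimes N)^\omega\cap(\IB\btimes M)_\omega)^{\rm p}$ satisfying $(1\otimes p_n)(\id_{\IB}\otimes\alpha_s)(1\otimes p_n)=1\otimes\bigl(p_n\alpha_s(p_n)\bigr)=0$, so $\id_{\IB}\otimes\alpha_s$ is centrally free for $\IB\btimes N\subset\IB\btimes M$ in the sense of Definition \ref{Def:cfvn}.

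With the hypotheses verified, applying Theorem \ref{Thmint:vn}(2) to the intermediate algebra $\IB\btimes C$ yields a subgroup $\Lambda<\Gamma$ and a respecting map $\fu\colon\Lambda\to(\IB\btimes M)^{\rm u}$ with ${}^\fu(1\otimes\fw)=1$ and
\[
\IB\btimes C \;=\; \bigl((\IB\btimes C)\cap(\IB\btimes M)\bigr)\vnc{{}^\fu\alpha}\Lambda .
\]
The final step is the intersection identity $(\IB\btimes C)\cap(\IB\btimes M)=\IB\btimes(C\cap M)$, which again reduces to a commutant computation: since $(\IB\btimes P)'=1\otimes P'$ for every von Neumann subalgebra $P$, one has
\[
(\IB\btimes C)\cap(\IB\btimes M)=\bigl((1\otimes C')\vee(1\otimes M')\bigr)'=\bigl(1\otimes(C'\vee M')\bigr)'=\IB\btimes(C\cap M).
\]
Substituting this into the displayed splitting gives precisely the asserted equality $\IB\btimes C=(\IB\btimes(C\cap M))\vnc{{}^\fu\alpha}\Lambda$, with $\fu$ a $(\IB\btimes(C\cap M),\Lambda)$-respecting map satisfying ${}^\fu\fw=1$. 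Apart from the central-freeness transfer flagged above, every step is a routine commutant or crossed-product identification, and the whole argument mirrors the way $\mathcal{O}_2$ is used in the \Cs-algebra case, with $\IB$ playing the analogous role on the von Neumann algebra side.
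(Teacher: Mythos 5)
Your proposal is correct and is exactly the argument the paper intends: the paper derives Corollary \ref{Cor:vnsp} by applying Theorem \ref{Thmint:vn}(2) (in its cocycle form) to the amplified inclusion $\IB\btimes N\subset(\IB\btimes M,\id_\IB\otimes\alpha,1\otimes\fw)$, whose base is automatically of infinite type, which is precisely your route. Your supporting verifications -- the transfer of central $\Gamma$-freeness via $p_n\mapsto 1\otimes p_n$ into $(\IB\btimes N)^\omega\cap(\IB\btimes M)_\omega$ (which the paper leaves implicit, and which one can make fully rigorous using the predual-norm characterization of $\omega$-central sequences and the faithful normal ultrapower state to see that $\sum_n 1\otimes p_n=1$), and the commutant computations for irreducibility and for $(\IB\btimes C)\cap(\IB\btimes M)=\IB\btimes(C\cap M)$ -- are all sound.
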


\begin{Rem}\label{Rem:countexv}
Similar to the \Cs-algebra case (see Section \ref{section:O2}), in general the statement of Theorem \ref{Thmint:vn} fails.
To see this, consider the state $\varphi:=\tau \circ \E^{\IT}$ on $\mathcal{O}_2$.
Here $\tau$ denotes the unique tracial state on $\IM_{2^\infty}$.
Let $N \subset M$ be the weak-$\ast$ closures of the \Cs-algebras $\IM_{2^\infty} \subset \mathcal{O}_2$ in the GNS representation of $\varphi$.
Then $N$ is a type II$_1$ factor, $M$ is a type III factor (\cite{OP}), and the inclusion is irreducible.
As $\varphi$ is gauge invariant, the gauge action extends to the action $\IT \acts M$ with $M^\IT=N$.
Let $\alpha \colon \IZ \acts M$ be any action commuting with the gauge action.
(For instance, any quasi-free action on $\mathcal{O}_2$ extends to such an action.)
Fix $\chi \in \widehat{\IT} \setminus\{1\}$.
We set
\[C:=\{c\in M \vnc{\alpha} \IZ: \E_n(c) \in M_{\chi^n} {\rm ~for~}n\in \IZ\}\in \Int{N}{M\vnc{\alpha}\IZ}.\]
Then one can show that $\E_n(C)\cap M^{\rm u}=\emptyset$ for $n\neq 0$
and hence $C$ does not admit a natural crossed product splitting.
The proof is the same as the \Cs-algebra case discussed in Section \ref{section:O2}, hence we omit it.
\end{Rem}

\section{Galois's type result for compact-by-discrete groups}\label{section:Galois}
Throughout this section, we let $K$ be a compact second countable group, $\Gamma$ be a countable discrete group, and $A$ be a separable \Cs-algebra.
To avoid confusion, in this section, we write $g, h$ for elements in $K$, and write $s, t$ for elements in $\Gamma$.

By applying our splitting theorem, we establish a Galois's type theorem for the Bisch--Haagerup type inclusions \cite{BH} $A^K\subset A \rca{\alpha} \Gamma$
arising from semidirect product group actions $\alpha\colon K\rtimes_\sigma \Gamma \acts A$. 
The precise statement is as follows. We first introduce some notations.

We denote by $ {\rm D}(K)$ the closure of the commutator subgroup of $K$.
We let $K^{\rm ab}$ denote the Hausdorff abelianization of $K$,
namely
\[K^{\rm ab}:=K/ {\rm D}(K).\]
To avoid heavy notations, for $\alpha \colon K \acts A$,
we write the induced action $K^{\rm ab} \acts A^{{\rm D}(K)}$ by the same symbol $\alpha$.
Clearly one has $(A^{\rm D(K)})^{K^{\rm ab}}=A^K$.

Consider an action $\alpha \colon K \acts A$.
For $L <K$ and $\chi \in \widehat{L^{\rm ab}}$, we define
\[A_\chi:=\{a\in A: \alpha_g(a)=\chi_g \cdot a \quad{\rm~for~}g\in L\}.\]
Here and below we often identify $\chi\in \widehat{L^{\rm ab}}$
with a homomorphism $L \rightarrow \IT$ via the quotient map.
Note that $A_\chi \subset A^{\ker(\chi)} \subset A^{{\rm D}(L)}$.
Although the induced action $K \acts \cM(A)$ is not necessary continuous in norm,
we also consider the subspaces $\cM(A)_\chi \subset \cM(A)$ for $\chi\in  \widehat{L^{\rm ab}}$.
By the same way as the abelian case (Section \ref{subsection:compact}), for any $\chi \in \widehat{L^{\rm ab}}$,
we have a canonical projection $\E^L_{\chi} \colon A \rightarrow A_\chi$ given by
\[\E^L_\chi(a):=\int_L \alpha_g(a)\overline{\chi_{g}} {\rm d}m_L(g).\]

Associated to each action $\sigma \colon \Gamma \rightarrow {\rm Aut}(K)$, we define the (complete) lattice $\cL(K, \Gamma, \sigma)$ as follows.
First, as a set, it is defined to be
\[ \cL(K, \Gamma, \sigma):=\left\{ (L, \Lambda, \omega): \begin{array}{l}
 L<K,\quad \Lambda < \Gamma,\\ L {\rm~is~}\sigma(\Lambda){\rm \text-invariant}, \\
 \omega\in {{\rm Z}^1}(\Lambda, \widehat{L^{\rm ab}}; \sigma^\ast)
 \end{array}
 \right\}.\]
 Here ${\rm Z}^1(\Lambda, \widehat{L^{\rm ab}}; \sigma^\ast)$ denotes the group of all $1$-cocycles of the induced action
 $\sigma^\ast \colon \Lambda \acts \widehat{L^{\rm ab}}$; $\sigma^\ast_s(\chi):=\chi\circ\sigma_s^{-1}$.
 Namely,
\[{\rm Z}^1(\Lambda, \widehat{L^{\rm ab}}; \sigma^\ast):=\left\{\omega \colon \Lambda \rightarrow \widehat{L^{\rm ab}}: \omega(s)\sigma^\ast_s(\omega(t))=\omega(st) \quad {\rm~for~}s, t\in \Lambda\right\}.\]
 On $\cL(K, \Gamma, \sigma)$, we define the partial order $\preceq$ by declaring
$(L_1, \Lambda_1, \omega_1) \preceq (L_2, \Lambda_2, \omega_2)$
when $L_1 > L_2$, $\Lambda_1 < \Lambda_2$, and
the diagram
\[
 \begin{CD}
 \Lambda_1 @>{\omega_1}>> \widehat{L_1^{\rm ab}}\\
 @VVV @VVV \\
 \Lambda_2 @>{\omega_2}>> \widehat{L_2^{\rm ab}}
 \end{CD}\]
 commutes.
Here the left and right vertical maps are the inclusion map and the pullback of
the induced homomorphism $L_2^{\rm ab} \rightarrow L_1^{\rm ab}$ respectively.
It is easy to check that $(\cL(K, \Gamma, \sigma), \preceq)$ is indeed a complete lattice.

We now state our Galois's type theorem.
For $(L, \Lambda, \omega)\in \cL(K, \Gamma, \sigma)$,
we define
\[\Cso(L, \Lambda, \omega):=\overline{\rm span}\{A_{\omega(s)}s: s\in \Lambda\} \in \Int{A^K}{A\rca{\alpha}\Gamma}.\]
By the $1$-cocycle equation of $\omega$, the linear span forms a $\ast$-subalgebra.
Hence indeed we have $\Cso(L, \Lambda, \omega) \in \Int{A^K}{A\rca{\alpha}\Gamma}$.

By combining the Galois correspondence theorem for isometrically shift-absorbing actions \cite{GS2} of compact groups
recently shown by Mukohara \cite{Mu} (see also \cite{Izu} for the finite group case) and our crossed product splitting theorem, we now establish the following Galois's type correspondence result.
\begin{Thm}[Theorem \ref{Thmint:Galois}]\label{Thm:Galois}
Let $\alpha \colon K \rtimes_\sigma \Gamma \acts A$ be an action on a simple separable \Cs-algebra.
Assume that $\id_{\mathcal{O}_2} \otimes \alpha$ is isometrically shift-absorbing.
Then the map
\[(L, \Lambda, \omega) \mapsto \Cso(L, \Lambda, \omega)\]
gives a lattice isomorphism
\[\cL(K, \Gamma, \sigma) \cong \Int{A^K}{A\rca{\alpha} \Gamma}.\]
\end{Thm}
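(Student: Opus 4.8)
The plan is to separate the two ``directions'' of the inclusion: handle the compact direction $A^K\subset A$ by Mukohara's Galois correspondence \cite{Mu}, handle the discrete direction by the coefficient maps $\E_s$ and the splitting lemmas of Section~\ref{section:proof}, and glue the two through the Cuntz-algebra trick. First I would dispose of the easy half. That each $\Cso(L,\Lambda,\omega)$ lies in $\Int{A^K}{A\rca{\alpha}\Gamma}$ is already recorded, and the triple is recovered from the subalgebra: the $e$-coefficient gives $\Cso(L,\Lambda,\omega)\cap A=A_{\omega(e)}=A^{L}$, so $L$ comes back through Mukohara's correspondence for $A^K\subset A$; the support $\{s:\E_s\neq\{0\}\}$ returns $\Lambda$; and $\omega(s)$ is the unique character with $\E_s(\Cso(L,\Lambda,\omega))=A_{\omega(s)}$. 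Thus the map is well defined and injective, and I would check that $\preceq$ matches set inclusion by the inclusion criterion for twisted crossed-product subalgebras from Section~\ref{section:general}. The content left is surjectivity together with order-reflection, so I fix $C\in\Int{A^K}{A\rca{\alpha}\Gamma}$ and reconstruct its datum.

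The engine is Theorem~\ref{Thmint:Cs} applied to $A^K\subset(A,\alpha|_{\Gamma})$. Its hypotheses hold: $A^K\subset A$ is \Cs-irreducible because by Mukohara every intermediate algebra is some $A^{L}$, which is simple; central $\Gamma$-freeness of $\mathcal{O}_2\otimes A^K\subset\mathcal{O}_2\otimes A$ follows from isometric shift-absorption of $\id_{\mathcal{O}_2}\otimes\alpha$ (this is the weaker condition noted after the proof of Theorem~\ref{Thmint:Cs}), and then $A^K\subset A\rca{\alpha}\Gamma$ is \Cs-irreducible by Corollary~\ref{Cor:Cirr}. Consequently there are $\Lambda<\Gamma$ and a $(\mathcal{O}_2\otimes(C\cap A),\Lambda)$-respecting map $\fu\colon\Lambda\to(\mathcal{O}_2\otimes A)^{\rm u}$ with $\mathcal{O}_2\otimes C=(\mathcal{O}_2\otimes(C\cap A))\rca{{}^{\fu}\alpha,{\rm d}\fu}\Lambda$, and $C\cap A=A^{L}$ for a unique closed $L<K$ by Mukohara. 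Here I would also note that by Lemma~\ref{Lem:cf} each $M_s:=\E_s(C)s\subset C$ is a Hilbert $A^{L}$-bimodule with $\overline{M_sM_s^{\ast}}=\overline{M_s^{\ast}M_s}=A^{L}$ (fullness from simplicity of $A^{L}$) and $M_sM_t\subset M_{st}$, so $\{M_s\}_{s\in\Lambda}$ is a bundle of \emph{invertible} $A^{L}$-bimodules.

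The main obstacle is the \emph{homogeneity} step: showing $\fu_s$ can be normalized, modulo $\cM(\mathcal{O}_2\otimes A^{L})^{\rm u}$, into a single eigenspace $(\mathcal{O}_2\otimes A)_{\omega(s)}$ for one character $\omega(s)\in\widehat{L^{\rm ab}}$. This is where Mukohara's correspondence for $\mathcal{O}_2\otimes A^{K}\subset\mathcal{O}_2\otimes A$ re-enters: an invertible self-Morita $A^{L}$-sub-bimodule of $A$ arising from the compact action must be a full character eigenspace $A_\chi$, the eigenspaces being exactly the invertible pieces of the grading of $A^{{\rm D}(L)}$ by $\widehat{L^{\rm ab}}$. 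Once $\fu_s\in(\mathcal{O}_2\otimes A)_{\omega(s)}$ is established, the remaining structure falls out of the semidirect-product relation $u_s\alpha_g u_s^{\ast}=\alpha_{\sigma_s(g)}$: the respecting identity $\fu_s\alpha_s(\mathcal{O}_2\otimes A^{L})\fu_s^{\ast}=\mathcal{O}_2\otimes A^{L}$ forces $\sigma_s(L)=L$ for $s\in\Lambda$ (so $L$ is $\sigma(\Lambda)$-invariant), while $\alpha_s(\fu_t)\in(\mathcal{O}_2\otimes A)_{\sigma_s^{\ast}(\omega(t))}$ turns the condition ${\rm d}\fu_{s,t}\in\cM(\mathcal{O}_2\otimes A^{L})^{\rm u}=\cM(\mathcal{O}_2\otimes A_1)^{\rm u}$ into the twisted $1$-cocycle equation $\omega(st)=\omega(s)\,\sigma_s^{\ast}(\omega(t))$, i.e.\ $\omega\in{\rm Z}^1(\Lambda,\widehat{L^{\rm ab}};\sigma^{\ast})$. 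I expect verifying this homogeneity (and the fullness/saturation making $A_{\omega(s)}$ itself the whole coefficient space) to be the hardest technical point.

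With $(L,\Lambda,\omega)\in\cL(K,\Gamma,\sigma)$ produced, I would finish by descending from $\mathcal{O}_2\otimes A$ to $A$. Homogeneity gives $\mathcal{O}_2\otimes\E_s(C)=(\mathcal{O}_2\otimes A)_{\omega(s)}=\mathcal{O}_2\otimes A_{\omega(s)}$, hence the splitting above equals $\overline{\rm span}\{(\mathcal{O}_2\otimes A)_{\omega(s)}s\}=\mathcal{O}_2\otimes\Cso(L,\Lambda,\omega)$; since $\mathcal{O}_2$ is simple, the \Cs-tensor-splitting theorem \cite{Zac},\cite{Zsi} makes $C\mapsto\mathcal{O}_2\otimes C$ a lattice isomorphism, so cancelling the $\mathcal{O}_2$ yields $C=\Cso(L,\Lambda,\omega)$ and hence surjectivity. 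Note that the reconstruction ``$C$ equals the closed span of its coefficients'' is supplied already by Theorem~\ref{Thmint:Cs} upstairs (the inclusion $\supseteq$ being the free one from Lemma~\ref{Lem:cf}), so no extra argument is needed downstairs. Finally, order-reflection of the bijection follows formally from the inclusion criterion for the subalgebras $\Cso(L,\Lambda,\omega)$, matching it with the partial order $\preceq$ on $\cL(K,\Gamma,\sigma)$, which completes the lattice isomorphism.
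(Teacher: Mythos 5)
Your proposal has a genuine gap at its load-bearing step: the approximation property. Theorem \ref{Thm:Galois} is stated for an arbitrary countable discrete group $\Gamma$ (the standing assumption of Section \ref{section:Galois}), with no AP hypothesis, but your engine is Theorem \ref{Thmint:Cs}, and your final reconstruction (``$C$ equals the closed span of its coefficients \ldots supplied already by Theorem \ref{Thmint:Cs} upstairs'') is exactly the place where Lemma \ref{Lem:AP} is consumed; that coefficient characterization requires the AP of $\Lambda$ and genuinely fails for non-exact groups (Proposition 2.7 of \cite{SuzCMP}). The paper is careful to use only the AP-free part of the machinery: Lemmas \ref{Lem:cf} and \ref{Lem:unitary}, via the claim isolated inside the proof of Theorem \ref{Thm:generalCs}, produce for each \emph{individual} $s\in\Lambda$ a unitary $\fu_s$ with $\E_s(C)s=A^L\fu_s s=C\cap As$, and the decisive upper bound $C\subset\Cso(L,\Lambda,\omega)$ is then obtained not from the AP but from the bespoke conditional expectation $\E_{\mathfrak l}$ of Lemma \ref{Lem:exp}, built by averaging over the compact group $L^{\rm ab}$ with the $\omega$-twisted unitaries $\fU_g$ on a regular representation. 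This compact-averaging device is precisely what replaces Lemma \ref{Lem:AP} in this setting (the paper notes one may skip it when $\Gamma$ has the AP). As written, your argument proves the theorem only for $\Gamma$ with the AP, which is strictly weaker than the statement; to close the gap you must either construct $\E_{\mathfrak l}$ or find another AP-free route to $C\subset\Cso(L,\Lambda,\omega)$.

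A second, smaller deficiency: you correctly identify the homogeneity step ($\fu_s$ lands in a single eigenspace $\cM(A)_{\omega(s)}$, and $\sigma_s(L)=L$) as the crux, but you only gesture at it through an unproven classification of invertible $A^L$-sub-bimodules. The paper's Lemma \ref{Lem:conj} settles it in a few lines: from $\fu_s A^{\sigma_s(L)}\fu_s^\ast=A^L$ one gets $\fu_s A^K\fu_s^\ast\subset A^L$, hence for $g\in L$ and $x\in A^K$ one has $\fu_s x\fu_s^\ast=\alpha_g(\fu_s)x\alpha_g(\fu_s)^\ast$, so irreducibility gives $\alpha_g(\fu_s)^\ast\fu_s\in(A^K)'\cap\cM(A)=\IC 1$, which simultaneously defines $\omega(s)$ and forces $\sigma_s(L)=L$. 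Relatedly, your injectivity sketch silently assumes that $\omega(s)$ is recoverable from $\E_s(C)=A_{\omega(s)}$; this needs the nonvanishing of all eigenspaces $A_\chi$ for $\chi\in\widehat{L^{\rm ab}}$ (Lemma \ref{Lem:ev}, itself proved by a Theorem \ref{Thm:generalCs}-type argument plus the Galois correspondence of \cite{Mu}), which your proposal never addresses. Your remaining architecture --- Mukohara for the compact direction, per-coefficient unitaries for the discrete direction, and cancelling the $\mathcal{O}_2$ via the tensor splitting theorem --- does match the paper's, so the proof is repairable, but these two points, above all the AP one, must be filled in.
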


We divide the proof into a few lemmas.
The inverse map is obtained in the proof.

For a \Cs-algebra $A$,
let $A_\infty:=\ell^\infty(\IN, A)/c_0(\IN, A)$.
For a continuous group action $\alpha \colon G \acts A$,
let $\alpha_\infty \colon G \acts A_\infty$ denote the induced (possibly discontinuous) action.
Denote by $A_{\infty, \alpha}$ the
set of all $G$-continuous elements with respect to $\alpha_\infty$.
\begin{Lem}\label{Lem:isa}
Let $\alpha \colon K \rtimes_\sigma \Gamma \acts A$ be an isometrically shift-absorbing action on a separable \Cs-algebra $A$.
Then $A^K \subset (A, \alpha|_\Gamma)$ is centrally $\Gamma$-free.
\end{Lem}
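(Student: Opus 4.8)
The plan is to check the defining condition of central freeness (Definition~\ref{Def:cf}) for each automorphism $\alpha_s$, $s\in\Gamma\setminus\{e\}$, of the inclusion $A^K\subset A$. First I would note that this is meaningful: the semidirect product relation $\alpha_s\circ\alpha_g=\alpha_{\sigma_s(g)}\circ\alpha_s$ for $g\in K$ shows that $\alpha_s$ normalizes the $K$-action, so $\alpha_s(A^K)=A^K$. Next I would reduce the $\ell^2(\IN,A^K)$-formulation to the construction of a single isometry: given $\epsilon>0$ and $a,b\in A$, it suffices to find $v\in A^K$ with $v^\ast v\approx_\epsilon 1$, $[v,a]\approx_\epsilon 0$, $[v,b]\approx_\epsilon 0$, and $v^\ast\alpha_s(v)\approx_\epsilon 0$. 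Indeed, then $\xi:=(v,0,0,\ldots)\in(\ell^2(\IN,A^K))_1$ satisfies $\sum_n\xi_n^\ast a\xi_n=v^\ast a v\approx a$ (using $v^\ast v\approx 1$ and centrality) and $\sum_n\xi_n^\ast b\,\alpha_s(\xi_n)=v^\ast b\,\alpha_s(v)=b\,v^\ast\alpha_s(v)+[v^\ast,b]\alpha_s(v)\approx 0$. Thus the whole problem becomes the production of $K$-fixed, $A$-central, approximately isometric elements that $\alpha_s$ carries to an orthogonal range.

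The existence of such elements is precisely what isometric shift-absorption supplies, but I must secure them inside the fixed-point algebra. I would therefore pass to the $G$-continuous sequence algebra $A_{\infty,\alpha}$ and its relative commutant $A'\cap A_{\infty,\alpha}$, on which $G=K\rtimes_\sigma\Gamma$ acts. By the characterization of the isometrically shift-absorbing property in \cite{GS2}, $\alpha$ equivariantly absorbs the model shift action; on the level of the sequence algebra this provides a unital $G$-equivariant copy, inside $A'\cap A_{\infty,\alpha}$, of the Cuntz algebra of the $G$-Hilbert space $L^2(G)$ carrying the left regular representation, with generating isometries $t_\eta$ ($\eta\in L^2(G)$) satisfying
\[
t_\eta^\ast\,\alpha_{g,\infty}(t_\eta)=\langle\lambda_g\eta,\eta\rangle\,1\qquad(g\in G).
\]
I would then exhibit the required element already in this model. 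Take $\eta$ to be the Haar-normalized indicator of $K\times\{e\}\subset K\rtimes_\sigma\Gamma$. This $\eta$ is left $K$-invariant, so $v:=t_\eta$ is fixed by $K$; and $\lambda_{(e,s)}\eta$ is supported on $K\times\{s\}$, whence $\langle\lambda_{(e,s)}\eta,\eta\rangle=0$ for every $s\in\Gamma\setminus\{e\}$. Hence $v\in A'\cap A_{\infty,\alpha}$ is a $K$-fixed central isometry with $v^\ast\alpha_{s,\infty}(v)=0$ for all $s\in\Gamma\setminus\{e\}$ simultaneously.

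It remains to descend $v$ to $A$. Being $K$-fixed and $G$-continuous, $v$ lifts to the fixed-point algebra: averaging any representing sequence $(v^{(n)})_n\subset A$ over the Haar probability measure $m_K$ yields the $K$-invariant elements $\int_K\alpha_g(v^{(n)})\,\mathrm{d}m_K(g)\in A^K$, which represent the same class $v$ because that class is already $K$-fixed (here compactness of $K$ and the $K$-continuity of $\alpha_\infty$ on $A_{\infty,\alpha}$ are used). A reindexation then produces, for each $\epsilon$ and each finite set $\{a,b\}$, a genuine $w\in A^K$ satisfying the four approximate relations above, which completes the verification.

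The main obstacle, and the reason the semidirect product structure is indispensable, is to make the shift isometry simultaneously $K$-fixed and $A$-central rather than merely $G$-continuous: a naive average of a non-invariant central isometry collapses its norm and is useless. The resolution exploits that the shift underlying isometric shift-absorption lives in the $L^2(G)$ direction, where $K$-invariant unit vectors (those constant along $K$-cosets) are still genuinely permuted by $\Gamma$. I expect the technical care to be concentrated in (i) invoking the precise form of the absorption statement of \cite{GS2} that furnishes a $G$-equivariant copy of the shift model inside $A'\cap A_{\infty,\alpha}$, and (ii) checking that $K$-fixedness in $A_{\infty,\alpha}$ descends to invariant sequences in $A^K$; once these are in hand the remaining estimates are routine.
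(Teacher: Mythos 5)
Your proposal is correct and is essentially the paper's own argument: it rests on the same Gabe--Szab\'o characterization of isometric shift-absorption in the continuous sequence algebra, the same test vector $\chi_{K\times\{e\}}\in L^2(K\rtimes_\sigma\Gamma)$ (left $K$-invariant and orthogonal to all of its translates by $\Gamma\setminus\{e\}$), and the same descent step $(A_{\infty,\alpha})^K=(A^K)_\infty$ via Haar averaging of equicontinuous representing sequences (\cite{Bro}). The only cosmetic difference is that the paper invokes the $A$-bilinear formulation $\theta\colon L^2(K\rtimes_\sigma\Gamma,A)\rightarrow A_{\infty,\alpha}$ of \cite{GS2}, Proposition 3.8\,(iv), applied to $\chi_{K\times\{e\}}\otimes e_m$ for an approximate unit $(e_m)$ of $A^K$, which gives $v_m^\ast a v_m=e_m a e_m$ directly and thereby sidesteps both the centrality requirement and the unit tacitly assumed in your condition $v^\ast v\approx_\epsilon 1$ (a point that would otherwise require the quotient central-sequence algebra formulation when $A$ is non-unital).
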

\begin{proof}
By a characterization of the isometrically shift-absorption (\cite{GS2}, Proposition 3.8 (iv)), we have a ($K\rtimes_\sigma \Gamma$)-equivariant $A$-bilinear map
\[\theta \colon (L^2(K\rtimes_\sigma \Gamma, A), \widetilde{\alpha}) \rightarrow A_{\infty, \alpha}\]
with
\[\theta (\xi)^\ast \theta (\eta)=\ip{\xi, \eta}_A.\]
Since $\theta $ is ($K\rtimes_\sigma \Gamma$)-equivariant, it restricts to the $\Gamma$-equivariant $A^K$-bilinear map
\[\ell^2(\Gamma, A^\Gamma) \rightarrow (A_{\infty, \alpha})^K.\]
Here we identify $\ell^2(\Gamma)$ with the $K$-fixed point space $L^2(K\rtimes_\sigma \Gamma)^K$ in the obvious way.
Observe that $(A_{\infty, \alpha})^K=(A^K)_\infty$.
Indeed any representing sequence of an element in $A_{\infty, \alpha}$
is equicontinuous with respect to the $K$-action (by \cite{Bro}).
Thus, by applying $\E^K$ termwise, we obtain
$(A_{\infty, \alpha})^K=(A^K)_\infty$.

We pick an approximate unit $(e_n)_{n\in \IN}$ of $A^K \subset A$.
By the choice of $\theta$, for $m\in \IN$, the element $v_m:=\theta(\chi_{K\times\{e\}}\otimes e_m )\in (A^K)_\infty$ satisfies
\[v_m^\ast a v_m= e_m a e_m, \quad v_m^\ast a \alpha_s(v_m)=0\quad{\rm~for~all~}a\in A, s\in \Gamma \setminus\{e\}.\]
For each $m\in \IN$, we choose a representing sequence $(x^{(m)}_n)_{n\in \IN}\subset (A^K)_1$ of $v_m$.
Then, by the above conditions,
for any $a, b\in A$, any $s\in \Gamma \setminus \{e\}$, and any $\epsilon>0$,
one has
\[(x^{(m)}_n)^\ast a x^{(m)}_n \approx_\epsilon a, \quad (x^{(m)}_n)^\ast b \alpha_s(x^{(m)}_n)\approx_{\epsilon} 0\]
for suitable choices of $n, m\in \IN$.
This confirms the central $\Gamma$-freeness of $A^K\subset (A, \alpha|_\Gamma)$.
\end{proof}
The next lemma is necessary for the injectivity of our map
\[\cL(K, \Gamma, \sigma) \rightarrow \Int{A^K}{A\rca{\alpha} \Gamma}.\]
\begin{Lem}\label{Lem:ev}
Let $K \acts A$ be an action on a \Cs-algebra such that the inclusion $A^K\subset A$
is \Cs-irreducible and satisfies the Galois correspondence.
Then for any $L<K$ and any $\chi\in \widehat{L^{\rm ab}}$, one has $A_\chi \neq \0$.\end{Lem}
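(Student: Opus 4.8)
The plan is to reduce everything to the spectral theory of a compact \emph{abelian} group action. Put $N:=\ker(\chi)$, a closed normal subgroup of $L$ and hence a closed subgroup of $K$, and let $Q:=L/N$, a compact abelian group. Since $\chi$ is trivial on $N$, every element of $A_\chi$ lies in $A^N$, and the $L$-action restricts to $A^N$ and factors through $Q$. Writing $C:=A^N$ and letting $\bar{\chi}\in\widehat{Q}$ be the character induced by $\chi$, we have $A_\chi=C_{\bar{\chi}}$, the $\bar{\chi}$-eigenspace of the action $Q\acts C$, whose fixed point algebra is $C^Q=A^L$. As $A^K\subset A$ is \Cs-irreducible and $A^N, A^L\in\Int{A^K}{A}$, both $C$ and $C^Q$ are simple. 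Thus it suffices to show $C_{\bar{\chi}}\neq\0$, and in fact I will show that every eigenspace $C_\psi$ (for $\psi\in\widehat{Q}$) is nonzero.

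First I would analyse the support $S:=\{\psi\in\widehat{Q}:C_\psi\neq\0\}$. It contains the trivial character (as $C^Q\neq\0$) and is symmetric, since $C_{\psi^{-1}}=C_\psi^\ast$. The key point is that $S$ is closed under multiplication, and this is where simplicity of $C^Q$ enters. Suppose $\psi,\psi'\in S$ but $C_{\psi\psi'}=\0$; then $C_\psi C_{\psi'}\subset C_{\psi\psi'}=\0$. Using that $C_\psi$ is a $(C^Q,C^Q)$-bimodule and $C^Q C_{\psi'}\subset C_{\psi'}$, one computes $C_\psi\cdot(C^Q C_{\psi'}C_{\psi'}^\ast C^Q)=\0$. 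Now $C_{\psi'}C_{\psi'}^\ast$ generates a nonzero closed ideal of $C^Q$, which equals $C^Q$ by simplicity, so $C_\psi\cdot C^Q=\0$. Since $a^\ast a\in C_{\psi^{-1}}C_\psi\subset C^Q$ for $a\in C_\psi$, an approximate unit $(e_i)$ of $C^Q$ yields $a=\lim_i a e_i=0$; hence $C_\psi=\0$, a contradiction. Therefore $S$ is a subgroup of the discrete group $\widehat{Q}$.

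Next I would use the Galois correspondence to prove $S=\widehat{Q}$. Consider the annihilator $S^\perp:=\{q\in Q:\psi(q)=1\text{ for all }\psi\in S\}$, a closed subgroup of $Q$. For $q\in S^\perp$ and $a\in C_\psi$ with $\psi\in S$ we have $\alpha_q(a)=\psi(q)a=a$; since the eigenspaces span a dense subspace of $C$, the subgroup $S^\perp$ acts trivially on $C$. Lifting $S^\perp$ to the closed subgroup $M_0$ with $N<M_0<L$ and $M_0/N=S^\perp$, this reads $A^{M_0}=C^{S^\perp}=C=A^N$. By injectivity of the Galois correspondence $M\mapsto A^M$ on closed subgroups of $K$, we get $M_0=N$, i.e.\ $S^\perp=\{e\}$, whence $S=\widehat{Q}$ by Pontryagin duality (recall $\widehat{Q}$ is discrete and $S$ is a subgroup). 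In particular $A_\chi=C_{\bar{\chi}}\neq\0$.

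I expect the only genuine difficulty to be the multiplicativity of $S$ in the second step. The inclusion $C_\psi C_{\psi'}\subset C_{\psi\psi'}$ allows the product to collapse even when both factors are nonzero, so passing from $C_{\psi\psi'}=\0$ to $C_\psi=\0$ really requires simplicity of the fixed point algebra together with the bimodule manipulation above. The abelian reduction, the annihilator computation, and the appeal to the Galois correspondence are then routine once this spectral subgroup property is established.
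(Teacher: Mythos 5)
Your proof is correct, and while the overall skeleton matches the paper's, the pivotal step is handled by a genuinely different argument. Both proofs reduce to a compact abelian action (the paper passes to $L^{\rm ab}\acts A^{{\rm D}(L)}$; you reduce slightly further, to $Q=L/\ker(\chi)\acts A^{\ker(\chi)}$, which is equally valid and even a bit more economical), both then show that the support $S$ of the eigenspaces is a subgroup of the discrete dual, and both finish identically: density of $\overline{\rm span}\{C_\psi\}$ in $C$, injectivity of the Galois correspondence to kill the annihilator, and Pontryagin duality. The difference is how the subgroup property of $S$ is obtained. The paper tensors with $\mathcal{O}_2$ and runs the argument of Theorem \ref{Thm:generalCs}: after $\mathcal{O}_2$-stabilization, the strict closure of every nonzero eigenspace contains a unitary of $\cM(A)$ (via pure infiniteness, $K_0$-triviality, and Cuntz's results), and multiplicativity of $S$ is then immediate from multiplying unitaries. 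You instead prove multiplicativity by a purely algebraic bimodule computation: if $C_{\psi\psi'}=\0$ then $C_\psi$ annihilates the closed ideal of $C^Q$ generated by $C_{\psi'}C_{\psi'}^\ast$, which equals $C^Q=A^L$ by simplicity (available since $A^L\in\Int{A^K}{A}$ and the inclusion is \Cs-irreducible), and then $a^\ast a\in C^Q$ for $a\in C_\psi$ together with an approximate unit of $C^Q$ forces $C_\psi=\0$. Your route is more elementary: it needs no $\mathcal{O}_2$-absorption, no pure infiniteness, and no projection/unitary machinery, and it isolates exactly the hypothesis being used, namely simplicity of the fixed-point algebra; it would apply verbatim to any compact abelian action whose fixed-point algebra is simple. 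What the paper's heavier route buys is coherence with the rest of the article — the unitaries in the strict closures of eigenspaces are precisely the kind of data exploited in Lemma \ref{Lem:conj} and the proof of Theorem \ref{Thm:Galois} — but for this lemma in isolation your argument is arguably cleaner.
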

\begin{proof}
Note that the action $K \acts A$ induces an action $L^{\rm ab} \acts A^{ {\rm D}(L)}$,
which still satisfies the assumptions.
Hence it suffices to show the statement when $K=L^{\rm ab}$.
By taking the tensor product with $\mathcal{O}_2$ if necessary, we may assume that $\alpha$ is of the form $\id_{\mathcal{O}_2}\otimes\alpha_0$.
Then, for each $\tau \in \widehat{K}$,
one has either $A_\tau=\{0\}$ or the strict closure of $A_\tau$ contains an element in $\cM(A)^{\rm u}$
by a similar argument to the proof of Theorem \ref{Thm:generalCs}.
This shows that the set
\[\Upsilon:=\{\tau \in \widehat{K}: A_\tau\neq \{0\}\}\]
forms a subgroup of $\widehat{K}$.
It suffices to show that $\Upsilon=\widehat{K}$.
Set $H:=\bigcap_{\tau \in \Upsilon} \ker(\tau).$
Since $A=\overline{{\rm span}}\{A_\tau: \tau \in \Upsilon\}$ (see e.g., Section 8.1 in \cite{Pedbook}),
one has
$A^{H} = A$.
By (the injectivity of) the Galois correspondence, one has $H=\{1\}$.
Hence $\Upsilon=\widehat{K}$.
\end{proof}

As discussed below Lemma \ref{Lem:AP}, for \Cs-algebras, when the acting group $\Gamma$ does not have the AP,
in general it is difficult to determine which elements in the crossed product sit in a given twisted crossed product \Cs-subalgebra.
Fortunately in the present situation, thanks to the next lemma,
this problem does not appear.
(When $\Gamma$ has the AP, one can skip this lemma by Lemma \ref{Lem:AP}.)

\begin{Lem}\label{Lem:exp}
Let $\alpha \colon K \rtimes_\sigma \Gamma \acts A$ be an action on a \Cs-algebra.
Let $\mathfrak{l}=(L, \Lambda, \omega) \in \cL(K, \Gamma, \sigma)$.
Then there is a conditional expectation
\[\E_\mathfrak{l} \colon A \rca{\alpha} \Gamma \rightarrow \Cso(L, \Lambda, \omega)\]
satisfying 
\[\E_\mathfrak{l}(as) = \left\{
\begin{array}{ll} \E^{L}_{\omega(s)}(a)s &{\rm ~when~} s\in \Lambda\\
0 & {\rm~otherwise}.
\end{array}
\right.\]
Thus, $x\in A \rca{\alpha} \Gamma$ sits in $\Cso(L, \Lambda, \omega)$ if and only if it satisfies
\[\E_s(x)\in A_{\omega(s)}\quad {\rm~for~}s\in \Lambda \quad{\rm~and~}\quad \E_s(x)=0 \quad{\rm~for~}s\in \Gamma \setminus \Lambda.\]
\end{Lem}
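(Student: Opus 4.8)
The plan is to obtain the stated characterization as a formal consequence of the existence of $\E_\mathfrak{l}$, so the real content is the construction of this conditional expectation. First I would factor $\E_\mathfrak{l} = E_H \circ P_\Lambda$, where $P_\Lambda \colon A \rca{\alpha} \Gamma \to A \rca{\alpha} \Lambda$ is the canonical conditional expectation attached to the subgroup $\Lambda < \Gamma$. This map exists unconditionally for reduced crossed products (it is the compression by $\ell^2(\Lambda) \subset \ell^2(\Gamma)$) and annihilates all Fourier coefficients outside $\Lambda$. This reduces matters to producing a conditional expectation $E_H$ from $A \rca{\alpha}\Lambda$ onto $\Cso(L,\Lambda,\omega)$ that implements, on each retained coefficient, the eigenspace projection $\E^L_{\omega(s)}$.

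The key observation is that $A \rca{\alpha}\Lambda$ carries a grading finer than its tautological $\Lambda$-grading. Since $L$ is $\sigma(\Lambda)$-invariant one has $\alpha_s(A_\chi) = A_{\sigma^\ast_s(\chi)}$ for $s \in \Lambda$, and together with $A = \overline{\rm span}\{A_\chi : \chi \in \widehat{L^{\rm ab}}\}$ this shows that the subspaces $\mathcal B_{(\chi,s)} := A_\chi s$ are the fibres of a Fell bundle over the semidirect product $\Sigma := \widehat{L^{\rm ab}} \rtimes_{\sigma^\ast} \Lambda$. The multiplicative axiom $\mathcal B_{(\chi,s)}\mathcal B_{(\chi',s')} \subset \mathcal B_{(\chi\sigma^\ast_s(\chi'),\, ss')}$ is precisely the eigenspace computation $A_\chi \alpha_s(A_{\chi'}) \subset A_{\chi\sigma^\ast_s(\chi')}$, the involutive axiom follows from $(A_\chi)^\ast = A_{\chi^{-1}}$, and the faithful conditional expectation onto the unit fibre $\mathcal B_e = A^L$ is $\E^L \circ \E$. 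The $1$-cocycle identity for $\omega$ says exactly that $H := \{(\omega(s),s) : s \in \Lambda\}$ is a subgroup of $\Sigma$ (a complement to the normal copy of $\widehat{L^{\rm ab}}$), and by construction $\Cso(L,\Lambda,\omega)$ is the closed span of the fibres indexed by $H$, i.e. the cross-sectional algebra of the restricted bundle $\mathcal B|_H$.

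With this picture in place, $E_H$ is the canonical conditional expectation of a reduced cross-sectional algebra onto the subbundle algebra over a subgroup, obtained by compressing the regular representation of $\mathcal B$ to the $H$-indexed coordinates (see \cite{Exe}). The point I want to stress is that this step requires no approximation property of $\Lambda$: it is the Fell-bundle analogue of $P_\Lambda$, and this is exactly where the compactness of $K$ takes over the role that the AP plays in Lemma \ref{Lem:AP}. Reading off the formula gives $E_H(a s) = \E^L_{\omega(s)}(a)\,s$ for $s \in \Lambda$, so $\E_\mathfrak{l} = E_H \circ P_\Lambda$ has the asserted values. The final characterization then follows formally: $\E_s \circ \E_\mathfrak{l}$ equals $\E^L_{\omega(s)} \circ \E_s$ for $s \in \Lambda$ and $0$ for $s \notin \Lambda$, so by injectivity of $x \mapsto (\E_s(x))_{s\in\Gamma}$ on the reduced crossed product, $x \in \Cso(L,\Lambda,\omega)$ if and only if $\E_\mathfrak{l}(x)=x$, if and only if each $\E_s(x)$ is fixed by $\E^L_{\omega(s)}$ (equivalently lies in $A_{\omega(s)}$) for $s \in \Lambda$ and vanishes otherwise.

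The step I expect to be the main obstacle is the rigorous identification of $A \rca{\alpha}\Lambda$ with the reduced cross-sectional algebra $\rg(\mathcal B)$ of the $\Sigma$-bundle, which is what legitimises the subgroup conditional expectation $E_H$; a priori the refined grading could be merely topological rather than reduced. I would argue reducedness by noting that the normal direction $\widehat{L^{\rm ab}}$ is a discrete abelian, hence amenable, group, so that averaging over the compact group $L$ supplies a faithful expectation in that direction, while the $\Lambda$-direction is reduced by construction of $A \rca{\alpha}\Lambda$; the care required is in checking that these two features combine to yield reducedness over the possibly non-amenable group $\Sigma$.
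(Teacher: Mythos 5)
Your construction breaks down at its foundation: the density claim $A=\overline{\rm span}\{A_\chi:\chi\in\widehat{L^{\rm ab}}\}$ is false whenever $L$ is non-abelian. Every character of $L$ kills the closed commutator subgroup, so $A_\chi\subset A^{\ker(\chi)}\subset A^{{\rm D}(L)}$ for all $\chi\in\widehat{L^{\rm ab}}$ (as the paper itself notes in Section \ref{section:Galois}), and the closed span of the $A_\chi$ is exactly $A^{{\rm D}(L)}$, not $A$; the rest of $A$ lives in the isotypic components of the non-one-dimensional irreducible representations of $L$, which your bundle never sees. Consequently the family $\mathcal B_{(\chi,s)}=A_\chi s$ is a Fell bundle over $\Sigma=\widehat{L^{\rm ab}}\rtimes_{\sigma^\ast}\Lambda$ whose cross-sectional algebra is only $A^{{\rm D}(L)}\rca{\alpha}\Lambda$. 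In particular $A\rca{\alpha}\Lambda$ is \emph{not} graded over $\Sigma$, a general element of $A\rca{\alpha}\Lambda$ has no ``$H$-indexed coordinates'' to compress onto, and the proposed factorization $\E_{\mathfrak l}=E_H\circ P_\Lambda$ does not typecheck: the natural domain of $E_H$ is $A^{{\rm D}(L)}\rca{\alpha}\Lambda$, while the range of $P_\Lambda$ is all of $A\rca{\alpha}\Lambda$.

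The gap is exactly the middle step of the paper's proof that your two-step factorization omits: between $P_\Lambda=\E^\Gamma_\Lambda$ and the eigenspace projection one needs the conditional expectation $\widetilde{\E}^{{\rm D}(L)}\colon A\rca{\alpha}\Lambda\rightarrow A^{{\rm D}(L)}\rca{\alpha}\Lambda$, $as\mapsto \E^{{\rm D}(L)}(a)s$, which is well defined because $\E^{{\rm D}(L)}$ is $\Lambda$-equivariant (${\rm D}(L)$ is characteristic in $L$ and $L$ is $\sigma(\Lambda)$-invariant); since $\E^{L}_{\omega(s)}=\E^{L^{\rm ab}}_{\omega(s)}\circ\E^{{\rm D}(L)}$, inserting it does not disturb the asserted formula. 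Once patched, your Fell-bundle step does work on $A^{{\rm D}(L)}\rca{\alpha}\Lambda$, and your worry about reducedness has a clean resolution requiring no amenability of any group: the map $\E^{L^{\rm ab}}\circ\E$ is a faithful conditional expectation onto the unit fibre $A^L$ annihilating every other fibre, and a topologically graded \Cs-algebra whose canonical expectation onto the unit fibre is faithful is canonically the reduced cross-sectional algebra of its bundle (Exel); restriction to the subgroup $H=\{(\omega(s),s):s\in\Lambda\}$ then supplies $E_H$. This patched route is genuinely different from the paper's final step, which avoids Fell-bundle machinery altogether: the paper constructs the last expectation concretely as the Bochner integral $\int_{L^{\rm ab}}\ad(\fU_g)\,{\rm d}m_{L^{\rm ab}}(g)$ of explicit unitaries $\fU_g$ twisting the regular representation of $A^{{\rm D}(L)}\rca{\alpha}\Lambda$ by the scalars $\overline{\omega(s)_{\sigma_s(g)}}$, which produces the coefficientwise projections $\E^{L^{\rm ab}}_{\omega(s)}$ while manifestly staying inside the reduced crossed product. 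Your closing derivation of the characterization from the existence and formula of $\E_{\mathfrak l}$ is fine and agrees with the paper's.
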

\begin{proof}
We have the subgroup conditional expectation
\[\E^\Gamma_\Lambda \colon A \rca{\alpha} \Gamma \rightarrow A \rca{\alpha}\Lambda\]
given by $\E^\Gamma_\Lambda(as)=\chi_\Lambda(s)as$
and the conditional expectation
\[\widetilde{\E}^{{\rm D}(L)} \colon A \rca{\alpha}\Lambda \rightarrow A^{ {\rm D}(L)} \rca{\alpha}\Lambda\]
given by $\widetilde{\E}^{ {\rm D}(L)}(as):=\E^{{\rm D}(L)}(a)s$
(which is well-defined since
$\E^{ {\rm D}(L)} \colon A \rightarrow A^{ {\rm D}(L)}$ is $\Lambda$-equivariant).
Hence we only need to construct the desired conditional expectation on $A^{{\rm D}(L)}\rca{\alpha} \Lambda$.
(Indeed the composition with $\widetilde{\E}^{{\rm D}(L)} \circ \E^\Gamma_\Lambda$ then gives the desired map.)

To give the desired map, we take a faithful covariant representation $(\pi, \fv)$ of the inherited action $L^{\rm ab} \acts A^{{\rm D}(L)}$ on a Hilbert space $\fH$.
Let $\Pi \colon A^{{\rm D}(L)}\rca{\alpha} \Lambda \rightarrow \IB(\fH\otimes \ell^2(\Lambda))$ be the regular representation defined via $\pi$:
\[\Pi(a)(\xi \otimes \delta_t):=\pi(\alpha_{t}^{-1}(a))\xi \otimes \delta_t, \quad \Pi(s)(\xi \otimes \delta_t):=\xi\otimes \delta_{st}\]
for $a\in A$, $s, t\in \Lambda$, $\xi \in \fH$.
For each $g\in L^{\rm ab}$, we define a unitary operator $\mathfrak{U}_g$ on $\fH\otimes \ell^2(\Lambda)$
by
\[\fU_g(\xi \otimes \delta_s):=\overline{\omega(s)_{\sigma_s(g)}} \cdot \fv_g(\xi)\otimes \delta_s.\]
Note that the map $g\mapsto \fU_g$ is strongly continuous.
For $a\in A$, $s, t\in \Lambda$, $\xi \in \fH$, and $g\in L^{\rm ab}$, direct calculations show that
\begin{align*}
\fU_g\Pi(as)\fU_g^\ast (\xi \otimes \delta_t)&=\left[\overline{\omega(s)_{\sigma_{st}(g)}} \cdot \pi(\alpha_{st}^{-1}(\alpha_{\sigma_{st}(g)}(a)))\xi \right]\otimes \delta_{st}\\
&=\Pi(\overline{\omega(s)_{\sigma_{st}(g)}}\cdot \alpha_{\sigma_{st}(g)}(a)s)(\xi \otimes \delta_t).
\end{align*}
Here we use the equation
\[\overline{\omega(st)_{\sigma_{st}(g)}} \cdot \omega(t)_{\sigma_{t}(g)}=\overline{\omega(s)_{\sigma_{st}(g)}},\]
which follows from the $1$-cocycle equation of $\omega$.
Observe that
\[\int_{L^{\rm ab}} \overline{\omega(s)_{\sigma_{st}(g)}}\cdot \alpha_{\sigma_{st}(g)}(a) {\rm d}m_{L^{\rm ab}}(g) = \E^{L^{\rm ab}}_{\omega(s)}(a)\]
for all $s, t\in \Lambda$ and $a\in A^{{\rm D}(L)}$,
where the Bochner integral is taken in the norm topology.
Thus, after identifying $A^{{\rm D}(L)}\rca{\alpha} \Lambda$
with a \Cs-subalgebra of $\IB(\fH\otimes \ell^2(\Lambda))$ via $\Pi$,
the desired map is obtained
as the co-restriction of the Bochner integral
\[\int_{L^{\rm ab}}\ad(\fU_g) {\rm d}m_{L^{\rm ab}}(g)\]
in $\IB(A^{{\rm D}(L)}\rca{\alpha} \Lambda ,\IB(\fH\otimes \ell^2(\Lambda)))$
with respect to the point strong operator topology.

For the last statement, if $x\in A \rca{\alpha} \Gamma$ satisfies the conditions,
then one has $\E_s(\E_\mathfrak{l}(x))=\E_s(x)$ for all $s\in \Gamma$,
whence $x=\E_\mathfrak{l}(x)\in \Cso(L, \Lambda, \omega)$. The converse is trivial.
\end{proof}

The next lemma excludes the existence of intermediate \Cs-algebras not covered by $\cL(K, \Gamma, \sigma)$.
\begin{Lem}\label{Lem:conj}
Let $\alpha \colon K \acts A$ be an action.
Assume that $A^K \subset A$ is irreducible and satisfies the Galois correspondence.
Assume that $u\in \cM(A)^{\rm u}$ and $L, M<K$ satisfy
\[uA^L u^\ast = A^M.\]
Then $L=M$ and $u\in \cM(A)_\chi$ for $\chi\in \widehat{L^{\rm ab}}$ given by $\chi_g:=u^\ast \alpha_g(u)$ for $g\in L$.
\end{Lem}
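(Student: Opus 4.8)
The plan is to prove the eigenoperator conclusion first, namely that $u\in\cM(A)_\chi$ for the character $\chi_g=u^\ast\alpha_g(u)$, and then to read off $L=M$ almost for free. Throughout I would use the triviality of the relative commutant of the \emph{smallest} fixed-point algebra: irreducibility of $A^K\subset A$ gives $(A^K)'\cap\cM(A)=\cZ(\cM(A))$, which equals $\IC 1$ because $A$ is simple in the situation at hand. This single fact is the engine of the whole argument.

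The main step is to extract the scalar, and here the decisive trick is to test the covariance relation against $A^K$ rather than against $A^L$ or $A^M$. I would rewrite the hypothesis as $A^L=u^\ast A^M u$ and fix $g\in L$. For every $b\in A^M$ the element $u^\ast b u$ lies in $A^L$, hence is fixed by $\alpha_g$, giving $\alpha_g(u^\ast)\,\alpha_g(b)\,\alpha_g(u)=u^\ast b u$. Now I would restrict $b$ to $A^K\subset A^M$: since $g\in L\subset K$, one has $\alpha_g(b)=b$ for such $b$, so the relation collapses to $\alpha_g(u^\ast)\,b\,\alpha_g(u)=u^\ast b u$. Setting $v_g:=\alpha_g(u)u^\ast\in\cM(A)^{\rm u}$ and conjugating, this reads $v_g^\ast b v_g=b$ for all $b\in A^K$, i.e. $v_g\in (A^K)'\cap\cM(A)=\IC 1$. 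Thus $v_g=\chi_g 1$ for some $\chi_g\in\IT$, which is precisely $\alpha_g(u)=\chi_g u$, equivalently $u^\ast\alpha_g(u)=\chi_g 1$. A short computation from $\alpha_{gh}=\alpha_g\circ\alpha_h$ then shows $\chi_{gh}=\chi_g\chi_h$; since $\chi$ is $\IT$-valued and strictly continuous, it factors through $L^{\rm ab}$, so $\chi\in\widehat{L^{\rm ab}}$ and $u\in\cM(A)_\chi$. I expect this scalar extraction to be the only genuine obstacle; the point that makes it work is that testing against $A^K$ kills every $\alpha_g$ with $g\in L$ at once.

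Finally I would deduce $L=M$. Any eigenoperator $u\in\cM(A)_\chi$ with $\chi\in\widehat{L^{\rm ab}}$ normalizes $A^L$: for $a\in A^L$ and $g\in L$ one computes $\alpha_g(u a u^\ast)=\chi_g\, u\, a\, \overline{\chi_g}\, u^\ast=u a u^\ast$, so $uA^Lu^\ast\subseteq A^L$, and applying the same to $u^\ast\in\cM(A)_{\overline{\chi}}$ gives $uA^Lu^\ast=A^L$. Comparing with the hypothesis $uA^Lu^\ast=A^M$ yields $A^L=A^M$, whence $L=M$ by injectivity of the Galois correspondence. Once $u$ is recognized as a $\chi$-eigenoperator the identity $L=M$ is thus immediate, and the remaining character/continuity bookkeeping for $\chi$ is routine.
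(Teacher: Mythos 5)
Your proof is correct and is essentially the paper's own argument in mirror image: the paper conjugates $A^K$ into $A^M$ by $u$ and tests with $g\in M$ to get $\alpha_g(u)^\ast u\in (A^K)'\cap\cM(A)=\IC$, then deduces $u^\ast A^M u=A^M$, $A^L=A^M$, and $L=M$ via Galois injectivity, exactly as you do with the roles of $L$ and $M$ swapped. Your version lands directly on $\chi\in\widehat{L^{\rm ab}}$ as in the statement and spells out the multiplicativity/continuity bookkeeping the paper leaves implicit, but the key idea --- testing the covariance relation against $A^K$ to exploit irreducibility --- is identical.
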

\begin{proof}
The assumption implies
$uA^K u^\ast \subset A^M$.
Hence for any $x\in A^K$ and any $g\in M$, one has
\[u x u^\ast=\alpha_g(u x u^\ast )=\alpha_g(u)x\alpha_g(u)^\ast.\]
This shows that $\alpha_g(u)^\ast u\in (A^K)'\cap \cM(A)=\IC$.
Hence $u\in \cM(A)_\chi$ for $\chi\in \widehat{M^{\rm ab}}$ defined by $\chi_g:=u^\ast \alpha_g(u)$ for $g\in M$.
This proves $u^\ast A^M u = A^M$.
Since $u^\ast A^M u =A^L$, one has $A^L=A^M$ and hence the Galois correspondence yields $L=M$.
\end{proof}

Now we are able to complete the proof.
\begin{proof}[Proof of Theorem \ref{Thm:Galois}]
Thanks to Lemmas \ref{Lem:isa} and \ref{Lem:ev} and the Galois correspondence for $A^K \subset A$,
clearly the map in the statement is injective.
The relation $(L_1, \Lambda_1, \omega_1)\preceq (L_2, \Lambda_2, \omega_2)$ for two elements in $\cL(K, \Gamma, \sigma)$
implies $\Lambda_1<\Lambda_2$ and $A_{\omega_1(s)}\subset A_{\omega_2(s)}$ for all $s\in \Lambda_1$.
Hence $\Cso(L_1, \Lambda_1, \omega_1)\subset \Cso(L_2, \Lambda_2, \omega_2)$.
Conversely, if $\Cso(L_1, \Lambda_1, \omega_1)\subset \Cso(L_2, \Lambda_2, \omega_2)$,
one clearly has $A^{L_1} \subset A^{L_2}$ and $\Lambda_1 \subset \Lambda_2$ (by Lemma \ref{Lem:ev}).
By the Galois correspondence \cite{Mu}, the first relation implies $L_2<L_1$.
Also, for any $s\in \Lambda_1$, one has
\[A_{\omega_1(s)}=\E_s(\Cso(L_1, \Lambda_1, \omega_1))\subset \E_s(\Cso(L_2, \Lambda_2, \omega_2))
=A_{\omega_2(s)}.\]
This together with Lemma \ref{Lem:ev} implies $\omega_1(s)|_{L_2}=\omega_2(s)$ for $s\in \Lambda_1$.
This proves $(L_1, \Lambda_1, \omega_1)\preceq (L_2, \Lambda_2, \omega_2)$,
and hence the map is an order isomorphism onto the image.

For the surjectivity, it suffices to show the statement for $\id_{\mathcal{O}_2}\otimes \alpha$ instead of $\alpha$.
Hence we may assume that $\alpha$ is of the form $\id_{\mathcal{O}_2}\otimes\alpha_0$.

Let $C\in \Int{A^K}{A\rca{\alpha}\Gamma}$.
We need to show that $C=\Cso(L, \Lambda, \omega)$ for some $(L, \Lambda, \omega) \in \cL(K, \Gamma, \sigma)$.
We will explicitly give the desired triplet.

By the Galois correspondence for $A^K \subset A$ \cite{Mu}, the \Cs-algebra $A \cap C$ is of the form $A^L$ for a unique $L <K$.
Put $\Lambda:=\{s\in \Gamma: \E_s(C)\neq \0\}$.
By Theorem \ref{Thm:generalCs} and \ref{Lem:isa} (recall that $\alpha$ is of the form $\id_{\mathcal{O}_2}\otimes\alpha_0$),
for each $s\in \Lambda$, one can choose $\fu_s\in \cM(A)^{\rm u}$ with $\E_s(C)s=A^L \fu_s s= C\cap As$.
Observe that
\[A^L=\fu_ s s A^L (\fu_s s)^\ast=\fu_s A^{\sigma_s(L)} \fu_s^\ast.\]
By Lemma \ref{Lem:conj}, this implies $\sigma_s(L)=L$ for $s\in \Lambda$.
Moreover by Lemma \ref{Lem:conj},
one has a map $\omega \colon \Lambda \rightarrow \widehat{L^{\rm ab}}$
satisfying $\fu_s\in \cM(A)_{\omega(s)}$ for $s\in \Lambda$.
Since $A_{\omega(s)}=A^L \fu_s$, we conclude $\E_s(C)=A_{\omega(s)}$ for $s\in \Lambda$.
As
\[A_{\omega(s)} \alpha_s(A_{\omega(t)})st=A_{\omega(s)} s A_{\omega(t)} t \subset C \quad {\rm~ and~}\quad A_{\omega(s)} \alpha_s(A_{\omega(t)}) \subset A_{\omega(s)\sigma^\ast_s(\omega(t))}\]
for $s, t\in \Lambda$,
one has $\omega \in {\rm Z}^1(\Lambda, \widehat{L^{\rm ab}}; \sigma^\ast)$ by Lemma \ref{Lem:ev}.

Now it is clear that $(L, \Lambda, \omega) \in \cL(K, \Gamma, \sigma)$ and
that $\Cso(L, \Lambda, \omega) \subset C$.
The reverse inclusion follows from Lemma \ref{Lem:exp}.
\end{proof}

\begin{Rem}
In the case both $K$ and $\Gamma$ are finite,
the statement follows from the finite quantum group Galois corresponding theorem of Izumi \cite{Izu}.
To see this, apply the theorem to the obvious quantum group action of $K \rtimes \widehat{\Gamma}$ on $A \rc \Gamma$. 
\end{Rem}

\begin{Rem}
By a similar way, one can prove the analogous Galois's type result for von Neumann algebras.
However this also follows from the Galois correspondence for Kac algebras shown by Izumi--Longo--Popa \cite{ILP}.
To see this, apply their theorem to the obvious action of $K\rtimes_{\sigma}\widehat{\Gamma}$ on $M\vnc{\alpha}\Gamma$.
We therefore do not give details.
\end{Rem}
\subsection*{Acknowledgements}
The author is grateful to the organizers of \emph{The First International Congress of Basic Science} at Beijing (2023)
for warmful invitation to the conference and awarding \emph{Frontiers of Science Awards} to my paper \cite{SuzCMP}.
These stimulating experiences motivated me to revisit phenomena around this work.

The present work was supported by JSPS KAKENHI (Grant-in-Aid for Early-Career Scientists)
Grant Numbers JP19K14550, JP22K13924.

\subsection*{Conflict of interest}
 On behalf of all authors, the corresponding author states that there is no conflict of interest.

\end{document}